\documentclass[a4paper,11pt,reqno]{amsart}

\usepackage{amssymb}
\usepackage[all]{xy}

\usepackage{amsmath}
\usepackage{amsthm}    
\usepackage{mathrsfs}
\usepackage{xcolor}
\usepackage{enumitem}
\usepackage{subcaption}
\usepackage[final]{graphicx}
\usepackage{float}
\usepackage{epic}
\usepackage{setspace}

\usepackage[all]{xy}
\usepackage{color}
\usepackage{calligra}
\usepackage[T1]{fontenc}
\usepackage{verbatim}

\newcount\mycount
\numberwithin{equation}{section}

\numberwithin{equation}{section}
\usepackage[utf8]{inputenc}
\usepackage[english]{babel}
\usepackage{stmaryrd,mathtools,commath,mathrsfs}
\usepackage{bbm} 
\usepackage{amssymb,amsthm}
\numberwithin{equation}{section}
\usepackage{float}
\usepackage{graphicx}
\usepackage{yfonts,verbatim,calrsfs,enumitem}
\usepackage{comment}
\usepackage{todonotes}

\usepackage{amsmath}

\usepackage{xcolor}
\definecolor{cornellred}{rgb}{0.7,0.11,0.11}

\usepackage{cite}
\usepackage[colorlinks=false, allcolors=cornellred, pdfpagelabels]{hyperref}

\hypersetup{
    bookmarksdepth=2,
    bookmarksopen=true
}

\allowdisplaybreaks

\theoremstyle{plain}
\newtheorem{theorem}{Theorem}[section]
\newtheorem{lemma}[theorem]{Lemma}
\newtheorem{proposition}[theorem]{Proposition}
\newtheorem{corollary}[theorem]{Corollary}

\theoremstyle{definition}
\newtheorem{definition}[theorem]{Definition}
\newtheorem*{definition*}{Definition}

\newtheorem{remark}[theorem]{Remark}

\DeclareMathAlphabet{\mathcal}{OMS}{cmsy}{m}{n}

\newcommand{\Cs}{\ensuremath{\mathrm{C}^\ast}}

\newcommand{\id}{\ensuremath{\mathrm{id}}}
\renewcommand{\ker}{\ensuremath{\mathrm{ker}}}
\newcommand{\im}{\ensuremath{\mathrm{im}}}

\newcommand{\rank}{\ensuremath{\mathrm{rank}}}

\newcommand{\R}{\ensuremath{\mathbb{R}}}

\newcommand{\calU}{\ensuremath{\mathcal{U}}}

\providecommand{\cal}{}
\renewcommand{\cal}[1]{\ensuremath{\mathcal{#1}}}

\newcommand{\leb}{\ensuremath{\tau_{\mathrm{Leb}}}}
\newcommand{\tr}{\ensuremath{\mathrm{tr}}}

\usepackage{tikz}
\usetikzlibrary{arrows,matrix,positioning,automata}
\usepackage{quiver}
\usepackage{pgfplots}

\setlist[enumerate,itemize]{noitemsep} 

\hypersetup{
    bookmarksdepth=2,
    bookmarksopen=true
}

\begin{document}

\title[A new characterisation of selflessness]{A new characterisation of selflessness}

\author{Mario Klisse} \address{Department of Mathematics, Christian-Albrechts-Universität Kiel, Heinrich-Hecht-Platz  6, \newline 24118 Kiel, Germany} \curraddr{} \email{klisse@math.uni-kiel.de} \thanks{}

\author{Matteo Pagliero} \address{Department of Mathematics and Computer Science, University of Southern Denmark, Campusvej 55, 5230 Odense, Denmark} \curraddr{} \email{pagliero@imada.sdu.dk} \thanks{}

\subjclass[2020]{Primary 46L05;
Secondary 46L09, 46L54, 46M07, 46L55}

\keywords{Selfless \Cs-algebras, complete selflessness,
existential embeddings, reduced amalgamated free products,
ultrapowers, $q$-Gaussian \Cs-algebras,
free Araki--Woods \Cs-algebras, Hecke \Cs-algebras,
reduced crossed products}

\date{\today}

\dedicatory{}

\begin{abstract}
We characterise the existence of expectation-preserving embeddings of reduced amalgamated free product $\mathrm{C}^\ast$-algebras into norm ultrapowers of the first factor, fixing its diagonal copy pointwise, in terms of complementary compression relations on Hilbert space ultrapowers. This yields new characterisations of Robert's notion of selflessness, selfless inclusions, and relative selflessness with respect to conditional expectations. We also characterise Ozawa's notion of complete selflessness, including its extension to inclusions. We then illustrate the usefulness of these characterisations by establishing complete selflessness for suitable reduced (amalgamated) free products, free Araki--Woods $\mathrm{C}^\ast$-algebras, certain Hecke $\mathrm{C}^\ast$-algebras, and $q$-Gaussian $\mathrm{C}^\ast$-algebras. Finally, we obtain dynamical characterisations of (complete) $\mathrm{C}^\ast$-selflessness of discrete groups and establish relative selflessness for suitable reduced crossed products.
\end{abstract}

\maketitle

\tableofcontents

\section*{Introduction}
\renewcommand\thetheorem{\Alph{theorem}}


\subsection*{Context and motivation}
In free probability, pioneered by Voiculescu in \cite{Voi85} (see also \cite{VDN92,Voi95,Spe98}), a
\emph{\Cs-probability space} $(A,\rho)$ consists of a unital \Cs-algebra $A$ and a state $\rho$.
The concept generalises the description of classical probability in terms of an algebra of bounded random variables and its expectation, allowing the algebra to be non-commutative. An \emph{operator-valued \Cs-probability space} replaces
scalar-valued expectations by ones taking values in a prescribed \Cs-subalgebra. It consists of a unital inclusion $D\subseteq A$ of
\Cs-algebras together with a conditional expectation
$\mathbb E\colon A\to D$, that is, a unital completely
positive $D$-bimodule map restricting to the identity on $D$. Taking $D=\mathbb C1_A$ recovers the scalar-valued setting. Throughout this work, conditional expectations are assumed to be \emph{non-degenerate}, meaning that their associated GNS-representations are faithful.

Given two operator-valued \Cs-probability spaces $(A_1,\mathbb E_1)$ and $(A_2,\mathbb E_2)$ over the same unital \Cs-subalgebra $D$, their \emph{reduced amalgamated free product} $(A,\mathbb E)=(A_1,\mathbb E_1)\star_D(A_2,\mathbb E_2)$ combines the two algebras by identifying their copies of $D$ and making them freely independent relative to $\mathbb{E}$. More precisely, the \Cs-algebra $A$ is generated by embedded copies of $A_1$ and $A_2$, the expectation $\mathbb E\colon A\to D$ restricts to $\mathbb{E}|_{A_{1}}=\mathbb{E}_{1}$, $\mathbb{E}|_{A_{2}}=\mathbb{E}_{2}$, and $\mathbb{E}(a_{1}\cdots a_{n})=0$ for $a_{1}\in A_{i_{1}}^{\circ},\ldots,a_{n}\in A_{i_{n}}^{\circ}$ with $i_{1}\neq\cdots\neq i_{n}$. The latter vanishing condition for alternating products of centred elements expresses \emph{free independence with amalgamation over $D$}.

Free independence has long been an effective tool for studying
the structure of \Cs-algebras beyond the nuclear setting. An early example of this is Powers' proof that the reduced \Cs-algebra of the free group of rank two is simple and has a unique tracial state \cite{Powers75}, followed
by the work of Avitzour \cite{Avitzour82} and
Voiculescu \cite{Voi85} on reduced free products. More recently, free product methods have been employed to obtain strong regularity properties for non-nuclear \Cs-algebras, as we discuss below. This motivates the problem of realising reduced free product extensions inside norm ultrapowers of given \Cs-algebras, including those not presented in free product form.

\subsection*{The main theorem}

The present article is concerned with the approximate realisation of reduced amalgamated free products within one of their constituent algebras. Adjoining a free factor introduces elements with prescribed interactions with the original algebra. We ask when these interactions can already be approximated within the original algebra, respecting algebraic relations, conditional expectations, and norms of mixed expressions, while keeping the original elements fixed. In this context, the relevant notion is that of an \emph{existential embedding} of operator-valued \Cs-probability spaces. An embedding $(A,\mathbb E_A)\hookrightarrow (C,\mathbb E_C)$ of such spaces is called existential, if there exist an ultrafilter $\mathcal U$ and a unital $\ast$-embedding $\theta\colon C\hookrightarrow A^{\mathcal U}$ for which the diagonal copy of $A$ inside $A^\mathcal{U}$ is fixed, and for which $\mathbb E_A^{\mathcal U} \circ \theta=\Delta \circ \mathbb E_C$, where $A^{\mathcal U}$ denotes the norm ultrapower and $\Delta\colon A\hookrightarrow A^{\mathcal U}$ is the canonical diagonal embedding. Thus, existentiality of the \emph{first free factor embedding}
\[
    (A_1,\mathbb E_1)
    \hookrightarrow
    (A_1,\mathbb E_1)\star_D(A_2,\mathbb E_2)
\]
allows one to exploit free independence even when
$A_1$ is not presented as a free product.

A principal obstacle to constructing existential embeddings as above is controlling the reduced free product norm. Indeed, unital $\ast$-homomorphisms on the free factors that agree on $D$ determine a $\ast$-homomorphism on the algebraic amalgamated free product, but this map
need not continuously extend to its reduced
\Cs-completion. Our main theorem gives a necessary and sufficient criterion for existentiality of the first free factor embedding in terms of representations of the individual
factors and compression relations on complementary
Hilbert subspaces. It thereby provides a way to control mixed expressions through the geometry of these representations. Moreover, the theorem allows the image of the second factor to be constrained to $B^{\mathcal U}$ for a
prescribed unital \Cs-subalgebra $B\subseteq A_1$.

\begin{theorem} \label{MainTheorem} Let $D\subseteq A_{1},A_{2}$ be unital inclusions of \Cs-algebras admitting non-degenerate conditional expectations $\mathbb{E}_{1}\colon A_{1}\to D$ and $\mathbb{E}_{2}\colon A_{2}\to D$. Furthermore, let $B\subseteq A_{1}$ be another unital inclusion of \Cs-algebras. Then the following statements are equivalent:

\begin{enumerate}[label=\textup{(\roman*)},leftmargin=*]
\item There exist an ultrafilter $\mathcal{U}$ and a unital embedding  $\theta\colon(A_{1},\mathbb{E}_{1})\star_{D}(A_{2},\mathbb{E}_{2})\hookrightarrow A_{1}^{\mathcal{U}}$ such that
\[
\theta|_{A_{1}}=\Delta,\quad\mathbb{E}_{1}^{\mathcal{U}}\circ\theta=\Delta \circ(\mathbb{E}_{1}\star\mathbb{E}_{2}),\quad\theta(A_{2})\subseteq B^{\mathcal{U}}.
\]
\item There exist Hilbert spaces $\mathcal{H}$ and $\mathcal{K}$, an ultrafilter $\mathcal{U}$, unital $\ast$-embeddings $\pi_{1}\colon A_{1} \hookrightarrow \mathcal{B}(\mathcal{H})$, $\pi_{2}\colon A_{2}\hookrightarrow B^{\mathcal{U}}\subseteq A_{1}^{\mathcal{U}}$ and $\rho\colon D\hookrightarrow\mathcal{B}(\mathcal{K})$, an isometry $V\colon\mathcal{K}\to\mathcal{H}$, and a projection $p\in\mathcal{B}(\mathcal{H}^{\mathcal{U}})$ satisfying the following conditions:

\begin{enumerate}[leftmargin=*,label=\textup{(\alph*)}]
\item The embedding $\pi_{2}$ restricts to $\Delta|_{D}$ on $D$.
\item For every $a\in A_{1}$, the identity $V^{\ast}\pi_{1}(a)V=(\rho\circ\mathbb{E}_{1})(a)$ holds.
\item For every $a_{1}\in A_{1}$ and $a_{2}\in A_{2}$,
\begin{align*}
p\,\bigl((\pi_{1}^{\mathcal{U}}\circ\Delta)(a_{1})\bigr) \,p & =\bigl((\pi_{1}^{\mathcal{U}}\circ\Delta\circ\mathbb{E}_{1})(a_{1})\bigr)\,p,\\
p^\perp\,\bigl((\pi_{1}^{\mathcal{U}}\circ\pi_{2})(a_{2})\bigr)\, p^\perp & =\bigl((\pi_{1}^{\mathcal{U}}\circ\Delta\circ\mathbb{E}_{2})(a_{2})\bigr)\, p^\perp,\\
p\,\bigl((\pi_{1}^{\mathcal{U}}\circ\Delta)(a_{1})\bigr)\,V^{\mathcal{U}} & =0,
\end{align*}
where $V^{\mathcal{U}}\colon\mathcal{K}^{\mathcal{U}}\to\mathcal{H}^{\mathcal{U}}$ is the isometry induced by $V$.
\end{enumerate}
\end{enumerate}
Here $\Delta$ denotes in both cases the diagonal embedding $A_{1}\hookrightarrow  A_{1}^{\mathcal{U}}$.
\end{theorem}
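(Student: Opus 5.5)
We prove the two implications separately. The direction (i)$\Rightarrow$(ii) is essentially a GNS-type construction. The converse is the substantial part: it requires showing that the compression relations in (c) force the correct reduced free product norm.

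\emph{(i)$\Rightarrow$(ii).} Given $\theta$, put $\pi_2:=\theta|_{A_2}$. Then $\pi_2$ lands in $B^{\mathcal U}$, and it restricts to $\Delta|_D$ because $\theta|_{A_1}=\Delta$ and the two copies of $D$ are identified in the free product; this gives (a). Let $(A,\mathbb E)=(A_1,\mathbb E_1)\star_D(A_2,\mathbb E_2)$. Fix a faithful representation $\rho\colon D\to\mathcal B(\mathcal K)$ and form the induced (Stinespring/GNS) representation of $A_1$ on $\mathcal H:=L^2(A_1,\mathbb E_1)\otimes_\rho\mathcal K$. Here $V\colon\xi\mapsto 1\otimes\xi$ is an isometry with $V^*\pi_1(a)V=\rho(\mathbb E_1(a))$, which is (b). To obtain $\pi_1$ faithful we use non-degeneracy of $\mathbb E_1$. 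The projection $p$ should be taken as the compression onto the part of $\mathcal H^{\mathcal U}$ corresponding to words ending in a centred $A_2$-letter. Concretely, one compares $\mathcal H^{\mathcal U}$ with the free product Hilbert module $L^2(A,\mathbb E)\otimes_\rho\mathcal K$, embedded via $\theta$ and the expectation condition $\mathbb E_1^{\mathcal U}\circ\theta=\Delta\circ\mathbb E$. One then lets $p$ be the projection onto the closed span of vectors $\pi_1^{\mathcal U}(\theta(x))V^{\mathcal U}\eta$ with $x$ running over reduced words ending in $A_2^\circ$.

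The three relations in (c) then follow from freeness. Multiplying such a word on the left by $a_1\in A_1$ and compressing back by $p$ only sees $\mathbb E_1(a_1)$, since the $A_1^\circ$ part produces words starting with $A_1^\circ$. The complement $p^\perp$ is spanned by $V^{\mathcal U}\mathcal K^{\mathcal U}$ together with words ending in $A_1^\circ$, and on it $A_2$ acts by $\mathbb E_2$ after compression. Finally, $pA_1V^{\mathcal U}=0$. Some care is needed because $\mathcal H^{\mathcal U}$ is larger than the image of the free product module. To handle this, we define $p$ inside the image subspace and check the relations on vectors of the form $V^{\mathcal U}\eta$ and their images; if necessary, we replace $\mathcal H$ by an amplification so that the relations hold on all of $\mathcal H^{\mathcal U}$.

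\emph{(ii)$\Rightarrow$(i).} Define $\theta_0$ on the algebraic amalgamated free product by $\Delta$ on $A_1$ and $\pi_2$ on $A_2$; this is consistent on $D$ by (a). There are two things to show. First, we need $\mathbb E_1^{\mathcal U}\circ\theta_0=\Delta\circ\mathbb E$. Second, $\theta_0$ must be isometric for the reduced norm, so that it extends to an embedding. The key step is the following induction claim, proved using (c): for an alternating centred word $x=x_1\cdots x_n$ and $\eta\in\mathcal K^{\mathcal U}$, the vector $\pi_1^{\mathcal U}\theta_0(x)V^{\mathcal U}\eta$ lies in $p\mathcal H^{\mathcal U}$ if $x_1\in A_2^\circ$, and in $p^\perp\mathcal H^{\mathcal U}\ominus V^{\mathcal U}\mathcal K^{\mathcal U}$ if $x_1\in A_1^\circ$.

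The claim is proved as follows. The first relation in (c) shows that $A_1^\circ$ maps $p\mathcal H^{\mathcal U}$ into $p^\perp$; the second shows that $A_2^\circ$ maps $p^\perp$ into $p$; and the third handles the initial step. Taking inner products with $V^{\mathcal U}$ and using (b) ultrapowered gives $V^{\mathcal U*}\pi_1^{\mathcal U}\theta_0(x)V^{\mathcal U}=0$ for non-trivial words, hence the expectation identity (since $\rho$ is faithful). The same computation shows that $\xi\otimes\eta\mapsto\pi_1^{\mathcal U}\theta_0(\xi)V^{\mathcal U}\eta$ is an isometry $W$ from $L^2(A,\mathbb E)\otimes_{\rho^{\mathcal U}}\mathcal K^{\mathcal U}$ into $\mathcal H^{\mathcal U}$ intertwining the free product representation with $\pi_1^{\mathcal U}\circ\theta_0$. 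This yields $\|x\|_{\mathrm{red}}\le\|\theta_0(x)\|$.

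The reverse inequality $\|\theta_0(x)\|\le\|x\|_{\mathrm{red}}$ is the main obstacle, since a priori the representation on $\mathcal H^{\mathcal U}$ is larger than the range of $W$. The plan is to decompose $\mathcal H^{\mathcal U}$ according to $p$ into a free-product-type structure. Set $\mathcal L_2:=p\mathcal H^{\mathcal U}$ and $\mathcal L_1:=p^\perp\mathcal H^{\mathcal U}$. The relations in (c) say that these behave like the "not starting with $A_1$" and "not starting with $A_2$" pieces. One then builds a unitary identifying $\mathcal H^{\mathcal U}$ with a free product of Hilbert $D$-modules with a common "vacuum" module $\mathcal K_0$. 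This module $\mathcal K_0$ is the part of $\mathcal L_1$ on which $A_1^\circ$ compresses to zero, and it contains $V^{\mathcal U}\mathcal K^{\mathcal U}$; its existence is guaranteed by the second relation in (c). Using the universal property of reduced amalgamated free products (a representation compatible with the free structure factors through the reduced norm), it follows that $\pi_1^{\mathcal U}\circ\theta_0$ is bounded by the reduced norm. If this identification proves delicate, the fallback is to use that $\pi_1^{\mathcal U}\circ\theta_0$ restricted to the free product module is already faithful on the reduced algebra, and to obtain boundedness via a Haagerup-type or Blanchard--Dykema criterion: a $\ast$-homomorphism of the algebraic free product preserving the expectation conditions extends to the reduced product when the expectation is non-degenerate on the image. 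Finally, the isometric extension is $\theta$, and $\theta(A_2)\subseteq B^{\mathcal U}$ holds by construction.
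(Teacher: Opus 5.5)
Both implications have a genuine gap. In each case it sits exactly at the step you flag as delicate.

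\emph{(i)$\Rightarrow$(ii).} The relations in (c) must hold on all of $\mathcal{H}^{\mathcal U}$, but your $p$ is only defined on the cyclic subspace $\mathcal{H}_\beta:=\overline{\operatorname{span}}\,\beta(A)V^{\mathcal U}\mathcal{K}^{\mathcal U}$, where $\beta:=\pi_1^{\mathcal U}\circ\theta$. (There, the relevant words are those whose \emph{leftmost} letter lies in $A_2^\circ$; if ``ending'' is taken literally, the first relation fails.)

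The complement $\mathcal{H}_\beta^\perp$ is in general non-zero and reduces $\beta$. Neither $p=0$ nor $p=1$ works there: these would force $\beta(a_2)=\mathbb{E}_2(a_2)$, respectively $\beta(a_1)=\mathbb{E}_1(a_1)$, on $\mathcal{H}_\beta^\perp$, which fails in general. Choosing $p$ on the complement amounts to extending $\beta|_{\mathcal{H}_\beta^\perp}$ to a representation of Hasegawa's algebra $\Delta\mathbf{T}(A,\mathbb{E})$ on the same space, and this is not automatic. Amplifying $\mathcal{H}$ by itself only enlarges the complement.

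The missing idea is an absorption argument:
\begin{enumerate}
\item Take $\mathcal{H}=\mathcal{F}\otimes\ell^2(S)$, where $\mathcal{F}=\mathfrak{X}\otimes_D\mathcal{K}$ is the free product Hilbert space and $|S|=\kappa=2^\lambda$ with $\lambda$ large.
\item Compare $\beta$ with the diagonal amplification $\alpha$ of the canonical free product representation, for which $(p_0\otimes 1)^{\mathcal U}$ satisfies (c) globally. The two agree on $A_1$.
\item Their cyclic parts over $V^{\mathcal U}\mathcal{K}^{\mathcal U}$ are unitarily equivalent via a unitary fixing $V^{\mathcal U}$; this is where $\mathbb{E}_1^{\mathcal U}\circ\theta=\Delta\circ\mathbb{E}$ enters.
\item On the complements every non-zero element has rank $\kappa$, so Hadwin's non-separable Voiculescu theorem makes $\alpha$ and $\beta$ approximately unitarily equivalent there.
\item Passing to a Fubini ultrafilter $\mathcal{U}\otimes\mathcal{W}$ makes this exact. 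This gives a unitary $w$ with $wV=V$ and $w\beta w^*=\alpha$, hence commuting with $A_1$, and $p:=w^*(p_0\otimes 1)^{\mathcal{U}\otimes\mathcal{W}}w$ works.
\end{enumerate}
In particular, the ultrafilter in (ii) differs from the one in (i), whereas your proposal implicitly keeps $\mathcal{U}$.

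\emph{(ii)$\Rightarrow$(i).} Your word argument for the expectation identity and the lower bound $\|x\|_{\mathrm{red}}\le\|\theta_0(x)\|$ are fine and match the paper. The upper bound is where the proposal breaks.

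A pair $(\sigma,p)$ satisfying the first two relations in (c) need not be a free product module over a vacuum in $p^\perp\mathcal{H}^{\mathcal U}$. Vacua can also sit in $p\mathcal{H}^{\mathcal U}$, and, more seriously, there can be ``boundary'' pieces with no vacuum at all. Consider the ultrapower of the regular representation of $\Gamma=\mathbb{Z}/2*\mathbb{Z}/3=\langle a\rangle*\langle b\rangle$, with $p$ the ultrapower of the projection onto words beginning with a $b$-letter. The orthonormal vectors $\zeta_h:=(\delta_{h(ab)^i})_{i\to\mathcal U}$, $h\in\Gamma$, span a subspace $\mathcal{M}$ reducing $\sigma$ and $p$, with $p^\perp\mathcal{M}=\overline{\operatorname{span}}\,\sigma(A_1^\circ)p\mathcal{M}$ and $p\mathcal{M}=\overline{\operatorname{span}}\,\sigma(A_2^\circ)p^\perp\mathcal{M}$. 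So $\mathcal{M}$ has no vacuum on either side; it corresponds to an end of the Bass--Serre tree. The unitary identification you describe therefore does not exist in general.

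Your fallback is circular. The GNS representation of the closure of $\theta_0$ (applied to the algebraic free product) with respect to $\mathbb{E}_1^{\mathcal U}$ is precisely the reduced free product representation. Hence non-degeneracy of $\mathbb{E}_1^{\mathcal U}$ on the image is \emph{equivalent} to the bound you want, and $\mathbb{E}_1^{\mathcal U}$ is typically degenerate on $A_1^{\mathcal U}$.

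Reduced amalgamated free products have no universal property of the kind you invoke. The needed input is Hasegawa's theorem: the universal \Cs-algebra generated by the algebraic amalgamated free product and a projection subject to exactly the first two relations in (c) is $\Delta\mathbf{T}(A,\mathbb{E})$, inside which $A_1$ and $A_2$ generate the \emph{reduced} free product. This handles the boundary pieces and gives the extension $\sigma$ directly. Your word argument and the non-degeneracy of $\mathbb{E}$ then finish the proof.
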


In the main body of the article, we prove the two implications of Theorem \ref{MainTheorem} separately as Proposition \ref{Direction1} and Proposition \ref{Direction2}. 
Our approach is based on Hasegawa's construction of a
canonical ambient \Cs-algebra $\Delta\mathbf{T}(A,\mathbb{E})$ associated with a reduced
amalgamated free product \cite{Hasegawa18}; a related
construction for reduced graph products appears
in \cite{Klisse25}. Its universal property turns complementary compression relations into a representation of the reduced amalgamated free product. In the converse direction, Hadwin's non-separable version of Voiculescu's theorem \cite{Had81} allows us to transfer the canonical free product representation to a Hilbert space ultrapower while retaining the subspace which implements the conditional expectation.
In particular, our argument does not invoke the universal property of a reduced (amalgamated) free product with the Toeplitz algebra, as used in Ozawa's
proof of his criterion for complete selflessness in
\cite[Theorem~13]{Ozawa25}.
We recover this criterion as a consequence of
Theorem~\ref{AbstractCriterion-completely-selfless};
see Remark~\ref{OzawaRemark}.


\subsection*{A new characterisation of selflessness}
Inspired by the work of Popa \cite{Pop95}, Dykema and R{\o}rdam \cite{DR98,DR00}, Robert introduced \emph{selfless} \Cs-probability spaces in \cite{Rob25} to capture a mechanism by which free product structures can be realised inside ultrapowers. A \Cs-probability space $(A,\rho)$ is selfless if the first free factor embedding of $(A,\rho)$ into its reduced free product  with another non-trivial \Cs-probability space is existential. This allows structural properties of reduced free products to be transferred back to $A$, in a way analogous to that in \cite{BS16}. The resulting class includes the Jiang--Su algebra $\mathcal Z$, infinite-dimensional UHF algebras, and reduced free product examples \cite{Rob25,HKER25,HKEPR25,FFKP26a,GKPT26,GJKPR26}, as well as a broad class of reduced group \Cs-algebras \cite{Rob25,AGKP25,Vig25,RTV25,Ozawa25,MWY25a,MWY25b, FFKP26a,FFKP26b,Vig26,GKPT26,Oml26a,FG26,LBLV26,Ari26, Oml26b,AF26}, and crossed products \cite{MWY25a,MWY25b,FFKP26b,GJKPR26,Bell26a,Ohs26,Bell26b}. A central feature of this theory is that selfless \Cs-probability spaces split into two distinct classes:

\begin{equation*}
    (A,\rho)\text{ is selfless}
    \Longleftrightarrow
    \begin{cases}
        \text{if $\rho$ is tracial:}
        &
        \parbox[t]{0.5\textwidth}{
            $A$ is infinite-dimensional and simple,
            $\rho$ is the unique tracial state on $A$,
            and $A$ has strict comparison of positive
            elements with respect to $\rho$,
        }
        \\[4ex]
        \text{if $\rho$ is not tracial:}
        &
        \text{$A$ is simple and purely infinite.}
    \end{cases}
\end{equation*}

The tracial characterisation follows from \cite[Theorem~3.1]{Rob25} and \cite[Theorem 1.1]{Rob26}. The non-tracial part is a consequence of  \cite[Theorem~3.1]{Rob25} (and \cite[Theorem~2]{Gould26} for the non-faithful case), and \cite[Theorem 3]{Ozawa25}. We stress that, in the tracial setting, selflessness also implies stable rank one (see \cite[Theorem~3.1]{Rob25} or \cite[Theorem 1.1]{Lin25}), and uniqueness of unital embeddings of $\mathcal Z$ up to strong asymptotic unitary equivalence (see \cite[Theorem B]{Pag26}, and \cite[Proposition 6.3.1]{Rob12} for the statement up to approximate unitary equivalence). While the emergence of such strong regularity from free product
constructions in the non-nuclear setting is striking, the aforementioned dichotomy for selfless \Cs-algebras parallels the classification theory of simple nuclear \Cs-algebras; see 
\cite{Win14,Win16,GLN20a,GLN20b,EGLN25,TWW17,CETWW21,CGSTW23,Kir95,KP00,Phi00} for a selection of the many contributions to this collective endeavour.

Since Robert's work first saw light, it inspired a plethora of interesting new concepts. In \cite{HKEPR25}, selflessness was extended to inclusions of
\Cs-probability spaces $B\subseteq(A,\rho)$ by additionally
requiring the auxiliary free factor to be represented in the
norm ultrapower of $B$.
Every intermediate \Cs-probability space of a selfless inclusion is selfless, and consequently, this relative notion gives access to regularity for intermediate algebras. 
To obtain a notion compatible with spatial tensor
products of \Cs-probability spaces, Ozawa defined in \cite{Ozawa25} a strengthening called \emph{complete selflessness}. His definition ensures that spatial tensor products of completely selfless \Cs-probability spaces remain completely selfless.  
Furthermore, in \cite{GJKPR26} a relative notion of selflessness was defined. In particular, the latter work presents the right operator-valued extension of selflessness, called \emph{relative selflessness}, for operator-valued \Cs-probability spaces.


With the full power of Theorem \ref{MainTheorem} at hand, we provide a new characterisation of (completely) selfless \Cs-probability spaces, selfless inclusions, and relative selflessness. To simplify notation, we adopt the following ad hoc terminology
throughout the article.

\begin{definition*}
    Let $(A,\rho)$ be a \Cs-probability space. A \emph{(faithful) representation} $(\pi,\mathcal{H},\xi)$ of $(A,\rho)$ consists of a (faithful) unital $\ast$-representation $\pi\colon A\to\mathcal{B}(\mathcal{H})$ on a Hilbert space $\mathcal{H}$, and a unit vector $\xi\in \mathcal{H}$ such that $\rho(a)=\langle \xi, \pi(a)\,\xi\rangle$ for all $a\in A$.
\end{definition*}

\begin{theorem} \label{AbstractCriterion-general} Let $B\subseteq(A,\rho)$ be an inclusion of \Cs-probability spaces. Then the following statements are equivalent:

\begin{enumerate}[label=\textup{(\roman*)},leftmargin=*]
\item The inclusion $B\subseteq(A,\rho)$ is selfless.
\item There exist an ultrafilter $\mathcal{U}$, a non-trivial \Cs-probability space $(C,\gamma)$ with a state-preserving $\ast$-embedding $\theta\colon(C,\gamma)\hookrightarrow(B^{\mathcal{U}},\rho^{\mathcal{U}})$, a faithful representation $(\pi,\mathcal{H},\xi)$ of $(A,\rho)$, and a projection $p\in\mathcal{B}(\mathcal{H}^{\mathcal{U}})$ such that
\begin{align*}
p\,\bigl((\pi^{\mathcal{U}}\circ\Delta)(a)\bigr)\,p &= \rho(a)\,p, \\
p^{\perp}\,\bigl((\pi^{\mathcal{U}}\circ\theta)(c)\bigr)\,p^{\perp}&= \gamma(c)\,p^{\perp}, \\
p\,\bigl((\pi^{\mathcal{U}}\circ\Delta)(a)\bigr)\,\xi^\mathcal{U} &= 0,
\end{align*}
for all $a\in A$ and $c\in C$.
\end{enumerate}
\end{theorem}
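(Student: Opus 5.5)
The plan is to specialise Theorem~\ref{MainTheorem} to $D=\mathbb{C}1$, $A_1=A$, $\mathbb{E}_1=\rho$, $A_2=C$, $\mathbb{E}_2=\gamma$, and then to translate between the two sets of data. First I recall the definition from \cite{HKEPR25}: the inclusion $B\subseteq(A,\rho)$ is selfless if there are an ultrafilter $\mathcal{U}$, a non-trivial \Cs-probability space $(C,\gamma)$, and a state-preserving embedding $\theta\colon(A,\rho)\star(C,\gamma)\hookrightarrow(A^{\mathcal{U}},\rho^{\mathcal{U}})$ with $\theta|_A=\Delta$ and $\theta(C)\subseteq B^{\mathcal{U}}$. (If the definition only requires this for \emph{some} non-trivial $(C,\gamma)$, the equivalence is exactly at the level of a fixed $C$; if it fixes $C$, say $C=C^\ast_r(\mathbb{Z})$, the same argument applies verbatim with that $C$.) With $D=\mathbb{C}$, condition (i) of Theorem~\ref{MainTheorem} is precisely this embedding. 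Non-degeneracy of $\gamma$ is harmless: we may pass to the GNS quotient of $C$, which is still non-trivial and still embeds, via $\theta$ composed with the quotient, because $\rho^{\mathcal{U}}$ restricted to $\theta(C)$ is $\gamma$.

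For (i)$\Rightarrow$(ii), I apply Theorem~\ref{MainTheorem} (i)$\Rightarrow$(ii) to get $\mathcal{H}$, $\mathcal{K}$, $\pi_1$, $\pi_2$, $\rho'\colon\mathbb{C}\to\mathcal{B}(\mathcal{K})$, $V$ and $p$. The key observation is that $\mathcal{K}$ may be taken one-dimensional. Condition (b) says $V^\ast\pi_1(a)V=\rho(a)1_{\mathcal{K}}$, so for any unit vector $\eta\in\mathcal{K}$ the vector $\xi:=V\eta$ satisfies $\langle\xi,\pi_1(a)\xi\rangle=\rho(a)$. Hence $(\pi_1,\mathcal{H},\xi)$ is a faithful representation of $(A,\rho)$. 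The third relation in (c), applied to the vector $\eta^{\mathcal{U}}$, gives $p\,(\pi_1^{\mathcal{U}}\Delta(a))\,\xi^{\mathcal{U}}=0$. The first two relations are verbatim the required ones, with $\theta:=\pi_2$, and $\pi_2$ is state-preserving, as follows from $\rho^{\mathcal{U}}\circ\theta=\Delta\circ\gamma$ in (i) restricted to $C$.

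For (ii)$\Rightarrow$(i), I construct the data for Theorem~\ref{MainTheorem}(ii). Take $\mathcal{K}=\mathbb{C}$, $V\colon\lambda\mapsto\lambda\xi$, $\pi_1=\pi$, $\pi_2=\theta$ and $\rho'$ the scalar representation. Condition (a) holds because $\theta$ is unital. For (b), $V^\ast\pi(a)V=\langle\xi,\pi(a)\xi\rangle=\rho(a)$. For (c), $V^{\mathcal{U}}$ is identified with $\lambda\mapsto\lambda\xi^{\mathcal{U}}$ since $\mathbb{C}^{\mathcal{U}}=\mathbb{C}$, so the three relations are exactly the assumed ones. Theorem~\ref{MainTheorem} then yields an embedding $(A,\rho)\star(C,\gamma)\hookrightarrow A^{\mathcal{U}}$ fixing $A$, preserving states and mapping $C$ into $B^{\mathcal{U}}$; that is, the inclusion is selfless.

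The main obstacle is bookkeeping rather than depth. First, one must check that the hypotheses of Theorem~\ref{MainTheorem} are met: $\gamma$ should be non-degenerate, which is handled as above, and $\theta$ should be injective. Second, one must confirm that the identification $\mathbb{C}^{\mathcal{U}}\cong\mathbb{C}$ matches $V^{\mathcal{U}}$ with $\xi^{\mathcal{U}}$. Finally, one must verify that the ultrafilter produced in (ii) can be used for both the Hilbert and the \Cs-ultrapowers, which holds because Theorem~\ref{MainTheorem} uses a single $\mathcal{U}$ throughout.
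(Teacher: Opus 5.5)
Your route is the paper's. You specialise to $D=\mathbb{C}$ and use the two halves of Theorem~\ref{MainTheorem} (Propositions~\ref{Direction1} and~\ref{Direction2}), with $\xi=V\eta$ in one direction and $V(z)=z\xi$ in the other. Your (ii)$\Rightarrow$(i) direction is exactly the paper's argument.

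In (i)$\Rightarrow$(ii), however, your reason why $\pi_2$ is state-preserving does not hold as written. Theorem~\ref{MainTheorem}(ii) does not link $\pi_2$ to the embedding from (i). The ultrafilters in (i) and (ii) are independent, and in the proof of Proposition~\ref{Direction1} the ultrafilter actually changes to $\mathcal{U}\otimes\mathcal{W}$. So you cannot read off $\rho^{\mathcal U}\circ\pi_2=\gamma$ from $\rho^{\mathcal U}\circ\theta|_C=\gamma$. The conclusion is true, because the proof produces $\pi_2=\widetilde\Delta\circ\theta|_C$, but that fact is not part of the statement you are invoking.

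The paper closes this step using the relations themselves. Taking $a=1$ in the third identity gives $p\,\xi^{\mathcal U}=0$, hence $p^\perp\xi^{\mathcal U}=\xi^{\mathcal U}$. Since $\xi$ implements $\rho$ through $\pi_1$, the vector $\xi^{\mathcal U}$ implements $\rho^{\mathcal U}$ on $A^{\mathcal U}$ through $\pi_1^{\mathcal U}$ coordinatewise. Therefore
\[ \rho^{\mathcal U}(\pi_2(c))=\bigl\langle \xi^{\mathcal U},(\pi_1^{\mathcal U}\circ\pi_2)(c)\,\xi^{\mathcal U}\bigr\rangle=\bigl\langle \xi^{\mathcal U},p^\perp(\pi_1^{\mathcal U}\circ\pi_2)(c)\,p^\perp\xi^{\mathcal U}\bigr\rangle=\gamma(c). \]
With this one-line fix your argument is complete.

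Your remark about passing to the GNS quotient of $C$ is unnecessary. By the paper's conventions $\gamma$ is already GNS-faithful, and in general such a quotient need not remain non-trivial.
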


Our characterisation expresses selflessness in terms of complementary compression relations for a projection that need not belong to the represented \Cs-algebra.
This flexibility is particularly useful when the projection is constructed from a natural decomposition of the underlying Hilbert space.

Our approach also allows us to describe  complete selflessness of \Cs-probability spaces. Recall that a \Cs-probability space $(A,\rho)$ is called \emph{completely selfless} if a non-trivial reduced free product extension embeds into its norm ultrapower while fixing the diagonal copy of $A$, and this embedding induces embeddings into $(A\otimes C)^{\mathcal U}$ upon tensoring with any auxiliary \Cs-algebra $C$. For an inclusion $B\subseteq(A,\rho)$, complete selflessness is defined by additionally requiring the auxiliary free factor to be mapped into $B^{\mathcal U}$.

\begin{theorem} \label{AbstractCriterion-completely-selfless} Let $B\subseteq(A,\rho)$ be an inclusion of \Cs-probability spaces. Then the following statements are equivalent:

\begin{enumerate}[label=\textup{(\roman*)},leftmargin=*]
\item  $B\subseteq(A,\rho)$ is completely selfless.
\item There exist an ultrafilter $\mathcal{U}$, a non-trivial \Cs-probability space $(C,\gamma)$ with a state-preserving $\ast$-embedding $\theta\colon(C,\gamma)\hookrightarrow(B^{\mathcal{U}},\rho^{\mathcal{U}})$, a unital faithful representation $(\pi,\mathcal{H},\xi)$ of $(A,\rho)$, a unit vector $\eta\in \ell^2$, and a projection $p\in\mathcal{B}((\mathcal{H}\otimes\ell^2)^{\mathcal{U}})$ such that
\begin{align*}
[p, (\id_{\mathcal{H}}\otimes d)^\mathcal{U}] &= 0, \\
p\,\bigl(((\pi\otimes 1_{\cal{B}(\ell^2)})^{\mathcal{U}}\circ \Delta) (a)\bigr)\,p &= \rho(a)\,p, \\
p^{\perp}\,\bigl(((\pi\otimes 1_{\cal{B}(\ell^2)})^{\mathcal{U}}\circ \theta)(c)\bigr)\,p^{\perp}&= \gamma(c)\,p^{\perp}, \\
p\,\bigl(((\pi\otimes 1_{\cal{B}(\ell^2)})^{\mathcal{U}}\circ \Delta)(a)\bigr)\,(\xi\otimes \eta)^\mathcal{U} &= 0, 
\end{align*}
for all $a\in A$, $c\in C$, $d\in\mathcal B(\ell^2)$.
\item There exist an ultrafilter $\calU$, a unital faithful representation $(\pi,\mathcal H,\xi)$ of $(A,\rho)$, a unit vector $\eta\in\ell^2$, and an isometry $T\in\mathcal B((\mathcal H\otimes\ell^2)^{\mathcal U})$ such that $T+T^* \in (\pi\otimes 1_{\cal{B}(\ell^2)})^{\calU}(B^{\calU})$ and
\begin{align*}
[T, (\id_{\mathcal{H}}\otimes d)^\mathcal{U}] &= 0, \\
T^*\,\bigl(((\pi\otimes 1_{\cal{B}(\ell^2)})^{\mathcal{U}}\circ \Delta)(a)\bigr)\,T &= \rho(a)\,1, \\
T^*\,\bigl(((\pi\otimes 1_{\cal{B}(\ell^2)})^{\mathcal{U}}\circ \Delta)(a)\bigr)\,(\xi\otimes \eta)^\mathcal{U} &= 0, 
\end{align*}
for all $a\in A$ and $d\in\mathcal B(\ell^2)$.
\end{enumerate}
\end{theorem}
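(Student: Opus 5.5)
The plan is to prove (i)$\Leftrightarrow$(ii) by reducing to Theorem~\ref{MainTheorem}, and to connect (ii) with (iii) through the free semicircular element generated by a Toeplitz isometry. Complete selflessness of $B\subseteq(A,\rho)$ amounts to existentiality of the first free factor embedding after tensoring with $\mathcal B(\ell^2)$. Concretely, I will use the reformulation that it suffices to test against $C=\mathcal B(\ell^2)$, or equivalently against the compacts, with ultrapowers taken in $(A\otimes\mathcal B(\ell^2))^{\mathcal U}$ or in its spatial version on $(\mathcal H\otimes\ell^2)^{\mathcal U}$. I will apply Theorem~\ref{MainTheorem} in the amalgamated setting: take $A_1=A\otimes\mathcal B(\ell^2)$ with $D=\mathbb C\otimes\mathcal B(\ell^2)$ (or $\mathbb K$ in a unitised form), $\mathbb E_1=\rho\otimes\id$, and $A_2=C\otimes\mathcal B(\ell^2)$ with $\mathbb E_2=\gamma\otimes\id$. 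The key identity is
\[
(A\otimes\mathcal B(\ell^2),\rho\otimes\id)\star_{\mathcal B(\ell^2)}(C\otimes\mathcal B(\ell^2),\gamma\otimes\id)\cong(A\star C)\otimes\mathcal B(\ell^2),
\]
which holds at least for the compacts version; this is where the tensor compatibility enters.

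For (i)$\Rightarrow$(ii), I apply (i)$\Rightarrow$(ii) of Theorem~\ref{MainTheorem} to this amalgamated data. It yields a representation $\pi_1$ of $A\otimes\mathcal B(\ell^2)$, which by a Hadwin/Voiculescu-type absorption may be taken of the form $\pi\otimes\id$ on $\mathcal H\otimes\ell^2$. It also yields an isometry $V\colon\mathcal K\to\mathcal H\otimes\ell^2$ implementing $\rho\otimes\id$, which I choose as $V=\xi\otimes\mathrm{id}_{\ell^2}$, and a projection $p$. I then need to check that $p$ commutes with $(\id_{\mathcal H}\otimes d)^{\mathcal U}$. This follows from the first compression relation applied to $a=1\otimes d$: it gives $p(1\otimes d)p=(1\otimes d)p$ for all $d$, hence $p$ commutes with $1\otimes d$. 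The remaining relations are obtained by restricting to $a\otimes 1$ and $c\otimes1$ and applying $V^{\mathcal U}$ to the vector $\xi\otimes\eta$.

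For (ii)$\Rightarrow$(i), I run the argument backwards. I set $\pi_1=\pi\otimes\id$ and $\pi_2=\theta\otimes\id$, and use the commutation $[p,1\otimes d]=0$ to upgrade the scalar compression relations to the $\mathcal B(\ell^2)$-valued ones required in Theorem~\ref{MainTheorem}(c). The vanishing condition $p\,\pi(a)(\xi\otimes\eta)^{\mathcal U}=0$ propagates to all of $\xi\otimes\ell^2$, since $1\otimes d$ commutes with $p$ and with $\pi(a)\otimes1$, and moves $\eta$ to any vector. This gives $p\,\pi_1(x)V^{\mathcal U}=0$.

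For (ii)$\Rightarrow$(iii), I first choose $(C,\gamma)$ to be a Bernoulli or Haar unitary pair, which is possible after shrinking via the universal property. I then build $T$ from the free semicircular: the second free factor generated inside $p^{\perp}$ together with the standard identification $\mathcal F(\mathbb C)$, so that $T=$ the creation operator and $T+T^*$ is the image of a semicircular element of $B^{\mathcal U}$. The converse (iii)$\Rightarrow$(ii) takes $C=C^*(s)$ semicircular with $\theta(s)=T+T^*$, and lets $p$ be $1-TT^*$ restricted appropriately, checking the three relations via $T^*aT=\rho(a)$. The \textbf{main obstacle} is this Toeplitz step: verifying that $p^{\perp}(T+T^*)^np^{\perp}$ compresses to semicircular moments requires controlling the range structure $\bigoplus_n T^n p$. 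Here I would mimic the Fock-space decomposition in Ozawa's \cite[Theorem~13]{Ozawa25}, as indicated in Remark~\ref{OzawaRemark}.
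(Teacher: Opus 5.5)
\textbf{There are genuine gaps.} The main failure is in how you reach (iii); there is also an unjustified tensor-product step in (ii)$\Rightarrow$(i).

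\emph{The tensor-product step in (ii)$\Rightarrow$(i).} Reducing to Theorem~\ref{MainTheorem} with amalgamation over $D=\mathcal B(\ell^2)$ is the right idea. The problem is that you set $\pi_2=\theta\otimes\id$ on $C\otimes\mathcal B(\ell^2)$. For a general $C\subseteq B^{\mathcal U}$, nothing makes this a $\ast$-homomorphism on the \emph{minimal} tensor product into $(B\otimes\mathcal B(\ell^2))^{\mathcal U}$.
\begin{itemize}
\item The images of $C$ and of $\Delta(1\otimes\mathcal B(\ell^2))$ commute, but that only gives a map on $C\otimes_{\max}\mathcal B(\ell^2)$.
\item Since $\mathcal B(\ell^2)$ is not exact, the canonical map $B^{\mathcal U}\odot\mathcal B(\ell^2)\to(B\otimes\mathcal B(\ell^2))^{\mathcal U}$ need not be min-continuous.
\item Min-continuity of $\theta\odot\id$ is part of what you are trying to prove for this $C$, so it cannot be an input.
\end{itemize}
The paper fixes this with Lemma~\ref{lem:nuclear-probability-subspace}. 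It replaces $C$ by a non-trivial abelian $C_0\subseteq C$ on which $\gamma$ is faithful (a unitary conjugate of some $\Cs(h)$, found by a Baire argument), so that $C_0\otimes\mathcal B(\ell^2)=C_0\otimes_{\max}\mathcal B(\ell^2)$. Falling back to the compacts does not help: for nuclear $E$ the continuity of $\Phi\odot\id_E$ is automatic, so testing against $\mathbb K$ says nothing about complete selflessness.

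A related point: commuting with constant $(\id_{\mathcal H}\otimes d)^{\mathcal U}$ only moves $(\xi\otimes\eta)^{\mathcal U}$ to other constant vectors. Theorem~\ref{MainTheorem}(c) requires the vanishing on all of $V^{\mathcal U}(\ell^2)^{\mathcal U}$. The paper sidesteps this by applying Hasegawa's universal property directly and checking $\rho^{\mathcal U}\circ\Phi=\rho\star\gamma$ against the single vector $(\xi\otimes\eta)^{\mathcal U}$.

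\emph{The route to (iii).} This is where the argument actually fails. You cannot shrink a general non-trivial $(C,\gamma)$ to a Haar unitary; this needs a diffuse distribution, and e.g.\ $C=\mathbb C^2$ has none. A Bernoulli variable produces no semicircular element. So you have no candidate in $B^{\mathcal U}$ for $T+T^*$. The missing input is that, once (i) holds, complete selflessness can be witnessed with auxiliary factor $(C(\mathbb T),\leb)\supseteq(\mathcal C,\tau_{\mathcal C})$. The paper obtains this from \cite{Rob25,HKEPR25} and \cite[Lemma~10]{Ozawa25}; it goes through (i), not through a universal property.

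Even with a semicircular $s$ satisfying $p^\perp f(s)p^\perp=\tau_{\mathcal C}(f)p^\perp$, the Fock identification only writes $s$ as $T_0+T_0^*$ on $\overline{\Cs(s)\,p^\perp(\mathcal H\otimes\ell^2)^{\mathcal U}}$. Nothing in (ii) makes that subspace everything, so $T$ would still have to be defined on its complement. The paper avoids this by proving (i)$\Rightarrow$(iii) instead. In the Fock space of $(A,\rho)\star(\mathcal C,\tau_{\mathcal C})$, the canonical creation isometry $L$ satisfies $L+L^*=t$, $L^*aL=\rho(a)1$ and $L^*a\Omega=0$. It is then transported by the Hadwin intertwiner $w$ to $T=w^*(L\otimes1\otimes1)^{\mathcal U}w$.

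\emph{Smaller points.}
\begin{itemize}
\item In (iii)$\Rightarrow$(ii) you need $p=TT^*$, not $1-TT^*$: the relation $T^*aT=\rho(a)1$ is what gives $pap=\rho(a)p$.
\item The step you flag as the main obstacle is immediate. $T^*T=1$ reduces every word to $T^m(T^*)^n$, and $p^\perp T=0=T^*p^\perp$ kills all of these except $m=n=0$.
\item In (i)$\Rightarrow$(ii), the statement of Theorem~\ref{MainTheorem} does not give you $\pi_1=\pi\otimes\id$, $V=\xi\otimes\id_{\ell^2}$, or $\pi_2(c\otimes1)\in(\pi\otimes1)^{\mathcal U}(B^{\mathcal U})$. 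Nor can $V$ be chosen after $p$, since $p$ is tied to $V$. These facts come out of rerunning the construction of Proposition~\ref{Direction1} with $D=\mathcal B(\ell^2)$ and $\mathcal K=\ell^2$.
\item Your derivation of $[p,1\otimes d]=0$ from $p(1\otimes d)p=(1\otimes d)p$ for all $d$ is correct.
\end{itemize}
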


In the assertions \textup{(ii)} and \textup{(iii)} of Theorem \ref{AbstractCriterion-completely-selfless}, the conditions may equivalently be required to hold for every unit vector $\eta\in\ell^2$, with all remaining data independent of $\eta$. The Theorem's third assertion should be compared with Ozawa's criterion for complete selflessness in \cite[Theorem 13]{Ozawa25}.


\subsection*{Applications of the main results}

The first application of our new criterion in Theorem \ref{AbstractCriterion-general} concerns reduced amalgamated free products. It shows that a net of unitaries in one factor satisfying suitable expectation estimates produces a completely selfless inclusion in the free product, with the smaller algebra explicitly generated by these unitaries and a subalgebra of the other factor.

\begin{theorem} \label{thm:amalgamated-free-product} Let $(A_1,\rho_1)$ and $(A_2,\rho_2)$ be  \Cs-probability spaces, let $D\subseteq A_1, A_2$ be a common unital \Cs-subalgebra equipped with a state $\delta$, and let $\mathbb{E}_1 \colon (A_1,\rho_1)\rightarrow (D,\delta)$ and $\mathbb{E}_2 \colon (A_2,\rho_2)\rightarrow (D,\delta)$ be state-preserving conditional expectations. Suppose there exists a net \((v_i)_{i\in I}\subseteq A_1\) of unitaries such that, for every \(a\in A_1\),
\begin{equation*}
\|\mathbb{E}_1(v_i^*av_i)-\rho_1(a)1_D\|\longrightarrow0,\quad \|\mathbb{E}_1(v_i^*a)\|\longrightarrow0.
\end{equation*}
Suppose furthermore that \(A_2\) contains a non-trivial unital \Cs-probability subspace $C\subseteq (A_2,\rho_2)$ satisfying $\mathbb{E}_2(c)=\rho_2(c)1_D$ for all $c\in C$. Then $(A_1,\mathbb{E}_1)\star_D(A_2,\mathbb{E}_2)$ equipped with $\delta\circ (\mathbb{E}_1 \star \mathbb{E}_2)$ is completely selfless.

Moreover, if the restriction of $\delta\circ (\mathbb{E}_1 \star \mathbb{E}_2)$ to $\Cs\bigl(C\cup\{v_i\mid i\in I\}\bigr)$
is GNS-faithful, then the inclusion
\[
\Cs\bigl(C\cup\{v_i\mid i\in I\}\bigr)\subseteq\left( (A_1,\mathbb{E}_1)\star_D(A_2,\mathbb{E}_2), \delta\circ (\mathbb{E}_1 \star \mathbb{E}_2)\right)
\]
is completely selfless.
\end{theorem}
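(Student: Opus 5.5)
We will verify condition (iii) of Theorem \ref{AbstractCriterion-completely-selfless} for the inclusion $B=\Cs\bigl(C\cup\{v_i\mid i\in I\}\bigr)\subseteq (A,\rho)$, where $(A,\mathbb{E})=(A_1,\mathbb{E}_1)\star_D(A_2,\mathbb{E}_2)$ and $\rho=\delta\circ\mathbb{E}$. The first assertion then follows by taking $B=A$. The guiding idea is that, with $c$ a centred element of $C$, the words $v_i c v_i^*$ should behave asymptotically like a free semicircular (or Haar unitary) copy free from $A$. So we build the Toeplitz-type isometry $T$ from these elements.

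\textbf{Set-up.} Let $(\pi,\mathcal H,\xi)$ be the GNS representation of $(A,\rho)$; it is faithful because $\delta$ is GNS-faithful on the relevant algebras, and we can pass to $\pi\oplus$ (a faithful representation) if necessary. Since $C$ is non-trivial, choose $c\in C$ with $\rho_2(c)=0$ and $\rho_2(c^*c)=1$. Because $\mathbb{E}_2(c)=\rho_2(c)1_D$, the element $c$ is centred with respect to $\mathbb{E}_2$, and $\mathbb{E}_2(c^*xc)$ is a scalar for $x\in D$ up to the freeness computation. Put $w_i=v_icv_i^*\in B$.

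\textbf{Key estimates.} For $a\in A$ we aim to show that $\mathbb{E}(w_i^*\,a\,w_i)\to\rho(a)1_D$ and $\mathbb{E}(w_i^* a)\to0$ in norm, and more generally that $\rho$-moments of words in $A$ and $w_i$ converge to the free ones. The computation reduces via freeness over $D$ to the terms $\mathbb{E}_1(v_i^*a_1v_i)$ and $\mathbb{E}_1(v_i^*a_1)$ for $a_1\in A_1$, which the hypotheses control. Two further points enter here: $\mathbb{E}_2(c^*dc)$ for $d\in D$ is handled by the $D$-bimodularity together with the scalar condition, and words containing letters of $A_2^\circ$ between copies of $c$ remain centred.

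\textbf{Constructing $T$.} Pass to the ultrapower and let $W=(w_i)_{\mathcal U}\in B^{\mathcal U}$, with the ultrafilter chosen on $I$ (cofinal). Next, tensor with $\ell^2$ and use a standard unitary dilation trick to produce an isometry $T$ on $(\mathcal H\otimes\ell^2)^{\mathcal U}$ such that $T+T^*=\pi^{\mathcal U}(W+W^*)\otimes1$ (after rescaling $c$ to be selfadjoint and semicircular-like). Alternatively, we can obtain $T$ from the Hilbert space geometry of the freeness: the map $\eta\mapsto$ the component of $W\eta$ in the orthocomplement of $\overline{A\xi}$. Commutation with $\id\otimes d$ is automatic, since everything acts as $\,\cdot\otimes1$. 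The relations $T^*aT=\rho(a)$ and $T^*a(\xi\otimes\eta)^{\mathcal U}=0$ are the ultralimit forms of the two estimates above.

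\textbf{Main obstacle.} The main obstacle is producing a genuine isometry $T$ with $T+T^*$ lying in the image of $B^{\mathcal U}$, since $W$ itself is not isometric. We would try first to route through condition (ii) instead. There we take $\theta\colon C\to B^{\mathcal U}$ given by $x\mapsto (v_ixv_i^*)_{\mathcal U}$, which is state-preserving by the first hypothesis applied to $x-\rho_2(x)$. We then take $p$ to be the projection onto the closure of the span of vectors $(\pi(v_i)\,\zeta_i)_{\mathcal U}$ with $\zeta_i$ ranging over alternating words starting in $A_2^\circ$, i.e.\ the image under $v_i$ of the ``words beginning with $C$'' subspace, tensored with $1_{\ell^2}$. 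Verifying the three compression identities then becomes exactly the two hypotheses on $(v_i)$ together with freeness, and commutation with $\id\otimes d$ is automatic.
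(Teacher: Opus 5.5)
Your primary route through assertion (iii) of Theorem~\ref{AbstractCriterion-completely-selfless} has a genuine gap, and the obstacle you flag cannot be overcome with the element you propose. Suppose $T$ is an isometry as in (iii). Then $T^*\zeta=0$ for $\zeta=(\xi\otimes\eta)^{\mathcal U}$, so $T+T^*$ has the semicircular law in the vector state of $\zeta$. That vector state implements $\rho^{\mathcal U}$ on $(\pi\otimes1)^{\mathcal U}(A^{\mathcal U})$. On the other hand, for $W=(v_icv_i^*)_{i\to\mathcal U}$, freeness gives $\mathbb E(v_ixv_i^*)=\rho_2(x)1_D$ for all $x\in C$, hence $\rho^{\mathcal U}\bigl((W+W^*)^n\bigr)=\rho_2\bigl((c+c^*)^n\bigr)$. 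Since your $T+T^*$ is prescribed to be the image of $W+W^*$, this forces $c+c^*$ to be semicircular in $(C,\rho_2|_C)$. That fails for general $C$ (e.g.\ $C\cong\mathbb C^2$), and no rescaling or dilation can help. More fundamentally, convergence of $\mathbb E$-moments of the $w_i$ gives no control over norms of mixed expressions. That norm control is exactly what the compression criterion supplies, via Hasegawa's universal property. Also, $\mathbb E_2(c^*dc)$ need not be scalar for $d\in D$: for $A_2=D\otimes C$ and $\mathbb E_2=\id_D\otimes\gamma$ it equals $\gamma(c^*c)\,d$.

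Your fallback through assertion (ii) is exactly the paper's proof, and you should make it the proof. Let $p_0$ be the projection of $L^2(A,\rho)$ onto the closed span $\mathcal F_2^\circ$ of reduced-word vectors whose first letter lies in $A_2^\circ$. The paper takes $p_i:=v_ip_0v_i^*$, $\theta_i:=v_i(\,\cdot\,)v_i^*$ on $C$, and $p:=(p_i\otimes1)_{i\to\mathcal U}$ for a cofinal ultrafilter $\mathcal U$ on $I$. Several points need fixing:

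\emph{The range of $p_0$.} It must be all of $\mathcal F_2^\circ$, not only words beginning in $C$ as your ``i.e.'' suggests; otherwise $p^\perp\theta(c)p^\perp=\rho_2(c)p^\perp$ can fail. With $\mathcal F_2^\circ$ the identity holds exactly for each $i$, because $p_0^\perp cp_0^\perp=\mathbb E_2(c)p_0^\perp$.

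\emph{State preservation of $\theta$.} This does not come from the first hypothesis, which concerns $A_1$ only. It follows from $p_i\Omega=0$ together with the identity above.

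\emph{GNS-faithfulness of $\rho$.} Deduce it from $\rho=\rho_1\circ\mathbb E_{A_1}$, since $\delta$ itself need not be GNS-faithful.

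\emph{The core estimate}, which you only assert. For $a\in A_1$ one has $p_iap_i=v_i\mathbb E_1(v_i^*av_i)p_0v_i^*$ and $p_ia\Omega=0$. Now take $x=a_0wa_r$ with $a_0,a_r\in A_1$ and $w=b_1a_1\cdots b_r$ an alternating centred word beginning and ending in $A_2^\circ$. Write $v_i^*a_0=\alpha_i+y_i$ and $a_rv_i=\beta_i+z_i$, with $\alpha_i=\mathbb E_1(v_i^*a_0)$, $\beta_i=\mathbb E_1(v_i^*a_r^*)^*$ and $y_i,z_i\in A_1^\circ$. Since $y_iwz_i$ maps $\mathcal F_2^\circ$ into $\mathcal F_1^\circ$ and $wa_r\Omega\in\mathcal F_2^\circ$, you get $p_0y_iwz_ip_0=0$ and $p_0v_i^*x\Omega=\alpha_iwa_r\Omega$. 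Hence $\|p_ixp_i\|$ and $\|p_ix\Omega\|$ are $O(\|\alpha_i\|+\|\beta_i\|)$, which tends to $0$ by the second hypothesis. Density and $\|p_i\|=1$ then give the exact relations in the ultrapower.
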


To showcase the scope of Theorem \ref{thm:amalgamated-free-product}, we apply it to the special case of reduced free products of \Cs-algebras where one free factor contains a Haar unitary belonging to the centraliser of the state. We note that by \cite[Theorem~5.4]{Thi24} this implies a generalisation of \cite[Theorem C]{HKER25} by removing the rapid decay condition and the von Neumann completion assumption therein.

\begin{corollary}\label{thm:free-product} Let $ C\subseteq(A,\rho)$ with $C\not\cong \mathbb C$ be an inclusion of \Cs-probability spaces, and let $(B,\psi)$ be a \Cs-probability space containing a Haar unitary $u \in B^\psi$, where $B^\psi$ is the {centraliser} of $\psi$. Then the inclusion $\Cs(C \cup \{u\})\subseteq (B,\psi)\star(A,\rho)$ is completely selfless.
\end{corollary}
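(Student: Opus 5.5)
We derive the corollary from Theorem~\ref{thm:amalgamated-free-product} with trivial amalgamation $D=\mathbb{C}1$, $\delta$ the unique state on $\mathbb{C}$, and roles assigned as $A_1:=B$, $\mathbb{E}_1:=\psi(\cdot)1$, $A_2:=A$, $\mathbb{E}_2:=\rho(\cdot)1$. The reduced amalgamated free product $(A_1,\mathbb{E}_1)\star_{\mathbb{C}}(A_2,\mathbb{E}_2)$ with state $\delta\circ(\mathbb{E}_1\star\mathbb{E}_2)$ is then the ordinary reduced free product $(B,\psi)\star(A,\rho)$ with the free product state $\psi\star\rho$. For the subalgebra of the second factor we take $C\subseteq(A,\rho)$ itself. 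The condition $\mathbb{E}_2(c)=\rho(c)1$ holds by definition of $\mathbb{E}_2$, and $C\not\cong\mathbb{C}$ gives non-triviality. We may need to check that $(C,\rho|_C)$ is non-trivial in the paper's sense, i.e.\ presumably that $\rho|_C$ is not a character. If the inclusion convention only guarantees $C\neq\mathbb{C}$, we pass to the GNS image, using the standing GNS-faithfulness of \Cs-probability subspaces.

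For the net of unitaries, take the sequence $v_n:=u^n$, $n\in\mathbb{N}$. Since $u$ is a Haar unitary, $\psi(u^k)=0$ for $k\neq0$. Since $u\in B^\psi$, we have $\psi(u^{-n}au^n)=\psi(a)$ for all $a\in B$. Hence $\mathbb{E}_1(v_n^*av_n)-\psi(a)1=0$ identically, and the first hypothesis holds trivially. The second hypothesis requires $\psi(u^{-n}a)\to0$ for every $a\in B$. By density and uniform boundedness it suffices to check this on a dense set. Write $\psi(u^{-n}a)=\langle u^n\xi_\psi,\pi_\psi(a)\xi_\psi\rangle$ in the GNS space. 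The vectors $u^n\xi_\psi$ are orthonormal, because $\langle u^n\xi_\psi,u^m\xi_\psi\rangle=\psi(u^{m-n})=\delta_{n,m}$. Bessel's inequality then gives $\sum_n|\psi(u^{-n}a)|^2\le\|\pi_\psi(a)\xi_\psi\|^2$, so these terms tend to $0$. This step is straightforward and needs no density argument.

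Theorem~\ref{thm:amalgamated-free-product} then gives complete selflessness of the inclusion $\Cs(C\cup\{u^n\mid n\in\mathbb{N}\})=\Cs(C\cup\{u\})\subseteq(B,\psi)\star(A,\rho)$, provided $\psi\star\rho$ restricted to $\Cs(C\cup\{u\})$ is GNS-faithful. I expect this faithfulness check to be the only real obstacle. The plan is to identify $\Cs(C\cup\{u\})$ with the reduced free product $(\Cs(u),\psi|)\star(C,\rho|_C)$ inside $(B,\psi)\star(A,\rho)$. This should follow from the standard fact that reduced free products of GNS-faithful subalgebras embed state-preservingly into the ambient reduced free product, with the GNS space of the subalgebra product sitting as a reducing subspace generated by the cyclic vector. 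Here $\psi|_{\Cs(u)}$ is the Haar trace on $C(\mathbb{T})$ and is faithful, and $\rho|_C$ is GNS-faithful by assumption. A reduced free product state is GNS-faithful on its reduced free product, since the GNS representation is exactly the defining free product representation. So the restriction is GNS-faithful and the hypothesis of the "moreover" part is satisfied, which completes the argument.
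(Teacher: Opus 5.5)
Your proposal is correct and follows the paper's proof essentially step for step. Both specialise Theorem~\ref{thm:amalgamated-free-product} with $D=\mathbb{C}$, $A_1=B$ and $v_n=u^n$, use the centraliser property for the first limit, use orthonormality of $(u^n\xi_\psi)_n$ for the second, and obtain GNS-faithfulness on $\Cs(C\cup\{u\})$ from the state-preserving embedding $(\Cs(u),\psi|_{\Cs(u)})\star(C,\rho|_C)\hookrightarrow(B,\psi)\star(A,\rho)$. Your side worry about non-triviality of $(C,\rho|_C)$ is moot: GNS-faithfulness of $\rho|_C$ is part of the paper's definition of an inclusion of \Cs-probability spaces, and non-triviality only means $C\not\cong\mathbb{C}$.
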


By applying Theorem~\ref{thm:amalgamated-free-product} and Corollary~\ref{thm:free-product}, we establish complete selflessness of free Araki--Woods \Cs-probability spaces whenever the underlying real Hilbert space has dimension
at least two; see Corollary~\ref{cor:free-araki-woods-selfless}. This strengthens the results of \cite[Theorem~7.6 and Corollary~7.7]{HKER25} and also covers the two-dimensional case associated with non-trivial orthogonal representations.  In particular, the corresponding \Cs-algebras are simple and are purely infinite whenever the orthogonal representation is non-trivial.
As a further application of Corollary \ref{thm:free-product}, we show in Theorem~\ref{thm:Hecke} that Hecke \Cs-algebras associated with free product groups $(\mathbb{Z}/2\mathbb Z)^{\star n}$ with $n\geq3$ are completely selfless if and only if they are simple. Consequently, the Elliott invariant of these \Cs-algebras is completely determined by its unordered version, which was computed in \cite{RS22}; see Remark~\ref{rem:ordered-K-theory}.

As a second application of Theorem \ref{MainTheorem}, we consider reduced group \Cs-algebras and crossed products of discrete groups. Recall that a discrete group is called \emph{(completely) \Cs-selfless} if the \Cs-probability space consisting of the corresponding reduced group \Cs-algebra and its canonical tracial state is (completely) selfless. We give a dynamical characterisation of (complete) \Cs-selflessness in terms of covariant representations of the action on the Bowditch
compactification of a Bass--Serre tree, and obtain completely selfless inclusions for suitable amalgamated free product groups involving
unconfined subgroups; see Corollary~\ref{cor:unconfined}. Secondly, for reduced crossed products, we establish relative selflessness with respect to the canonical conditional expectation whenever the acting group admits an axial sequence whose action on the coefficient
algebra converges pointwise in norm to the identity along a free ultrafilter. Notably, this extends \cite[Theorem~1]{Ozawa25} to the operator-valued setting.

Finally, as a third application of our main results, we establish selflessness of \Cs-probability spaces of $q$-Gaussian \Cs-algebras throughout the range $-1<q<1$. More precisely, for every real Hilbert space $\mathcal{H}_{\mathbb R}$ of dimension at least two, equipped with orthogonal unit vectors $\xi$, $\eta$, the inclusion $\Cs(s_q(\xi),s_q(\eta)) \subseteq(\mathcal A_q(\mathcal{H}_{\mathbb R}),\tau)$ is shown to be completely selfless, where $\tau$ is the vacuum trace; see Theorem~\ref{thm:q-gaussian-selfless}. This resolves Problem 12 asked in the Oberwolfach report \cite{OWR25}, and strengthens the simplicity and trace-uniqueness results of Amrutam, Jekel, and Wasilewski \cite{AmrutamJekelWasilewski26} (see also \cite[Theorem 4.1]{BCKW23}) by establishing complete selflessness, strict comparison, and stable rank one, with the same conclusions for all intermediate algebras. \\

\subsection*{Statement on the use of AI}
Theorems~\ref{MainTheorem},
\ref{AbstractCriterion-general}, and
\ref{AbstractCriterion-completely-selfless}, which provide
the main characterisations of selflessness, relative
selflessness, and complete selflessness (of inclusions) were obtained without
the use of AI.
All results in Section~\ref{sec:groups}, concerning the
characterisation of (completely) \Cs-selfless groups and
relative selflessness for crossed products, were likewise
proved without the use of AI.

Our original proof of the free product result,
Corollary~\ref{thm:free-product}, was obtained without AI
and subsequently generalised to
Theorem~\ref{thm:amalgamated-free-product} with the assistance
of ChatGPT 5.6.
The results concerning Hecke \Cs-algebras in
Section~\ref{sec:Hecke} were obtained without AI.
The result on $q$-Gaussian \Cs-algebras in
Section~\ref{sec:q-gaussian} was proved by ChatGPT 5.6;
the authors subsequently checked the proof carefully,
modified it, and improved its exposition.
Furthermore, ChatGPT 6.1 was used for proofreading and spell-checking.

All conversations with ChatGPT that contributed to the mathematical content of this
work in the ways described above took place before
6 October 2026. The authors take full responsibility for the mathematical
content of this work.

\subsection*{Acknowledgements}
This project originated during M.K.’s visit to the University of Southern Denmark in February 2026, and he is grateful for the warm hospitality received during his stay. M.P.\ was supported by the Carlsberg Foundation grant CF24-1144.

\vspace{3mm}

\renewcommand\thetheorem{\arabic{section}.\arabic{theorem}}


\section{Preliminaries and notation}

\subsection{General notation}

We write $\mathbb N=\{0,1,\ldots\}$, $\mathbb N_{\geq1}=\{1,2,\ldots\}$ and $\ell^2:=\ell ^2 (\mathbb{N})$. In a unital \Cs-algebra $A$, scalars are identified with scalar multiples of the identity $1_A$, and $\Cs(S)$ denotes the unital \Cs-subalgebra generated by a subset $S\subseteq A$. For a projection $p$, we write $p^\perp:=1-p$. Tensor products of \Cs-algebras are spatial unless otherwise specified. Furthermore, inner products on Hilbert spaces are assumed to be linear in the
second variable.

\subsection{\Cs-probability spaces}

A \emph{\Cs-probability space} is a pair $(A,\rho)$ consisting of a unital \Cs-algebra and a {GNS-faithful} state, meaning that the associated GNS-representation is faithful; this assumption is not imposed on ultrapower states. A probability space is \emph{non-trivial} if $A\not\cong\mathbb C$. Embeddings of \Cs-probability spaces are assumed to be unital and state-preserving. An \emph{inclusion of \Cs-probability spaces} $B\subseteq(A,\rho)$ is a unital inclusion of \Cs-algebras with $\rho|_B$ GNS-faithful.

A unitary $u\in A$ is a \emph{Haar unitary} if $\rho(u^n)=0$ for every $n\in\mathbb Z\setminus\{0\}$. The \emph{centraliser} of $\rho$ is defined as $A^\rho:=\{a\in A\mid\rho(ab)=\rho(ba)\text{ for all }b\in A\}$.

\subsection{Ultrapowers}

For an ultrafilter $\mathcal{U}$ on a non-empty set $I$, the \emph{(norm) ultrapower} of a \Cs-algebra $A$ is
\[
    A^{\mathcal{U}}:=\frac{\ell^\infty(I,A)}{\{(a_i)_{i\in I}\in\ell^\infty(I,A)\mid \lim_{i\to\mathcal{U}}\|a_i\|=0\}}.
\]
We denote the class of an element $(a_i)_{i\in I} \in \ell^\infty(I,A)$ by $(a_i)_{i\to\mathcal{U}}$; the corresponding norm is given by $\|(a_i)_{i\to\mathcal U}\|=\lim_{i\to\mathcal U}\|a_i\|$. The diagonal $\ast$-embedding $\Delta\colon A\hookrightarrow
A^{\mathcal U}$ maps $a\in A$ to the class of the constant family $(a)_{i\in I}$, and we identify $A$ with its image
$\Delta(A)$. If $A$ is unital, then $A^{\mathcal U}$ is a unital \Cs-algebra, and $\Delta$ is a unital $\ast$-embedding.

Any state $\rho$ on $A$ induces a state $\rho^{\mathcal U}$ on $A^{\mathcal U}$ given by
$\rho^{\mathcal{U}}((a_i)_{i\rightarrow \mathcal{U}}) :=\lim_{i\to\mathcal{U}}\rho(a_i)$.
More generally, for unital \Cs-algebras $A$ and $B$, a bounded linear map $T\colon A\to B$ induces $T^{\mathcal{U}}\colon A^{\mathcal{U}}\to B^{\mathcal{U}}$ by applying $T$ coordinatewise. 

The \emph{Hilbert space ultrapower} $\mathcal H^{\mathcal{U}}$ is again a Hilbert space with inner product
\[
    \langle (\xi_i)_{i \rightarrow \mathcal{U}},(\eta_i)_{i \rightarrow \mathcal{U}}\rangle :=\lim_{i\to\mathcal{U}} \,\langle\xi_i,\eta_i\rangle.
\]
For a vector $\xi\in\mathcal H$, we denote the class of the constant family by $\xi^{\mathcal{U}}$, or simply by $\xi$ if the ambient space is clear from context. A bounded family of operators $(T_i)_{i\in I}\subseteq\mathcal B(\mathcal H,\mathcal K)$ induces an operator from $\mathcal H^{\mathcal U}$ to $\mathcal K^{\mathcal U}$ by $(\xi_i)_{i\to\mathcal U} \mapsto (T_i\xi_i)_{i\to\mathcal U}$. This gives a canonical isometric linear embedding $\mathcal B(\mathcal H,\mathcal K)^{\mathcal U} \hookrightarrow \mathcal B(\mathcal H^{\mathcal U},\mathcal K^{\mathcal U})$; when $\mathcal H=\mathcal K$, this is a unital
$\ast$-embedding.
For a fixed operator $T \in \mathcal{B}(\mathcal{H}, \mathcal{K})$, the induced operator is denoted by $T^{\mathcal U}$. The construction preserves adjoints and isometries. A faithful $\ast$-representation $\pi\colon A\to\mathcal{B}(\mathcal H)$ induces a faithful representation of $A^{\mathcal{U}}$ on $\mathcal H^{\mathcal{U}}$.

For an ultrafilter $\mathcal{W}$ on $J$, our convention for the \emph{Fubini ultrafilter} $\mathcal{U}\otimes\mathcal{W}$ on $I\times J$ is
\[
    S\in\mathcal{U}\otimes\mathcal{W} \quad\Longleftrightarrow\quad \{j\in J\mid\{i\in I\mid(i,j)\in S\}\in\mathcal{U}\} \in\mathcal{W}.
\]
Thus $(A^{\mathcal{U}})^{\mathcal{W}}$ identifies canonically with $A^{\mathcal{U}\otimes\mathcal{W}}$, and likewise for Hilbert space ultrapowers.

If $I$ is directed, an ultrafilter $\mathcal U$ on $I$ is
called \emph{cofinal} if
$\{j\in I\mid j\geq i\}$ belongs to $\mathcal{U}$ for every $i\in I$. Thus convergence of a net indexed by $I$ implies convergence along any cofinal ultrafilter on $I$.


\subsection{Hilbert \Cs-modules}

For the general theory of Hilbert \Cs-modules we refer to \cite{Lance95}. By convention, Hilbert \Cs-modules are right-modules and the inner product is linear in the right variable. We  denote Hilbert \Cs-modules by gothic letters, while ordinary Hilbert spaces are denoted by calligraphic letters. Furthermore, given a \Cs-algebra $A$ and Hilbert $A$-modules $\mathfrak{X}$ and $\mathfrak{Y}$, we write $\mathcal{B}(\mathfrak{X},\mathfrak{Y})$ for the set of adjointable operators from $\mathfrak{X}$ to $\mathfrak{Y}$ and define $\mathcal{B}(\mathfrak{X}):=\mathcal{B}(\mathfrak{X},\mathfrak{X})$.

A \emph{(faithful) $A$-$B$ \Cs-correspondence} of two \Cs-algebras $A$ and $B$ is a pair $(\mathfrak{X},\phi_{\mathfrak{X}})$ consisting of a Hilbert $B$-module $\mathfrak{X}$ and a (faithful) $\ast$-representation $\phi_{\mathfrak{X}}\colon A\rightarrow\mathcal{B}(\mathfrak{X})$; it is called a \emph{(faithful) \Cs-correspondence} over $A$ if $A=B$. If $(\mathfrak{X},\phi_{\mathfrak{X}})$ and $(\mathfrak{Y},\phi_{\mathfrak{Y}})$ are respectively $A$-$B$ and $B$-$C$ \Cs-correspondences, the \emph{interior tensor product} $\mathfrak{X}\otimes_{B}\mathfrak{Y}$ obtained from $\mathfrak{X}\odot\mathfrak{Y}$ by separation and completion with respect to the semi-inner product $\langle\xi_{1}\otimes\eta_{1},\xi_{2}\otimes\eta_{2}\rangle:=\langle\eta_{1},\phi_{\mathfrak{Y}}(\langle\xi_{1},\xi_{2}\rangle)\eta_{2}\rangle$ for $\xi_{1},\xi_{2}\in\mathfrak{X}$, $\eta_{1},\eta_{2}\in\mathfrak{Y}$ is itself a Hilbert $C$-module. The canonical $\ast$-homomorphism $\mathcal{B}(\mathfrak{X})\rightarrow\mathcal{B}(\mathfrak{X}\otimes_{B}\mathfrak{Y})$ is faithful whenever $\phi_{\mathfrak{Y}}$ is faithful; in this case $\mathfrak{X}\otimes_{B}\mathfrak{Y}$ is an $A$-$C$ \Cs-correspondence.

A \emph{conditional expectation} $\mathbb{E}\colon A\to D$ onto a unital \Cs-subalgebra is a unital completely positive $D$-bimodule map which restricts to the identity on $D$. For a unital inclusion $D\subseteq A$ of \Cs-algebras admitting a conditional expectation $\mathbb{E}\colon A\rightarrow D$, we denote the Hilbert $D$-module obtained from $A$ with respect to the $D$-valued semi-inner product $\langle a,b\rangle:=\mathbb{E}(a^{\ast}b)$ by $L^{2}(A,\mathbb{E})$. Left multiplication by elements in $A$ defines a $\ast$-representation $\pi_{\mathbb{E}}\colon A\rightarrow\mathcal{B}(L^{2}(A,\mathbb{E}))$ and the vector $\xi_{\mathbb{E}}\in L^{2}(A,\mathbb{E})$ corresponding to the identity in $A$ satisfies $\mathbb{E}(a)=\langle\xi_{\mathbb{E}},\pi_{\mathbb{E}}(a)\xi_{\mathbb{E}}\rangle$ for every $a\in A$. The triple $(\pi_{\mathbb{E}},L^{2}(A,\mathbb{E}),\xi_{\mathbb{E}})$ is called the \emph{GNS-representation} of the pair $(A,\mathbb{E})$. The conditional expectation $\mathbb{E}$ is called \emph{non-degenerate} or \emph{GNS-faithful} if $\pi_{\mathbb{E}}$ is faithful.


\subsection{Amalgamated free products and Hasegawa's construction} \label{Subsec:amalgamated}

Let $D\subseteq A_{1},A_{2}$ be unital inclusions of \Cs-algebras and assume that there exist non-degenerate conditional expectations $\mathbb{E}_{1}\colon A_{1}\rightarrow D$ and $\mathbb{E}_{2}\colon A_{2}\rightarrow D$; denote the corresponding GNS-representations by $(\pi_{1},\mathfrak{X}_{1},\xi_{1})$ and $(\pi_{2},\mathfrak{X}_{2},\xi_{2})$ respectively and write $A_{1}^{\circ}:=\ker(\mathbb{E}_{1})$, $A_{2}^{\circ}:=\ker(\mathbb{E}_{2})$. The reduced amalgamated free product
\[
A:=(A_{1},\mathbb{E}_{1})\star_{D}(A_{2},\mathbb{E}_{2})
\]
of $(A_{1},\mathbb{E}_{1})$ and $(A_{2},\mathbb{E}_{2})$ is characterised, up to a canonical expectation-preserving
$\ast$-isomorphism fixing the two factors, as the unital
\Cs-algebra generated by embedded copies of $A_{1},A_{2}\subseteq A$ which are compatible on $D$, admitting a non-degenerate conditional expectation $\mathbb{E}:=\mathbb{E}_{1}\star\mathbb{E}_{2}\colon A\rightarrow D$ with $\mathbb{E}|_{A_{1}}=\mathbb{E}_{1}$, $\mathbb{E}|_{A_{2}}=\mathbb{E}_{2}$, and $\mathbb{E}(a_{1}\cdots a_{n})=0$ for $a_{1}\in A_{i_{1}}^{\circ},\ldots,a_{n}\in A_{i_{n}}^{\circ}$ with $i_{1}\neq\cdots\neq i_{n}$; for details see for instance \cite[Section 4.7]{BrownOzawa08}. Products $a_1\cdots a_n$ satisfying the preceding centering and alternating-index conditions are called \emph{reduced words}. Together with $D$, they linearly span a norm-dense subspace of $A$.

The Hilbert $D$-module of the associated GNS-representation of the pair $(A,\mathbb{E})$ is naturally identified with the \emph{free product Hilbert $D$-module}
\[
\mathfrak{X}:=\Omega D\oplus\bigoplus_{n\geq1}\bigoplus_{i_{1}\neq\cdots\neq i_{n}}\mathfrak{X}_{i_{1}}^{\circ}\otimes_{D}\cdots\otimes_{D}\mathfrak{X}_{i_{n}}^{\circ},
\]
where the \emph{vacuum vector} $\Omega$ corresponds to the canonical cyclic vector, and where $\mathfrak{X}_{1}^{\circ}:=\mathfrak{X}_{1}\ominus\xi_{1}D$, $\mathfrak{X}_{2}^{\circ}:=\mathfrak{X}_{2}\ominus\xi_{2}D$. We usually suppress the representation $\pi_{\mathbb{E}}$ in our notation and view $A$ as a \Cs-subalgebra of $\mathcal{B}(\mathfrak{X})$.

In \cite{Hasegawa18}, Hasegawa defined a \Cs-algebra $\Delta\mathbf{T}(A,\mathbb{E})$ that contains a canonical copy of $A$. By \cite[Corollary~4.2.7]{Hasegawa18} this \Cs-algebra can be characterised as the universal unital \Cs-algebra generated by a unital copy of the algebraic amalgamated free product of $A_1$ and $A_2$ over $D$, and a projection $p$, subject to the relations
\[
p a_1 p=\mathbb{E}_1(a_1)p,\qquad p^\perp a_2 p^\perp=\mathbb{E}_2(a_2)p^\perp
\]
for all $a_1\in A_1$, $a_2 \in A_2$. The \Cs-subalgebra generated by the copies of $A_1$ and $A_2$ inside $\Delta \mathbf{T}(A,\mathbb{E})$ is canonically isomorphic to $A$.

In the case of reduced group \Cs-algebras of amalgamated free products of discrete groups, Hasegawa's construction identifies with a crossed product \Cs-algebras naturally arising in the context of Bass–Serre trees. Indeed, let $\Gamma=\Gamma_1\star_{\Lambda}\Gamma_2$ be an amalgamated free product of discrete groups $\Lambda \leq \Gamma_1,\Gamma _2$, let $A_1=C_\text{r}^{\ast}(\Gamma_1)$, $A_2=C_\text{r}^{\ast}(\Gamma_2)$ and $D=C_{\text{r}}^{\ast}(\Lambda)$ be the corresponding reduced group \Cs-algebras, and let $\mathbb{E}_1\colon A_1\to D$, $\mathbb{E}_2\colon A_2\to D$ be the canonical conditional expectations. Then $A\cong C_\text{r}^{\ast}(\Gamma)$ and by \cite[Proposition~3.1.1]{Hasegawa18} we have that $\Delta\mathbf{T}(A,\mathbb{E}) \cong C(\Delta T)\rtimes_\text{r}\Gamma$, where $T$ is the Bass--Serre tree of the amalgam and $\Delta T=V(T)\sqcup\partial T$ is its \emph{Bowditch compactification}, equipped with the natural $\Gamma$-action. This isomorphism identifies the canonical copy of $A$ with the canonical copy of $C_\text{r}^{\ast}(\Gamma)$ in the crossed product. For details, see \cite[Section 3.1]{Hasegawa18}.


\subsection{Existential embeddings and selflessness}

A state-preserving $\ast$-embedding $\iota\colon(A,\rho)\hookrightarrow (D,\delta)$ of \Cs-probability spaces is called \emph{existential} if there exist an ultrafilter $\mathcal{U}$ and a state-preserving embedding $\sigma\colon(D,\delta)\hookrightarrow(A^{\mathcal{U}},\rho^{\mathcal{U}})$ such that $\sigma\circ\iota=\Delta$; see \cite{Rob25}. For an embedding of inclusions of \Cs-probability spaces
$\iota\colon(B\subseteq(A,\rho))
\hookrightarrow(C\subseteq(D,\delta))$,
we additionally require $\sigma(C)\subseteq B^{\mathcal U}$;
see \cite{HKEPR25}.

An inclusion of \Cs-probability spaces
$B\subseteq(A,\rho)$ is \emph{selfless} if there exist a non-trivial \Cs-probability space $(C,\gamma)$, an ultrafilter $\mathcal{U}$, and a state-preserving embedding $\Phi\colon(A,\rho)\star(C,\gamma) \hookrightarrow(A^{\mathcal{U}},\rho^{\mathcal{U}})$ such that $\Phi|_A=\Delta$ and $\Phi(C)\subseteq B^{\mathcal{U}}$. The \Cs-probability space $(A,\rho)$ is called \emph{selfless} if the inclusion $A\subseteq(A,\rho)$ is selfless. Every unital intermediate \Cs-algebra of a selfless inclusion is selfless \cite{HKEPR25}. Ultrafilters in these definitions may be taken free, allowing arbitrary index sets. 

Following \cite[Section~6]{Ozawa25}, a \Cs-probability space $(A,\rho)$ is called
\emph{completely selfless} if there exist a non-trivial
\Cs-probability space $(C,\gamma)$, an ultrafilter
$\mathcal U$, and a state-preserving $\ast$-embedding
$\Phi\colon(A,\rho)\star(C,\gamma)
\hookrightarrow(A^{\mathcal U},\rho^{\mathcal U})$ with $\Phi|_A=\Delta$ such that, for every \Cs-algebra $R$, the induced algebraic
$\ast$-homomorphism 
$((A,\rho)\star(C,\gamma))\odot R
\rightarrow(A\otimes R)^{\mathcal U}$ given by $x\otimes r\mapsto(a_i\otimes r)_{i\to\mathcal U}$ with $\Phi(x)=(a_i)_{i\to\mathcal U}$ is continuous for the minimal tensor norm. The first free factor embedding is, in this case, said to be \emph{completely existential}. In the definition above, if $\Phi(C)\subseteq B^{\mathcal U}$ for a \Cs-probability subspace $B\subseteq(A,\rho)$, we say that the inclusion is \emph{completely selfless}. Clearly, in this case, every intermediate \Cs-probability space is completely selfless.

\vspace{3mm}


\section{A new characterisation of selflessness}

The present section is devoted to providing the proofs of our characterisation results Theorem \ref{MainTheorem}, Theorem \ref{AbstractCriterion-general}, and Theorem \ref{AbstractCriterion-completely-selfless}. We divide the proof of Theorem \ref{MainTheorem} in two parts: Proposition \ref{Direction1} and Proposition \ref{Direction2}. The first part makes use of Hadwin's generalisation of Voiculescu's theorem from \cite{Had81}.

\begin{theorem}[see~{\cite[Theorem 3.14]{Had81}}] \label{HadwinTheorem} Let $A$ be a unital \Cs-algebra, and let $\pi\colon A\rightarrow\mathcal{B}(\mathcal{H})$ and $\rho\colon A\rightarrow\mathcal{B}(\mathcal{K})$ be unital $\ast$-representations on Hilbert spaces $\mathcal{H}$ and $\mathcal{K}$. Then the following statements are equivalent:

\begin{enumerate}[label=\textup{(\roman*)},leftmargin=*]
\item The representations are \emph{approximately unitarily equivalent}, i.e., there exists a net $(u_{\lambda})_{\lambda\in\Lambda}\subseteq\mathcal{B}(\mathcal{H},\mathcal{K})$ of unitaries such that $\lim_{\lambda\in\Lambda}\Vert u_{\lambda}\pi(a)u_{\lambda}^{\ast}-\rho(a)\Vert=0$ for every $a\in A$.
\item For every $a\in A$ the equality $\rank(\pi(a))=\rank(\rho(a))$ holds.
\end{enumerate}
Here $\rank(\pi(a))$ denotes the dimension of the closure of $\im (\pi(a))$ in the Hilbert space $\mathcal{H}$, and similarly for $\rank(\rho(a))$.
\end{theorem}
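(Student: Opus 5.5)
The plan is the standard route to Hadwin's theorem: the implication (i)$\Rightarrow$(ii) is elementary, and (ii)$\Rightarrow$(i) comes from combining a non-separable Voiculescu-type absorption theorem with a rank-counting argument on the ideal of ``small'' operators.

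\emph{Step (i)$\Rightarrow$(ii).} Fix $a\in A$ and reduce to positive elements via $\rank(\pi(a))=\rank(\pi(a^*a))$. For $a\geq0$ and $\varepsilon>0$, let $f_\varepsilon$ be continuous, vanishing on $[0,\varepsilon/2]$ and equal to $1$ on $[\varepsilon,\infty)$. Norm closeness $\|u_\lambda\pi(f_\varepsilon(a))u_\lambda^*-\rho(f_\varepsilon(a))\|$ small, together with functional calculus, gives that the spectral projections $1_{[\varepsilon,\infty)}(\pi(a))$ and $1_{[\varepsilon,\infty)}(\rho(a))$ are dominated, up to unitary conjugation, by spectral projections of the other operator at slightly smaller thresholds. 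Hence their ranks agree at every threshold. Taking the supremum over $\varepsilon\to0$ yields $\rank\pi(a)=\rank\rho(a)$, using cardinal arithmetic when the ranks are infinite.

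\emph{Step (ii)$\Rightarrow$(i).} This is the substantive direction and proceeds in four stages.
\begin{enumerate}
\item For each infinite cardinal $\kappa$, consider the closed ideal $J_\kappa$ of $A$ of elements $a$ for which the spectral projections $1_{[\varepsilon,\infty)}(\pi(a^*a))$ all have rank $<\kappa$. By (ii), this ideal is the same for $\rho$.
\item Decompose each representation according to the restriction to these ideals: the part where $\pi|_{J_\kappa}$ is non-degenerate versus the part that vanishes on $J_\kappa$. This mirrors the separable proof, where compacts play the role of $J_{\aleph_0}$.
\item Apply Hadwin's non-separable extension of Voiculescu's theorem: if $\sigma$ is a representation vanishing on the appropriate ideal and has rank no larger than $\pi$, then $\pi\oplus\sigma\sim_{a.u.}\pi$. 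Using (ii), this shows that $\pi$ and $\rho$ both absorb each other's ``large'' parts.
\item The ``small'' parts, which are supported on ideals of rank below $\kappa$, are direct sums of irreducible subrepresentations. Equality of ranks forces equal multiplicities of each such irreducible, so these parts are actually unitarily equivalent.
\end{enumerate}
Combining stages 3 and 4 gives $\pi\sim_{a.u.}\pi\oplus\rho\sim_{a.u.}\rho$, with approximating unitaries indexed by a net of finite sets of $A$ and tolerances.

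\emph{Main obstacle.} The hard step is the non-separable absorption in stage 3. The quasicentral approximate unit argument must be run transfinitely, decomposing $\mathcal H$ into a direct sum of $\kappa$-dimensional reducing subspaces for a dense set of $A$, and the multiplicities must be tracked cardinal by cardinal. Since the paper treats this as a citation of \cite[Theorem 3.14]{Had81}, I would not reprove it. In the proofs below, only the easy consequence (ii)$\Rightarrow$(i) is needed, in the form: representations with equal ranks, such as faithful representations on spaces of large equal dimension with no small ideal, are approximately unitarily equivalent.
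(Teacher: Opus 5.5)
\textbf{Verdict: the proposal has genuine gaps.} The paper does not prove this statement. It is quoted from Hadwin, and only (ii)$\Rightarrow$(i) is used, in the situation of Claims~1 and~2 in the proof of Proposition~\ref{Direction1}: there every non-zero element has rank equal to the uncountable dimension $\kappa$ of both spaces. Your stage~3 also delegates the hard step to Hadwin, so you are effectively citing as well. The problem is that the scaffolding you build around that citation contains two steps that fail as stated.

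\emph{Stage 4 fails for uncountable $\kappa$.} Take $A=C[0,1]$, let $\pi$ be the multiplication representation on $L^2[0,1]$, and let $\rho:=\pi\oplus\mathrm{ev}_{1/2}$ on $L^2[0,1]\oplus\mathbb C$. Every non-zero element has rank $\aleph_0$ under both, and $\pi\sim_{\mathrm{a.u.}}\rho$ by Voiculescu. Yet $J_{\aleph_1}=A$, so your ``small parts'' at level $\aleph_1$ are $\pi$ and $\rho$ themselves. Here $\pi$ has no irreducible subrepresentation (no eigenvectors), and $\pi\not\cong\rho$. The splitting into irreducibles with rank-determined multiplicities relies on $\pi(J_{\aleph_0})$ consisting of compact operators. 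It has no analogue at higher cardinals, where only approximate equivalence is available.

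\emph{The concluding chain fails.} The chain $\pi\sim_{\mathrm{a.u.}}\pi\oplus\rho\sim_{\mathrm{a.u.}}\rho$ is false as soon as there is a compact part. For $\pi=\rho=\id_{M_n}$ one gets $\rank((\pi\oplus\rho)(1))=2n\neq n$. Absorption is only possible for summands vanishing on $J:=\pi^{-1}(\mathbb K)=\rho^{-1}(\mathbb K)$, where $\mathbb K$ denotes the compact operators.

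\emph{Repair.} Split $\pi=\pi_c\oplus\pi_e$ and $\rho=\rho_c\oplus\rho_e$ along the essential subspaces of $J$. Then $\pi_c\cong\rho_c$: both are multiples of the irreducible representations of the compact-operator algebra $J/(J\cap\ker\pi)$. Each multiplicity is the rank of an element of $J$ mapping to a minimal projection of one summand, so it is determined by the ranks. Stage~3 applies to $\rho_e$ relative to $\pi$, and to $\pi_e$ relative to $\rho$. This gives $\pi\sim_{\mathrm{a.u.}}\pi\oplus\rho_e\cong\rho\oplus\pi_e\sim_{\mathrm{a.u.}}\rho$. After this repair only the level $\aleph_0$ is used, so the family $(J_\kappa)_\kappa$ plays no role outside the black box.

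\emph{Minor slip in (i)$\Rightarrow$(ii).} Ranks need not agree at a fixed threshold. In the example above, $a(t)=\max(0,\tfrac14-|t-\tfrac12|)$ gives $\rank 1_{[1/4,\infty)}(\pi(a))=0$ while $\rank 1_{[1/4,\infty)}(\rho(a))=1$. What your estimate actually yields is the interleaving $\rank 1_{[\varepsilon,\infty)}(\pi(a))\le\rank 1_{(\varepsilon/2,\infty)}(\rho(a))$ and its mirror. That suffices after letting $\varepsilon\to0$.

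\emph{A simpler route for the case the paper needs.} No transfinite quasicentral argument is required there.
\begin{enumerate}
\item Approximate equivalence is tested on finite sets, so restrict to a separable unital $A_0\subseteq A$.
\item Split $\pi|_{A_0}$ into $\kappa$ cyclic, hence separable, summands $\pi_i$.
\item Fix a dense sequence $(b_n)$ in $A_0\setminus\{0\}$. Apply the rank hypothesis to $g(b_n^*b_n)$, with $g\ge0$ vanishing on $[0,\|b_n\|^2/4]$ and $g(\|b_n\|^2)>0$. Since each summand contributes rank at most $\aleph_0<\kappa$, this shows $\|\pi_i(b_n)\|>\|b_n\|/2$ for $\kappa$ many $i$.
\item A bookkeeping recursion of length $\kappa$ regroups the summands into $\kappa$ countable blocks, each meeting all these index sets infinitely often. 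Every block is then a faithful representation of $A_0$ on a separable space whose image contains no non-zero compact operator.
\item By the separable Voiculescu theorem, all blocks, for $\pi$ and for $\rho$ alike, are approximately equivalent to one fixed $\tau$. Hence $\pi|_{A_0}\sim_{\mathrm{a.u.}}\tau^{(\kappa)}\sim_{\mathrm{a.u.}}\rho|_{A_0}$.
\end{enumerate}
This is a self-contained proof of exactly what Proposition~\ref{Direction1} and the proof of Theorem~\ref{AbstractCriterion-completely-selfless} use.
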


Using Theorem \ref{HadwinTheorem}, we prove the following proposition, which yields the forward implication of Theorem \ref{MainTheorem}.

\begin{proposition} \label{Direction1} Let $D\subseteq A_{1},A_{2}$ be unital inclusions of \Cs-algebras admitting non-degenerate conditional expectations $\mathbb{E}_{1}\colon A_{1}\to D$ and $\mathbb{E}_{2}\colon A_{2}\to D$. Furthermore, let $B\subseteq A_{1}$ be another unital inclusion of \Cs-algebras, and let $\rho\colon D\hookrightarrow\mathcal{B}(\mathcal{K})$ be a faithful unital $\ast$-representation. Assume that there exist an ultrafilter $\mathcal{U}$ and a unital $\ast$-embedding $\theta\colon(A_{1},\mathbb{E}_{1})\star_{D}(A_{2},\mathbb{E}_{2})\hookrightarrow A_{1}^{\mathcal{U}}$ such that
\[
\theta|_{A_{1}}=\Delta,\quad\mathbb{E}_{1}^{\mathcal{U}}\circ\theta=\Delta\circ(\mathbb{E}_{1}\star\mathbb{E}_{2}),\quad\theta(A_{2})\subseteq B^{\mathcal{U}},
\]
where $\Delta$ denotes the diagonal embedding. Then there exists a Hilbert space $\mathcal{H}$, an ultrafilter $\mathcal{V}$, unital $\ast$-embeddings $\pi_{1}\colon A_{1}\hookrightarrow\mathcal{B}(\mathcal{H})$ and $\pi_{2}\colon A_{2}\hookrightarrow B^{\mathcal{V}}\subseteq A_{1}^{\mathcal{V}}$, an isometry $V\colon\mathcal{K}\to\mathcal{H}$, and a projection $p\in\mathcal{B}(\mathcal{H}^{\mathcal{V}})$ satisfying the following conditions:

\begin{enumerate}[label=\textup{(\alph*)},leftmargin=*]
\item The embedding $\pi_{2}$ restricts to $\Delta|_{D}$ on $D$.
\item For every $a\in A_{1}$ the identity $V^{\ast}\pi_{1}(a)V=(\rho\circ\mathbb{E}_{1})(a)$ holds.
\item For every $a_{1}\in A_{1}$ and $a_{2}\in A_{2}$,
\begin{align*}
p\,\bigl((\pi_{1}^{\mathcal{V}}\circ\Delta)(a_{1})\bigr)\,p & = \bigl((\pi_{1}^{\mathcal{V}}\circ\Delta\circ\mathbb{E}_{1})(a_{1})\bigr)\,p, \\
p^{\perp}\,\bigl((\pi_{1}^{\mathcal{V}}\circ\pi_{2})(a_{2})\bigr) \,p^{\perp} & = \bigl((\pi_{1}^{\mathcal{V}}\circ\Delta\circ\mathbb{E}_{2})(a_{2})\bigr) \,p^{\perp}, \\
p\,\bigl((\pi_{1}^{\mathcal{V}}\circ\Delta)(a_{1})\bigr) \,V^{\mathcal{V}} &= 0,
\end{align*}
where $V^{\mathcal{V}}\colon\mathcal{K}^{\mathcal{V}}\to\mathcal{H}^{\mathcal{V}}$ is the isometry induced by $V$.
\end{enumerate}
\end{proposition}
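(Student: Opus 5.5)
The plan is to build the projection first in the canonical free product representation, where the three compression relations in (c) hold exactly, and then transport it into $\mathcal{H}^{\mathcal{V}}$ using $\theta$ together with Theorem~\ref{HadwinTheorem}. Write $A:=(A_1,\mathbb{E}_1)\star_D(A_2,\mathbb{E}_2)$, $\mathbb{E}:=\mathbb{E}_1\star\mathbb{E}_2$, and let $\mathfrak{X}$ be the free product Hilbert $D$-module. Put $\mathcal{H}_0:=\mathfrak{X}\otimes_D\mathcal{K}$, where $D$ acts on $\mathcal{K}$ via $\rho$. Let $\sigma\colon A\to\mathcal{B}(\mathcal{H}_0)$ be the induced representation, which is faithful since $\rho$ and $\mathbb{E}$ are faithful. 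Let $V_0\colon\mathcal{K}\to\mathcal{H}_0$, $k\mapsto\Omega\otimes k$, so that $V_0^*\sigma(x)V_0=\rho(\mathbb{E}(x))$ and $\sigma(A)V_0\mathcal{K}$ is dense in $\mathcal{H}_0$.

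Let $P\in\mathcal{B}(\mathcal{H}_0)$ be the projection onto the closed span of the tensors $\mathfrak{X}^\circ_{i_1}\otimes_D\cdots\otimes_D\mathfrak{X}^\circ_{i_n}\otimes_D\mathcal{K}$ with $i_1=2$. These are words starting in the second factor; $\Omega$ is excluded. The three relations follow from a direct check:
\begin{itemize}
\item On a word starting in $\mathfrak{X}_2^\circ$, an element $a_1\in A_1$ acts by $\mathbb{E}_1(a_1)$ plus a part starting in $\mathfrak{X}_1^\circ$, so $Pa_1P=\mathbb{E}_1(a_1)P$.
\item $P^\perp$ projects onto $\Omega\otimes\mathcal{K}$ together with the words starting in $\mathfrak{X}_1^\circ$, and symmetrically $P^\perp a_2P^\perp=\mathbb{E}_2(a_2)P^\perp$.
\item $a_1\Omega\in\Omega D\oplus\mathfrak{X}_1^\circ$, so $P\sigma(a_1)V_0=0$.
\end{itemize}

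Next choose the data. Fix a faithful representation $\tilde\pi$ of $A_1$ on some $\mathcal{L}$ and a large cardinal $\kappa$. Put $\mathcal{H}:=\mathcal{H}_0\oplus\mathcal{L}^{(\kappa)}$ and $\pi_1:=\sigma|_{A_1}\oplus\tilde\pi^{(\kappa)}$, and let $V:=V_0$ viewed as a map into $\mathcal{H}$. Then (b) holds because $V_0^*\sigma(a_1)V_0=\rho(\mathbb{E}_1(a_1))$. Consider the representation $\alpha:=\pi_1^{\mathcal{U}}\circ\theta$ of $A$ on $\mathcal{H}^{\mathcal{U}}$; it restricts to $\pi_1^{\mathcal{U}}\circ\Delta$ on $A_1$. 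The hypothesis $\mathbb{E}_1^{\mathcal{U}}\circ\theta=\Delta\circ\mathbb{E}$ gives
\[
(V^{\mathcal{U}})^*\alpha(x)V^{\mathcal{U}}=\rho^{\mathcal{U}}\bigl(\mathbb{E}_1^{\mathcal{U}}(\theta(x))\bigr)=V_0^*\sigma(x)V_0,
\]
using that $\rho(\mathbb{E}(x))$ sits diagonally in $\mathcal{B}(\mathcal{K}^{\mathcal{U}})$. Restricted to the vectors of $\Delta(\mathcal{K})$ this says the $\mathcal{K}$-valued vector functionals of $\alpha$ and $\sigma$ agree, so by uniqueness of the minimal Stinespring dilation, $W\colon \sigma(x)V_0k\mapsto\alpha(x)V^{\mathcal{U}}k$ is well defined and isometric. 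Thus $\alpha\cong\sigma\oplus\alpha'$, with $V^{\mathcal{U}}$ on the $\mathcal{K}$-part corresponding to $V_0$. (On the complement of $\Delta(\mathcal{K})$ in $\mathcal{K}^{\mathcal{U}}$ one only needs $P$-type relations, which come out of the same argument applied to all of $V^{\mathcal{U}}$; if this causes friction, first enlarge $\mathcal{U}$ so that this point is handled by the same transport.)

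It remains to produce a projection on the complement $\alpha'$. Since $\alpha'$ contains the amplification $(\tilde\pi^{(\kappa)})^{\mathcal{U}}\circ\theta$, which is faithful with huge multiplicity, we can take $\lambda\geq\dim\mathcal{H}^{\mathcal{U}}$ and arrange $\operatorname{rank}\alpha'(x)=\operatorname{rank}\sigma^{(\lambda)}(x)=\lambda$ for every $x\neq0$. This is the step I expect to be the main obstacle: making these cardinal rank counts match exactly. If direct control fails, the fallback is to compare $\alpha'\oplus\sigma^{(\lambda)}$ with $\sigma^{(\lambda)}$ instead. Theorem~\ref{HadwinTheorem} then gives unitaries $w_\mu\colon\mathcal{H}_0^{(\lambda)}\to\mathcal{H}'$ with $w_\mu\sigma^{(\lambda)}(x)w_\mu^*\to\alpha'(x)$ for all $x\in A$, indexed by some directed set $M$. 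Take a cofinal ultrafilter $\mathcal{W}$ on $M$, set $\mathcal{V}:=\mathcal{U}\otimes\mathcal{W}$, and identify $(\mathcal{H}^{\mathcal{U}})^{\mathcal{W}}=\mathcal{H}^{\mathcal{V}}$. Define
\[
p:=\bigl(W P W^*\oplus w_\mu P^{(\lambda)}w_\mu^*\bigr)_{\mu\to\mathcal{W}},
\]
and let $\pi_2$ be $\theta|_{A_2}\colon A_2\to B^{\mathcal{U}}$ followed by the diagonal embedding into $(B^{\mathcal{U}})^{\mathcal{W}}=B^{\mathcal{V}}$; then (a) holds.

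Finally, check (c). The relations for $P$ and $P^{(\lambda)}$ are exact, and conjugating by $w_\mu$ makes them hold up to errors tending to $0$ along $\mathcal{W}$, hence exactly in the ultrapower. Here the diagonal of $\alpha(a_1)=\pi_1^{\mathcal{U}}(\Delta a_1)$ is $\pi_1^{\mathcal{V}}(\Delta a_1)$, and the diagonal of $\alpha(a_2)$ is $\pi_1^{\mathcal{V}}(\pi_2(a_2))$. The last relation in (c) only sees the summand $WPW^*$, where it reduces to $P\sigma(a_1)V_0=0$.
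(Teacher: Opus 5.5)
There is a genuine gap, and it is in the third relation of (c). Your construction of $\sigma$, $V_0$, $P$ and the exact model relations are fine. The problem is that your intertwiner $W$ is built only from the constant vectors $\Delta(\mathcal K)\subseteq\mathcal K^{\mathcal U}$, whereas (c) requires $p\,(\pi_1^{\mathcal V}\circ\Delta)(a_1)\,V^{\mathcal V}=0$ on all of $\mathcal K^{\mathcal V}=(\mathcal K^{\mathcal U})^{\mathcal W}$.

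Take $k'\in\mathcal K^{\mathcal U}\ominus\Delta(\mathcal K)$. Then $\langle V^{\mathcal U}k',\alpha(x)V^{\mathcal U}\Delta(k)\rangle=\langle k',\Delta(\rho(\mathbb E(x))k)\rangle=0$. So $V^{\mathcal U}k'$, and hence $\alpha(a_1)V^{\mathcal U}k'$, lies in the complement $\mathcal H'$ of $W\mathcal H_0$, where your projection is $w_\mu P^{(\lambda)}w_\mu^*$. The Hadwin unitaries are only approximate intertwiners of $\sigma^{(\lambda)}$ and $\alpha'$. They say nothing about where the vectors $V^{\mathcal U}k'$ go, so nothing gives $\lim_{\mu\to\mathcal W}\|w_\mu P^{(\lambda)}w_\mu^*\alpha'(a_1)V^{\mathcal U}k'\|=0$.

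Your closing claim that the last relation ``only sees the summand $WPW^*$'' is therefore false whenever $\mathcal K$ is infinite-dimensional and $\mathcal U$ is free. It holds only when $\mathcal K^{\mathcal U}=\Delta(\mathcal K)$, e.g.\ in the scalar case $D=\mathbb C$. Enlarging $\mathcal U$ makes $\mathcal K^{\mathcal U}\ominus\Delta(\mathcal K)$ larger, so it does not help. The requirement also has content: the converse direction needs this relation on all of $V^{\mathcal U}\mathcal K^{\mathcal U}$ to recover $\mathbb E_1^{\mathcal U}\circ\theta=\Delta\circ\mathbb E$. An element of $D^{\mathcal U}$ is not determined by its matrix coefficients at constant vectors.

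The missing idea is to run the dilation argument over all of $\mathcal K^{\mathcal U}$, comparing against an \emph{ultrapower} of the model rather than against $\sigma$ itself. The paper proceeds as follows.
\begin{enumerate}
\item Amplify the model: $\mathcal H:=\mathcal H_0\otimes\ell^2(S)$, $\Pi:=\sigma\otimes1$, $\pi_1:=\Pi|_{A_1}$, $V:=V_0\otimes\delta_{s_0}$. This choice makes $\pi_1$ the restriction of a model representation of all of $A$; your auxiliary summand $\tilde\pi^{(\kappa)}$ does not provide this.
\item On $\mathcal H^{\mathcal U}$, compare the diagonal representation $x\mapsto(\Pi(x))_{i\to\mathcal U}$ with $\pi_1^{\mathcal U}\circ\theta$. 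These agree on $A_1$, and their compressions by $V^{\mathcal U}$ coincide as maps into $\mathcal B(\mathcal K^{\mathcal U})$.
\item This gives a unitary $u_0$ between the cyclic subspaces generated by all of $V^{\mathcal U}\mathcal K^{\mathcal U}$, with $u_0V^{\mathcal U}=V^{\mathcal U}$.
\item Hadwin's theorem is applied only on the complements, producing a unitary $w$ with $wV^{\mathcal V}=V^{\mathcal V}$. One sets $p:=w^*(P\otimes1)^{\mathcal V}w$.
\item The diagonal projection $(P\otimes1)^{\mathcal V}$ kills the vectors $(\Pi(a_1))^{\mathcal V}V^{\mathcal V}\mathcal K^{\mathcal V}$ exactly, because your model relation passes to ultrapowers coordinatewise. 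Since $wV^{\mathcal V}=V^{\mathcal V}$, this transfers to $p$.
\end{enumerate}

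The same choice settles the rank bookkeeping you left open. With $|S|=\kappa=2^\lambda$ and $\lambda\geq\max\{|I|,\dim\mathcal H_0,\aleph_0\}$, one has $\kappa^{|I|}=\kappa=\dim\mathcal H^{\mathcal U}$. Families of orthogonal vectors $(\zeta_i\otimes\delta_s)_{i\to\mathcal U}$ with $s\neq s_0$ then show that every non-zero element has rank exactly $\kappa$ on both complements. Your fallback of comparing $\alpha'\oplus\sigma^{(\lambda)}$ with $\sigma^{(\lambda)}$ does not fit, because $\mathcal H'$ is a fixed subspace of $\mathcal H^{\mathcal U}$ and you cannot add a summand to it afterwards.
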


\begin{proof}
Let $A:=(A_1,\mathbb{E}_1)\star_D(A_2,\mathbb{E}_2)$ and $\mathbb{E}:=\mathbb{E}_1 \star \mathbb{E}_2$. Writing $\mathfrak{X}_{1}:=L^{2}(A_{1},\mathbb{E}_{1})$ and $\mathfrak{X}_{2}:=L^{2}(A_{2},\mathbb{E}_{2})$ for the GNS-Hilbert $D$-modules of $(A_{1},\mathbb{E}_{1})$ and $(A_{2},\mathbb{E}_{2})$ respectively, let
\[
\mathfrak X:=\Omega D\oplus \bigoplus_{n\geq1} \bigoplus_{i_{1}\neq\cdots\neq i_{n}} \mathfrak X_{i_1}^\circ\otimes_D\cdots\otimes_D \mathfrak X_{i_n}^\circ
\]
be the free product Hilbert $D$-module, where $\Omega$ is the vacuum vector. The interior tensor product $\mathcal F:=\mathfrak X\otimes_D \mathcal K$ is a Hilbert space, and the canonical representation $\varphi\colon A\hookrightarrow\mathcal{B}(\mathcal F)$ is faithful. The map $V_0\colon\mathcal K\to\mathcal F$ defined by $V_0\xi:=\Omega\otimes\xi$ is an isometry satisfying
\begin{equation}
V_0^*\varphi(x)V_0=\rho(\mathbb{E}(x)) \quad\text{for every }x\in A. \label{eq:V0Conjugation}
\end{equation}

The Hilbert space $\mathcal F$ decomposes as $\mathcal F=V_0\mathcal K\oplus\mathcal F_1^\circ\oplus\mathcal F_2^\circ$, where
\[
\mathcal F_i^\circ:= \left( \bigoplus_{n\geq1} \bigoplus_{i=i_{1} \neq\cdots\neq i_{n}} \mathfrak X_{i_1}^\circ\otimes_D\cdots\otimes_D \mathfrak X_{i_n}^\circ \right)\otimes_D \mathcal K
\]
for $i\in \{1,2\}$; denote the projection onto $\mathcal F_2^\circ \subseteq \mathcal{F}$ by $p_0$. Then,
\begin{equation}
\label{eq:Identities}
\begin{aligned}
p_0\varphi(a_1)p_0 &=\varphi(\mathbb{E}_1(a_1))p_0,\\
p_0^\perp\varphi(a_2)p_0^\perp &=\varphi(\mathbb{E}_2(a_2))p_0^\perp,\\
p_0\varphi(a_1)V_0&=0.
\end{aligned}
\end{equation}
for all $a_1 \in A_1$ and $a_2 \in A_2$.

Let $I$ be the index set of $\mathcal{U}$. Choose an infinite cardinal $\lambda\geq\max\{|I|,\dim\mathcal F,\aleph_0\}$ and set $\kappa:=2^\lambda$, so that $\kappa^{|I|}=\kappa^{\aleph_0}=\kappa$. For a set $S$ with $|S|=\kappa$, put $\mathcal H:=\mathcal F\otimes\ell^2(S)$. Let $(\delta_s)_{s\in S}$ be the canonical orthonormal basis of $\ell^2(S)$, fix $s_0\in S$, and write $\delta_0:=\delta_{s_0}$. For $x\in A$ and $\xi \in \mathcal{K}$ define
\[
\Pi(x):=\varphi(x)\otimes1,\qquad V\xi:=V_0\xi\otimes \delta_0,\qquad P:=p_0\otimes1.
\]
Then $\Pi\colon A\hookrightarrow\mathcal{B}(\mathcal H)$ is a faithful unital $\ast$-representation, $V\colon\mathcal K\to\mathcal H$ is an isometry, and $P$ is a projection in $\mathcal{B}(\mathcal H)$. Moreover, $\dim(\mathcal{H})=\dim(\mathcal H^{\mathcal{U}})=\kappa$. For $\pi_1:=\Pi|_{A_1}$ we have by \eqref{eq:V0Conjugation} that
\begin{equation}
V^*\pi_1(a)V=\rho(\mathbb{E}_1(a)) \quad\text{for every }a\in A_1. \label{eq:VConjugation}
\end{equation}

Define faithful unital $\ast$-representations $\alpha,\beta\colon A\hookrightarrow\mathcal{B}(\mathcal H^{\mathcal{U}})$ by $\alpha(x):=(\Pi(x))_{i\to\mathcal{U}}$ and $\beta(x):=\pi_{1}^{\mathcal{U}}(\theta(x))$ for $x\in A$. These agree on $A_1$, with
\begin{equation}
\alpha(a)=\beta(a) =\pi_{1}^{\mathcal{U}}(\Delta(a)) \quad\text{for }a\in A_1. \label{eq:AlphaBetaIdentity}
\end{equation}
The subspaces
\[
\begin{aligned}
\mathcal H_\alpha &:=\overline{\operatorname{span}} \{\alpha(x)V^{\mathcal{U}}\xi\mid x\in A,\ \xi\in\mathcal K^{\mathcal{U}}\},\\
\mathcal H_\beta &:=\overline{\operatorname{span}} \{\beta(x)V^{\mathcal{U}}\xi\mid x\in A,\ \xi\in\mathcal K^{\mathcal{U}}\},
\end{aligned}
\]
reduce $\alpha$ and $\beta$, respectively. Using \eqref{eq:V0Conjugation}, \eqref{eq:VConjugation} and the expectation-preserving property $\mathbb{E}_{1}^{\mathcal{U}}\circ\theta=\Delta\circ\mathbb{E}$ of $\theta$, we obtain
\[
(V^{\mathcal{U}})^*\alpha(x)V^{\mathcal{U}} =\rho^{\mathcal{U}}\bigl(\Delta(\mathbb{E}(x))\bigr) =(V^{\mathcal{U}})^*\beta(x)V^{\mathcal{U}}
\]
for $x\in A$. Applying this identity shows that the linear map $u_{0}\colon\mathcal{H}_{\beta}\rightarrow\mathcal{H}_{\alpha}$ given by $\beta(x)V^{\mathcal{U}}\xi \mapsto \alpha(x)V^{\mathcal{U}}\xi$ for $x\in A$, $\xi\in\mathcal{K}^{\mathcal{U}}$ is a unitary operator with $u_{0}V^{\mathcal{U}}=V^{\mathcal{U}}$ and $u_{0}(\beta(x)|_{\mathcal{H}_{\beta}})u_{0}^{\ast}=\alpha(x)|_{\mathcal{H}_{\alpha}}$ for $x\in A$, i.e., $u_{0}$ intertwines the representations $\alpha$ and $\beta$.\\

In order to apply Theorem \ref{HadwinTheorem}, we prove the following two claims.\\

\emph{Claim 1.} For every $x\in A\setminus\{0\}$ the equality $\operatorname{rank} \bigl(\alpha(x)|_{\mathcal H_\alpha^\perp}\bigr)=\kappa$ holds.

\vspace{1mm}

\emph{Proof of the claim.} Choose $\xi\in\mathcal F$ with $\varphi(x)\xi\neq0$. Since $\mathcal H_\alpha\subseteq (\mathcal F\otimes\mathbb C\delta_0)^{\mathcal{U}}$, the vector $(\xi\otimes \delta_s)_{i\to\mathcal{U}}$ belongs to $\mathcal H_\alpha^\perp$ for every $s\in S\setminus\{s_0\}$ with $\alpha(x)(\xi\otimes \delta_s)_{i\to\mathcal{U}} =(\varphi(x)\xi\otimes \delta_s)_{i\to\mathcal{U}}$. These vectors are non-zero and mutually orthogonal. Thus the rank is at least $\kappa$, and equality follows from $\dim(\mathcal H^{\mathcal{U}})=\kappa$.\\

\emph{Claim 2.} For every $x\in A\setminus\{0\}$ the equality $\operatorname{rank} \bigl(\beta(x)|_{\mathcal H_\beta^\perp}\bigr)=\kappa$ holds.

\vspace{1mm}

\emph{Proof of the claim.} Write $\theta(x)=(a_i)_{i\to\mathcal{U}}$ for a bounded family $(a_i)_{i\in I} \subseteq A_1$. Since $\theta(x)\neq0$, there exist $\varepsilon>0$ and $I_0\in\mathcal{U}$ such that $\|a_i\|>2\varepsilon$ for every $i\in I_0$. Faithfulness of $\varphi$ allows us to choose unit vectors $\xi_i\in\mathcal F$ such that $\|\varphi(a_i)\xi_i\|>\varepsilon$ for every $i\in I_0$. Since $\mathcal H_\beta\subseteq (\mathcal F\otimes\mathbb C\delta_0)^{\mathcal{U}}$, the vector $(\xi_i\otimes \delta_s)_{i\to\mathcal{U}}$ belongs to $\mathcal H_\beta^\perp$ for every $s\in S\setminus\{s_0\}$, and $\beta(x)(\xi_i\otimes \delta_s)_{i\to\mathcal{U}} =(\varphi(a_i)\xi_i\otimes \delta_s)_{i\to\mathcal{U}}$. These vectors are mutually orthogonal and have norm at least $\varepsilon$. The desired equality hence follows as in Claim 1.\\

Applying the claims to $x=1$ gives $\dim(\mathcal H_\alpha^\perp) =\dim(\mathcal H_\beta^\perp)=\kappa$. By Theorem~\ref{HadwinTheorem}, there hence is a net of unitaries $(u_j)_{j\in J}\subseteq \mathcal{B}(\mathcal H_\beta^\perp,\mathcal H_\alpha^\perp)$ such that $\| u_j(\beta(x)|_{\mathcal H_\beta^\perp})u_j^* -\alpha(x)|_{\mathcal H_\alpha^\perp} \|\rightarrow 0 $ for every $x\in A$. The unitaries $u_{0}\oplus u_{j}\in\mathcal{B}(\mathcal{H}^{\mathcal{U}})$ satisfy $(u_{0}\oplus u_{j})V^{\mathcal{U}}=u_{0}V^{\mathcal{U}}=V^{\mathcal{U}}$ and
\[
\Vert(u_{0}\oplus u_{j})\beta(x)(u_{0}\oplus u_{j})^{\ast}-\alpha(x)\Vert\longrightarrow0\quad\text{ for all }x\in A.
\]

Choosing a cofinal ultrafilter $\mathcal{W}$ on the directed set $J$, and letting $\mathcal{V}$ be the Fubini ultrafilter $\mathcal{U} \otimes \mathcal{W}$, the operator $w:=(u_0\oplus u_j)_{j\to\mathcal{W}}$ can be viewed as a unitary in $\mathcal{B}(\mathcal H^{\mathcal{V}})$. The preceding norm convergence and cofinality of $\mathcal{W}$ give
\begin{equation}
wV^{\mathcal{V}}=V^{\mathcal{V}}, \qquad w\beta^{\mathcal{W}}(x)w^*=\alpha^{\mathcal{W}}(x) \quad\text{for every }x\in A. \label{eq:ExactIntertwining}
\end{equation}
In particular, since $\alpha$ and $\beta$ agree on $A_1$ by \eqref{eq:AlphaBetaIdentity}, the unitary $w$ commutes with $\pi_{1}^{\mathcal{V}}(\Delta(a))$ for every $a\in A_1$.

Denote by $\widetilde\Delta\colon A_{1}^{\mathcal{U}}\hookrightarrow A_{1}^{\mathcal{V}}$ the diagonal embedding into the outer ultrapower, and set $\Theta:=\widetilde\Delta\circ\theta$. Then, $\Theta|_{A_1}=\Delta$ and $\beta^{\mathcal{W}}=\pi_{1}^{\mathcal{V}}\circ \Theta$. Furthermore, set $p:=w^*P^{\mathcal{V}}w$ and let $\pi_2$ be the restriction of $\Theta$ to $A_2$. Since $\theta(A_2)\subseteq B^{\mathcal{U}}$ and $\widetilde\Delta(B^{\mathcal{U}})\subseteq B^{\mathcal{V}}$, this defines a unital $\ast$-embedding $\pi_2\colon A_2\hookrightarrow B^{\mathcal{V}}$.

The identities in \eqref{eq:Identities} pass to $P^{\mathcal{V}}$, so that for every $a_1 \in A_1$ and $a_2 \in A_2$,
\begin{align*}
P^{\mathcal{V}}\,\alpha^{\mathcal{W}}(a_1)\,P^{\mathcal{V}} &=\bigl((\alpha^{\mathcal{W}}\circ\mathbb{E}_1)(a_1)\bigr) P^{\mathcal{V}},\\
(P^{\mathcal{V}})^\perp\,\alpha^{\mathcal{W}}(a_2)\, (P^{\mathcal{V}})^\perp &= \bigl((\alpha^{\mathcal{W}}\circ\mathbb{E}_2)(a_2)\bigr) (P^{\mathcal{V}})^\perp,\\
P^{\mathcal{V}}\alpha^{\mathcal{W}}(a_1)V^{\mathcal{V}} &= 0.
\end{align*}
Conjugating these identities by $w^*$, and using \eqref{eq:ExactIntertwining} together with $p^\perp=w^*(P^{\mathcal{V}})^\perp w$, yields
\[
\begin{aligned}
p\,\bigl((\pi_{1}^{\mathcal{V}}\circ\Delta)(a_1)\bigr)\,p &=\bigl((\pi_{1}^{\mathcal{V}}\circ\Delta \circ\mathbb{E}_1)(a_1)\bigr)p,\\
p^\perp\,\bigl((\pi_{1}^{\mathcal{V}}\circ\pi_2)(a_2)\bigr)\,p^\perp &=\bigl((\pi_{1}^{\mathcal{V}}\circ\Delta \circ\mathbb{E}_2)(a_2)\bigr)p^\perp,\\
p\,\bigl((\pi_{1}^{\mathcal{V}}\circ\Delta)(a_1)\bigr) \,V^{\mathcal{V}} &=0
\end{aligned}
\]
for every $a_1\in A_1$ and $a_2\in A_2$. Moreover, $\pi_2|_D=\Delta|_D$ and $V^*\pi_1(a)V=(\rho\circ\mathbb{E}_1)(a)$ for every $a\in A_1$. Thus the unital $\ast$-embeddings $\pi_1\colon A_1\hookrightarrow\mathcal{B}(\mathcal H)$ and $\pi_2\colon A_2\hookrightarrow B^{\mathcal{V}}\subseteq A_{1}^{\mathcal{V}}$, the projection $p\in\mathcal{B}(\mathcal H^{\mathcal{V}})$, and the isometry $V\colon\mathcal K\to\mathcal H$ satisfy conditions (a), (b), and (c).
\end{proof}

The proof of the second implication of Theorem \ref{MainTheorem} follows from the following proposition employing Hasegawa's construction in \cite{Hasegawa18} (see Subsection \ref{Subsec:amalgamated}).

\begin{proposition} \label{Direction2} Let $B\subseteq A_{1}$ and $D\subseteq A_{1},A_{2}$ be unital inclusions of \Cs-algebras admitting non-degenerate conditional expectations $\mathbb{E}_{1}\colon A_{1}\to D$ and $\mathbb{E}_{2}\colon A_{2}\to D$. Assume that there exist Hilbert spaces $\mathcal{H}$ and $\mathcal{K}$, an ultrafilter $\mathcal{U}$, unital $\ast$-embeddings $\pi_{1}\colon A_{1} \hookrightarrow \mathcal{B}(\mathcal{H})$, $\pi_{2}\colon A_{2}\hookrightarrow B^{\mathcal{U}}\subseteq A_{1}^{\mathcal{U}}$ and $\rho\colon D\hookrightarrow\mathcal{B}(\mathcal{K})$, an isometry $V\colon\mathcal{K}\to\mathcal{H}$, and a projection $p\in\mathcal{B}(\mathcal{H}^{\mathcal{U}})$ satisfying the following conditions:

\begin{enumerate}[label=\textup{(\alph*)},leftmargin=*]
\item The embedding $\pi_{2}$ restricts to $\Delta|_{D}$ on $D$.
\item For every $a\in A_{1}$ the identity $V^{\ast}\pi_{1}(a)V=(\rho\circ\mathbb{E}_{1})(a)$ holds.
\item For every $a_{1}\in A_{1}$ and $a_{2}\in A_{2}$,
\begin{align*}
p\,\bigl((\pi_{1}^{\mathcal{U}}\circ\Delta)(a_{1})\bigr) \,p & =\bigl((\pi_{1}^{\mathcal{U}}\circ\Delta\circ\mathbb{E}_{1})(a_{1})\bigr)\,p,\\
p^\perp\,\bigl((\pi_{1}^{\mathcal{U}}\circ\pi_{2})(a_{2})\bigr)\, p^\perp & =\bigl((\pi_{1}^{\mathcal{U}}\circ\Delta\circ\mathbb{E}_{2})(a_{2})\bigr)\, p^\perp,\\
p\,\bigl((\pi_{1}^{\mathcal{U}}\circ\Delta)(a_{1})\bigr)\,V^{\mathcal{U}} & =0,
\end{align*}
where $V^{\mathcal{U}}\colon\mathcal{K}^{\mathcal{U}}\to\mathcal{H}^{\mathcal{U}}$ is the isometry induced by $V$.
\end{enumerate}
Then there exists a unital \Cs-embedding  $\theta\colon(A_{1},\mathbb{E}_{1})\star_{D}(A_{2},\mathbb{E}_{2})\hookrightarrow A_{1}^{\mathcal{U}}$ such that
\[
\theta|_{A_{1}}=\Delta,\quad\mathbb{E}_{1}^{\mathcal{U}}\circ\theta=\Delta \circ(\mathbb{E}_{1}\star\mathbb{E}_{2}),\quad\theta(A_{2})\subseteq B^{\mathcal{U}}.
\]
\end{proposition}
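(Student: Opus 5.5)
The plan is to use the universal description of Hasegawa's algebra $\Delta\mathbf{T}(A,\mathbb{E})$ from \cite[Corollary~4.2.7]{Hasegawa18}, where $A:=(A_1,\mathbb{E}_1)\star_D(A_2,\mathbb{E}_2)$ and $\mathbb{E}:=\mathbb{E}_1\star\mathbb{E}_2$.

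\textbf{Step 1: a representation of $\Delta\mathbf{T}(A,\mathbb{E})$.} Consider on $\mathcal{H}^{\mathcal U}$ the maps $\sigma_1:=\pi_1^{\mathcal U}\circ\Delta$ on $A_1$ and $\sigma_2:=\pi_1^{\mathcal U}\circ\pi_2$ on $A_2$. By (a) they agree on $D$, so they define a unital $\ast$-representation of the algebraic amalgamated free product. The first two relations in (c) are exactly Hasegawa's relations for $p$. Hence the universal property yields a unital $\ast$-homomorphism $\Sigma\colon\Delta\mathbf{T}(A,\mathbb{E})\to\mathcal{B}(\mathcal{H}^{\mathcal U})$ extending $\sigma_1,\sigma_2$ and sending the canonical projection to $p$. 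Restricting $\Sigma$ to the canonical copy of $A$ gives a $\ast$-homomorphism $\Sigma|_A\colon A\to\mathcal{B}(\mathcal{H}^{\mathcal U})$. Its image is generated by $\pi_1^{\mathcal U}(\Delta(A_1))$ and $\pi_1^{\mathcal U}(\pi_2(A_2))\subseteq\pi_1^{\mathcal U}(A_1^{\mathcal U})$, so it lies in the image of the faithful representation $\pi_1^{\mathcal U}$ of $A_1^{\mathcal U}$. This lets us define
\[
\theta:=(\pi_1^{\mathcal U})^{-1}\circ\Sigma|_A\colon A\to A_1^{\mathcal U}.
\]
By construction $\theta|_{A_1}=\Delta$ and $\theta(A_2)=\pi_2(A_2)\subseteq B^{\mathcal U}$.

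\textbf{Step 2: the expectation identity.} The goal is $(V^{\mathcal U})^*\Sigma(x)V^{\mathcal U}=\rho^{\mathcal U}(\Delta(\mathbb{E}(x)))$ for all $x\in A$. It suffices to check this on $D$ and on reduced words $x=a_1\cdots a_n$, where it means the left side vanishes.
\begin{itemize}
\item The key tool is the third relation in (c): $p\,\sigma_1(a)V^{\mathcal U}=0$, so $\sigma_1(a)V^{\mathcal U}=p^\perp\sigma_1(a)V^{\mathcal U}$. In particular $pV^{\mathcal U}=0$ (take $a=1$), so $V^{\mathcal U}$ has range in $p^\perp$.
\item Heuristically, $p^\perp$ plays the role of $V_0\mathcal K\oplus\mathcal F_1^\circ$ and $p$ that of $\mathcal F_2^\circ$, as in the proof of Proposition~\ref{Direction1}. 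Centred elements of $A_2$ map $p^\perp$ into $p$ (by the second relation in (c)). Centred elements of $A_1$ map $p$ into $p^\perp$ (by the first relation), and map $V^{\mathcal U}\mathcal{K}^{\mathcal U}$ into $p^\perp\ominus V^{\mathcal U}\mathcal{K}^{\mathcal U}$, using (b) together with the third relation.
\item Inductively, the vector $\Sigma(a_1\cdots a_n)V^{\mathcal U}\xi$ should lie in $p$ when $i_1=2$, and in $p^\perp\cap(V^{\mathcal U}\mathcal K^{\mathcal U})^\perp$ when $i_1=1$. In both cases it is orthogonal to $V^{\mathcal U}\mathcal K^{\mathcal U}$, which lies in $p^\perp$.
\end{itemize}
Rather than carrying out this bookkeeping by hand, I would transfer the computation into Hasegawa's algebra. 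Inside $\Delta\mathbf{T}(A,\mathbb{E})$ one should have an identity of the form $q\,x\,q=\mathbb{E}(x)q$ for a suitable "vacuum" projection $q\leq p^\perp$ with $q a_1 p=0$-type relations. The difficulty is that $V^{\mathcal U}(V^{\mathcal U})^*$ is not in the image of $\Sigma$. So the honest route is the inductive vector argument, carefully tracking the three subspaces $p\mathcal{H}^{\mathcal U}$, $V^{\mathcal U}\mathcal{K}^{\mathcal U}$ and $p^\perp\ominus V^{\mathcal U}\mathcal{K}^{\mathcal U}$.

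\textbf{Step 3: from the compression identity to $\mathbb{E}_1^{\mathcal U}\circ\theta=\Delta\circ\mathbb{E}$.} Step 2 gives $(V^{\mathcal U})^*\pi_1^{\mathcal U}(\theta(x))V^{\mathcal U}=\rho^{\mathcal U}(\Delta(\mathbb{E}(x)))$. Coordinatewise, (b) gives $(V^{\mathcal U})^*\pi_1^{\mathcal U}(y)V^{\mathcal U}=\rho^{\mathcal U}(\mathbb{E}_1^{\mathcal U}(y))$ for $y\in A_1^{\mathcal U}$. Since $\rho^{\mathcal U}$ is faithful on $D^{\mathcal U}$, these two identities together give $\mathbb{E}_1^{\mathcal U}\circ\theta=\Delta\circ\mathbb{E}$.

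\textbf{Step 4: injectivity.} It remains to see that $\theta$ is injective. Since $\mathbb{E}$ is non-degenerate, the kernel of $\theta$ is an ideal $J$ of $A$ with $\mathbb{E}(x^*x)=\mathbb{E}_1^{\mathcal U}(\theta(x^*x))=0$ for $x\in J$. Then $\mathbb{E}(y^*x^*xy)=0$ for all $y\in A$, so $x$ acts as zero in the GNS module; faithfulness of the GNS representation then forces $J=0$.

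I expect Step 2 to be the main obstacle. It requires showing that the universal relations, together with the single relation $p\,\sigma_1(a)V^{\mathcal U}=0$, force all reduced words to have vanishing compression to $V^{\mathcal U}$. The delicate point is the case where a word starts or ends with a letter from $A_2$, where one must use that $V^{\mathcal U}\mathcal K^{\mathcal U}\subseteq p^\perp$.
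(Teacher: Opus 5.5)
Your proposal is correct and follows the paper's proof step by step: Hasegawa's universal property, then the vanishing of $(V^{\mathcal U})^{\ast}\Sigma(x)V^{\mathcal U}$ on words, then faithfulness of $\rho^{\mathcal U}$, then non-degeneracy of $\mathbb{E}_1\star\mathbb{E}_2$ for injectivity; your three-subspace induction in Step~2 also goes through. One step your bullet list leaves unjustified is a reduced word of length at least two beginning with $a_1\in A_1^\circ$: there $\Sigma(a_1)w\perp V^{\mathcal U}\mathcal{K}^{\mathcal U}$ for $w\in p\mathcal{H}^{\mathcal U}$ follows from the adjoint of the third relation in (c), $(V^{\mathcal U})^{\ast}\sigma_1(a_1)p=\bigl(p\,\sigma_1(a_1^{\ast})V^{\mathcal U}\bigr)^{\ast}=0$, not from the first relation, and this is exactly the paper's observation $(V^{\mathcal U})^{\ast}\sigma(a_0)p=0$ (by writing words as $a_0b_1a_1\cdots b_ma_m$ with uncentred end letters $a_0,a_m\in A_1$, the paper only needs to track $p$ versus $p^{\perp}$).
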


\begin{proof}
Set $A:=(A_{1},\mathbb{E}_{1})\star_{D}(A_{2},\mathbb{E}_{2})$, $\mathbb{E}:=\mathbb{E}_{1}\star\mathbb{E}_{2}$, and write $\sigma_{1}:=\pi_{1}^{\mathcal{U}}\circ\Delta$, $\sigma_{2}:=\pi_{1}^{\mathcal{U}}\circ\pi_{2}$. By the first assumption, the representations $\sigma_{1}$ and $\sigma_{2}$ agree on $D$. The first two identities in (c), together with the universal property of Hasegawa's ambient \Cs-algebra $\Delta\mathbf{T}(A,\mathbb{E})$ (see Subsection \ref{Subsec:amalgamated}) therefore yield a unital $\ast$-representation $\sigma\colon A\to\mathcal{B}(\mathcal{H}^{\mathcal{U}})$ extending $\sigma_{1}$ and $\sigma_{2}$, whose image is contained in $\pi_{1}^{\mathcal{U}}(A_{1}^{\mathcal{U}})$. Since $\pi_{1}^{\mathcal{U}}$ is faithful, we obtain a unital $\ast$-homomorphism $\theta:=(\pi_{1}^{\mathcal{U}})^{-1}\circ\sigma\colon A\rightarrow A_{1}^{\mathcal{U}}$ with $\theta|_{A_{1}}=\Delta$ and $\theta|_{A_{2}}=\pi_{2}$, so that in particular $\theta(A_{2})\subseteq B^{\mathcal{U}}$.

We claim that
\[
(V^{\mathcal{U}})^{\ast}\sigma(x)V^{\mathcal{U}}=(\rho^{\mathcal{U}}\circ\Delta\circ\mathbb{E})(x)\qquad\text{ for every }x\in A.
\]
Indeed, on $A_{1}$ this follows from (b). To verify the claim on words involving $A_{2}$, consider an element of the form $x:=a_{0}b_{1}a_{1}\cdots b_{m}a_{m}\in A$, where $m\geq1$, $a_{1},\ldots,a_{m-1}\in A_{1}^{\circ}$ , $b_{1},\ldots,b_{m}\in A_{2}^{\circ}$, and $a_{0},a_{m}\in A_{1}$. The   assumptions in (c) give $\sigma(a_{m})V^{\mathcal{U}}\mathcal{K}^{\mathcal{U}}\subseteq p^{\perp}\mathcal{H}^{\mathcal{U}}$ and
\begin{eqnarray*}
\sigma(a_{i})p\mathcal{H}^{\mathcal{U}}\subseteq p^{\perp}\mathcal{H}^{\mathcal{U}} \quad&  \text{ for }1\leq i<m,\\
\;\sigma(b_{i})p^{\perp}\mathcal{H}^{\mathcal{U}}\subseteq p\mathcal{H}^{\mathcal{U}}\quad & \text{ for }1\leq i \leq m.
\end{eqnarray*}
Applying these inclusions successively shows that $\sigma(b_{1}a_{1}\cdots b_{m}a_{m})V^{\mathcal{U}}\mathcal{K}^{\mathcal{U}}\subseteq p\mathcal{H}^{\mathcal{U}}$. Moreover, $(V^{\mathcal{U}})^{\ast}\sigma(a_{0})p=0$, by the last identity in (c). Thus, $(V^{\mathcal{U}})^{\ast}\sigma(x)V^{\mathcal{U}}=0=(\rho^{\mathcal{U}}\circ\Delta\circ\mathbb{E})(x)$. Since $A_{1}$ and words of the form above linearly span a dense subspace of $A$, the claim follows.

From (b) we obtain $(V^{\mathcal{U}})^{\ast}\pi_{1}^{\mathcal{U}}(a)V^{\mathcal{U}}=(\rho^{\mathcal{U}}\circ\mathbb{E}_{1}^{\mathcal{U}})(a)$ for all $a\in A_{1}^{\mathcal{U}}$. Combining this with the previous claim and using the faithfulness of $\rho^{\mathcal{U}}$, we get $\mathbb{E}_{1}^{\mathcal{U}}\circ\theta=\Delta\circ\mathbb{E}.$ It remains to show that $\theta$ is faithful. For this, let $x \in A$ with $\theta(x)=0$. For every $y\in A$,
\[
\Delta(\mathbb{E}(y^{*}x^{*}xy))=\mathbb{E}_{1}^{\mathcal{U}}(\theta(xy)^{*}\theta(xy))=0
\]
and hence $\pi_{\mathbb{E}}(x)=0$. The non-degeneracy of $\mathbb{E}$ implies $x=0$; thus $\theta$ is injective. \end{proof}

The assertion of Theorem \ref{AbstractCriterion-general} can also be deduced from Proposition \ref{Direction1} and Proposition \ref{Direction2}.

\begin{proof}[Proof of Theorem \ref{AbstractCriterion-general}] For the implication ``$\text{(i)}\Rightarrow\text{(ii)}$'' suppose that the inclusion $B\subseteq(A,\rho)$ is selfless. Then there exist a non-trivial \Cs-probability space $(C,\gamma)$, an ultrafilter $\mathcal{W}$, and a state-preserving embedding $\Phi \colon (A,\rho)\star(C,\gamma) \hookrightarrow(A^{\mathcal{W}},\rho^{\mathcal{W}})$ for which $\Phi|_A=\Delta$ and $\Phi(C)\subseteq B^{\mathcal{W}}$ holds. We apply Proposition~\ref{Direction1} with $A_1 :=A$, $A_2:=C$, $D:=\mathbb C$, and $\mathbb{E}_1(a):=\rho(a)1$ for $a\in A$, $\mathbb{E}_2(c):=\gamma(c)1$ for $c\in C$, to obtain a Hilbert space $\mathcal H$, an ultrafilter $\mathcal{U}$, unital $\ast$-embeddings $\pi\colon A\hookrightarrow\mathcal{B}(\mathcal H)$, $\theta\colon C\hookrightarrow B^{\mathcal{U}}$, an isometry $V\colon\mathbb C\to\mathcal H$, and a projection $p\in\mathcal{B}(\mathcal H^{\mathcal{U}})$ with
\begin{eqnarray*}
V^*\pi(a)V=\rho(a)1, \quad p\,((\pi^{\mathcal{U}} \circ \Delta)(a))\,p =\rho(a)p, \quad p\,((\pi^{\mathcal{U}}\circ \Delta)(a))\,V^{\mathcal{U}}=0
\end{eqnarray*}
for all $a\in A$ and
\begin{eqnarray*}
p^\perp\,((\pi^{\mathcal{U}}\circ \theta)(c))\,p^\perp =\gamma(c)p^\perp \quad
\end{eqnarray*}
for all $c \in C$. Then $\xi :=V(1)$ is a unit vector implementing $\rho$ through $\pi$ with $p\,((\pi^{\mathcal{U}}\circ \Delta)(a))\,\xi^{\mathcal{U}}=0$ for $a \in A$. In particular, $p\xi^{\mathcal{U}}=0$ and hence
\[
\begin{aligned}
\rho^{\mathcal U}(\theta(c))
&=\langle \xi^{\mathcal U},
   (\pi^{\mathcal U}\circ\theta)(c)\xi^{\mathcal U}\rangle\\
&=\langle \xi^{\mathcal U},
   p^\perp(\pi^{\mathcal U}\circ\theta)(c)
   p^\perp\xi^{\mathcal U}\rangle
 =\gamma(c).
\end{aligned}
\]
for $c\in C$. Thus $\theta$ is state preserving, and all assertions in (ii) follow.

For the converse implication ``$\text{(ii)}\Rightarrow\text{(i)}$'' suppose that (ii) holds and define an isometry $V\colon\mathbb C\to\mathcal H$ by $V(z)=z\xi$ for $z \in \mathbb{C}$. With the same scalar-valued conditional expectations as above, the conditions of Proposition~\ref{Direction2} are precisely the assumed identities, together with $V^*\pi(a)V=\rho(a)1_{\mathbb C}$ for $a\in A$. This yields a unital $\ast$-embedding $\Phi\colon(A,\rho)\star(C,\gamma)\hookrightarrow A^{\mathcal{U}}$ satisfying
\[
\Phi|_A=\Delta,\qquad \rho^{\mathcal{U}}\circ\Phi=\rho\star\gamma,\qquad \Phi(C)\subseteq B^{\mathcal{U}}.
\]
Since $C\neq\mathbb C1$, this witnesses the selflessness of $B\subseteq(A,\rho)$.
\end{proof}

Using similar techniques as in the previous proofs, we can also give a proof of Theorem \ref{AbstractCriterion-completely-selfless}. Note that our proof does not use Ozawa's approach via the universal property of free products with the Toeplitz algebra.
We need an intermediate lemma first.

\begin{lemma} \label{lem:nuclear-probability-subspace}
    Let $(C,\gamma)$ be a non-trivial \Cs-probability space. Then there exists a non-trivial unital commutative \Cs-subalgebra $C_0\subseteq C$ such that the restriction $\gamma|_{C_0}$ is faithful.
\end{lemma}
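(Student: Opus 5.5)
The plan is to realise $C_0$ as the \Cs-algebra generated by a single non-scalar self-adjoint element, chosen so that its spectral measure in the vacuum vector has full support on its spectrum. The main obstacle is the gap issue identified in the third step, which I have not resolved in general.

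\emph{Setup.} Let $(\pi_\gamma,\mathcal H_\gamma,\xi_\gamma)$ be the GNS-representation of $(C,\gamma)$; it is faithful by assumption. Since $C\not\cong\mathbb C$, there is a self-adjoint $b\in C\setminus\mathbb C1$. For any self-adjoint $a\in C$ we have $\Cs(a)\cong C(\sigma(a))$. The restriction $\gamma|_{\Cs(a)}$ corresponds to the spectral measure $\mu_a$ of $\pi_\gamma(a)$ in the vector $\xi_\gamma$, and for continuous $f$ we have
\[
\gamma\bigl(f(a)^*f(a)\bigr)=\int_{\sigma(a)}|f|^2\,d\mu_a .
\]
Hence $\gamma|_{\Cs(a)}$ is faithful if and only if $K_a:=\operatorname{supp}\mu_a$ equals $\sigma(a)$, and $\Cs(a)$ is non-trivial if and only if $|\sigma(a)|\ge 2$. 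The task is therefore to find a self-adjoint $a$ with $\sigma(a)=K_a$ and $|K_a|\ge2$.

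\emph{Getting at least two points in the support.} I first need $|K_b|\ge2$ for some self-adjoint $b$. Suppose instead that $K_b$ is a single point $\{t_b\}$ for every self-adjoint $b$. Then $\pi_\gamma(b)\xi_\gamma=t_b\xi_\gamma$ for all self-adjoint $b$, so $\xi_\gamma$ is a joint eigenvector of $\pi_\gamma(C)$. Since $\xi_\gamma$ is cyclic, this gives $\mathcal H_\gamma=\mathbb C\xi_\gamma$. Faithfulness of $\pi_\gamma$ then forces $C\cong\mathbb C$, a contradiction. So some self-adjoint $b$ has $|K_b|\ge 2$.

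\emph{Collapsing the spectrum onto the support.} I then replace $b$ by $a:=g(b)$ for a continuous non-decreasing $g\colon\mathbb R\to\mathbb R$. Then $\sigma(a)=g(\sigma(b))$ and $\mu_a=g_*\mu_b$, so $K_a=g(K_b)$. It suffices to choose $g$ with $g(\sigma(b))\subseteq g(K_b)$ and $|g(K_b)|\ge 2$.

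I first fix $k_1<k_2$ in $K_b$ and compose with the clamp onto $[k_1,k_2]$. This reduces the problem to the compact set $K:=K_b\cap[k_1,k_2]$. The difficulty is that $\sigma(b)$ may fill the bounded gaps of $K$, so $g$ must be constant on the closure of each gap. When $K$ has no isolated points, the natural choice is the distribution function $g(t)=\nu([k_1,t])$ of an atomless probability measure $\nu$ with support $K$, in the spirit of the Cantor function. This $g$ is continuous, constant on gaps, and maps $K$ onto $[0,1]$, which does the job.

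The main obstacle, which I have not resolved, is the case where $K$ has isolated points, in particular when $K$ is finite. For example, if $K_b=\{k_1,k_2\}$ and $\sigma(b)=[k_1,k_2]$, no continuous $g$ can work, because $g(\sigma(b))$ is an interval while $g(K_b)$ has at most two points. Here I would not work with $b$ alone. Instead I would vary the element: for instance, take a different self-adjoint $c$ in the dense set of $b$'s for which $\pi_\gamma(c)\xi_\gamma$ is not a multiple of $\xi_\gamma$, and use cyclicity of $\xi_\gamma$ to show that some such $c$ (or a function of $c$) has spectral measure with perfect support. Failing that, I would look for an element $a$ with $|\sigma(a)|\ge2$ finite and every spectral point charged by $\mu_a$. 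This last case is where I expect the genuine work, and I am not confident the approach goes through there.
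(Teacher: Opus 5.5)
The parts you carry out are correct. The reduction to finding a self-adjoint $a$ with $\operatorname{supp}\mu_a=\sigma(a)$ containing at least two points is right. The argument that some self-adjoint $b$ has $|K_b|\geq 2$ is right. The Cantor-function collapse settles the case where $K_b\cap[k_1,k_2]$ is perfect.

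The remaining case is a genuine gap, not a technicality. Your own example ($K_b=\{k_1,k_2\}$, $\sigma(b)=[k_1,k_2]$) shows that no function of $b$ helps, so you must leave $\Cs(b)$, and neither suggested remedy is an argument. Neither can work on its own either:
\begin{itemize}
\item If $C$ is finite-dimensional, or $C=\mathbb{K}(\ell^2)+\mathbb{C}1$ with the (GNS-faithful) vector state at $\delta_0$, every self-adjoint element has countable spectrum. So no spectral measure has perfect support.
\item If $C$ is projectionless, every self-adjoint element has connected spectrum. So there is no element with finite spectrum of at least two points.
\end{itemize}
You would need a mechanism that works for arbitrary $C$, and the proposal contains none.

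The missing idea is to keep the element and move the vector, by conjugating with a generic unitary; this is the paper's argument.
\begin{itemize}
\item Fix a non-scalar self-adjoint $h$. For $u\in\mathcal U(C)$ one has $\sigma(u^*hu)=\sigma(h)$, while $\operatorname{supp}\mu_{u^*hu}$ is the support of the spectral measure of $\pi_\gamma(h)$ in the vector $\pi_\gamma(u)\xi_\gamma$.
\item Choose nonzero $c_n=f_n(h)\geq0$ with $f_n$ supported in the members $U_n$ of a countable base of $\sigma(h)$. Then every nonzero positive $d\in\Cs(h)$ satisfies $tc_n\leq d$ for some $n$ and some $t>0$.
\item The set $N_n=\{e\in C\mid\gamma(e^*c_ne)=0\}$ is the kernel of $e\mapsto\pi_\gamma(c_n)^{1/2}\pi_\gamma(e)\xi_\gamma$, hence a closed subspace. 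It is proper by cyclicity of $\xi_\gamma$ and faithfulness of $\pi_\gamma$.
\item Every non-empty open subset of $\mathcal U(C)$ linearly spans $C$, e.g.\ via the unitaries $(1-t^2a^2)^{1/2}\pm ita$ near $1$. Hence each $N_n\cap\mathcal U(C)$ is closed and nowhere dense in the complete metric space $\mathcal U(C)$.
\item Baire's theorem gives $u\notin\bigcup_nN_n$. Then $\gamma(u^*du)\geq t\gamma(u^*c_nu)>0$ for every nonzero positive $d\in\Cs(h)$, so $C_0:=u^*\Cs(h)u$ works.
\end{itemize}
This treats finite, perfect and mixed spectra uniformly. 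In your terms it directly produces $a=u^*hu$ with $K_a=\sigma(a)=\sigma(h)$.
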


\begin{proof}
    Choose a non-scalar, selfadjoint element $h\in C$, and denote its spectrum by $X$.
    We claim that there exists a sequence $(c_n)_{n\in\mathbb N}$ of non-zero positive elements of $\Cs(h)$ such that, for every non-zero positive element $ d\in \Cs(h)$, there exists a positive integer $n\in\mathbb N$ with $t c_n\leq d$ for some positive real number $t>0$. Indeed, since $X$ is compact metrizable, it admits a countable basis $(U_n)_{n\in\mathbb N}$ of non-empty open subsets. For each $n$, choose a non-zero function $f_n\in C(X)$ with $0\leq f_n\leq1$ and $\operatorname{supp}(f_n)\subseteq U_n$,
    and set $c_n:=f_n(h)$.
    To check the required property, write $d=g(h)$ for some non-zero positive function $g\in C(X)$, and choose $t>0$, $n\in\mathbb N$ such that
    \[
        U_n\subseteq\{x\in X\mid g(x)>t\}.
    \]
    It follows that $tf_n(x)\le g(x)$ for all $x\in X$, and thus $tc_n\le d$.

    Let $(\pi_\gamma,L^2(C,\gamma),\xi_\gamma)$ be the
GNS-representation of $(C,\gamma)$. For each $n\in\mathbb N$,
the set
\[
    N_n:=\{e\in C\mid\gamma(e^*c_ne)=0\}
\]
is the kernel of the bounded linear map $C\rightarrow\mathcal H_\gamma$, $e\mapsto \pi_\gamma(c_n)^{1/2}\pi_\gamma(e)\xi_\gamma$. Thus $N_n$ is a closed linear subspace of $C$. Furthermore, $N_n\neq C$. Indeed, if $N_n=C$ the
cyclicity of $\xi_\gamma$ implies
$\pi_\gamma(c_n)=0$ and hence $c_n=0$ by the faithfulness of $\pi_\gamma$, leading to a contradiction. 

    Next, observe that every non-empty norm-open subset of $\mathcal U(C)$ linearly spans $C$. Indeed, by translation, it suffices to check this for a neighbourhood of $1$. For every selfadjoint contraction $a\in C$ and $0<t<1$, the elements  $w_t^\pm=(1-t^2a^2)^{1/2}\pm ita$ are unitaries converging in norm to $1$ as $t\to0$, and $a=\frac{w_t^+-w_t^-}{2it}$.
    Hence every neighbourhood of $1$ linearly spans $C$, proving the observation.

    Since each $N_n$ is a proper closed linear subspace, it follows that $N_n\cap\mathcal U(C)$ is closed and has empty interior in $\mathcal U(C)$. The latter is complete in the norm metric, so Baire's theorem provides a unitary $u\in\mathcal U(C)\setminus\bigcup_{n\in\mathbb N}N_n$.
    This unitary in particular satisfies $\gamma(u^*c_nu)>0$ for every $n\in\mathbb N$. Given a non-zero positive element $d\in \Cs(h)$, find $n\in\mathbb N$ and $t>0$ with $tc_n\leq d$. Then
    \[
        \gamma(u^*du)\geq t\gamma(u^*c_nu)>0.
    \]
    Therefore, $\gamma$ is faithful on the non-trivial unital commutative \Cs-subalgebra $C_0:=u^*\Cs(h)u\subseteq C$, which concludes the proof.
\end{proof}

\begin{proof}[Proof of Theorem~\ref{AbstractCriterion-completely-selfless}]
We prove the implications in the following order: ``$\textup{(i)}\Rightarrow\textup{(iii)}\Rightarrow\textup{(ii)}\Rightarrow\textup{(i)}$''.

Let us denote by $\cal{C}$ the \Cs-algebra generated by $(S+S^*) \in\cal{B}(\ell^2)$, where $S$ is the unilateral shift on $\ell^2$, and let $\tau_\cal{C}$ be the state associated with the vector $\delta_0$. Then, $\cal{C}\cong C([-2,2])$ and under this identification $\tau_\cal{C}$ is the state with semicircular law.\\

For the implication ``$\textup{(i)}\Rightarrow\textup{(iii)}$'', suppose that the inclusion $B\subseteq(A,\rho)$
is completely selfless. By the characterisations of selfless (inclusions) from \cite{Rob25,HKEPR25}, and the fact that completely selfless embeddings are preserved by free products (see \cite[Lemma 10]{Ozawa25}), and satisfy the same permanence properties as existential embeddings, we deduce that there exists a completely existential embedding $(A,\rho)\to (A,\rho)\star(C(\mathbb T),\leb)$ that sends $C(\mathbb T)$ into $B^{\mathcal W}$ for some ultrafilter $\mathcal W$. Hence, we may use a state-preserving embedding $(\mathcal C,\tau_{\mathcal C}) \hookrightarrow(C(\mathbb T),\leb)$, and obtain an embedding $\Phi\colon(A,\rho)\star(\mathcal C,\tau_{\mathcal C})
\hookrightarrow(A^{\mathcal W},\rho^{\mathcal W})$ such that $\Phi|_A=\Delta_A$,
$\Phi(\mathcal C)\subseteq B^{\mathcal W}$ and such that the induced algebraic $\ast$-homomorphism
\[
\Phi'\colon
((A,\rho)\star(\mathcal C,\tau_{\mathcal C}))
\odot\mathcal B(\ell^2)
\longrightarrow
(A\otimes\mathcal B(\ell^2))^{\mathcal W}
\]
is continuous with respect to the minimal tensor norm; we denote its continuous extension by $\Phi'$ as well. Note that by our assumption, we have that $\Phi(\cal{C})\subseteq B^{\mathcal W}$. Put $\mathbb E_1:=\rho\otimes\mathrm{id}_{\mathcal{B}(\ell^2)}$ and $\mathbb E_2:=\tau_\cal{C}\otimes\mathrm{id}_{\mathcal{B}(\ell^2)}$. We use the canonical expectation-preserving identification
\begin{equation}\label{eq:complete-free-product-tensor}
    (A\otimes\cal{B}(\ell^2),\mathbb E_1)\star_{\cal{B}(\ell^2)}(\cal{C}\otimes\cal{B}(\ell^2),\mathbb E_2)\cong(((A,\rho)\star(\cal{C},\tau_{\cal{C}}))\otimes \cal{B}(\ell^2),(\rho\star\tau_\cal{C})\otimes\mathrm{id}_{\mathcal{B}(\ell^2)}),
\end{equation}
which restricts to the canonical embeddings of $A\otimes\cal{B}(\ell^2)$ and $\cal{C}\otimes\cal{B}(\ell^2)$ inside
$((A,\rho)\star(\mathcal C,\tau_{\mathcal C}))
\otimes\mathcal B(\ell^2)$. The continuous extension of $\Phi'$, viewed on this
amalgamated free product, restricts to the diagonal embedding on $A\otimes\mathcal B(\ell^2)$ and satisfies
\[
\mathbb E_1^{\mathcal W}\circ\Phi'
=\Delta_{\mathcal B(\ell^2)}
 \circ\bigl((\rho\star\tau_{\mathcal C})
       \otimes\mathrm{id}_{\mathcal B(\ell^2)}\bigr),
\]
where $\Delta_{\mathcal{B}(\ell^2)}$ is the diagonal embedding of $\mathcal{B}(\ell^2)$ inside $\mathcal{B}(\ell^2)^\mathcal{W}$. Indeed, if $\Phi(x)=(a_i)_{i\to\mathcal W}$ for $x\in (A,\rho)\star(\cal{C},\tau_\cal{C})$, then
\[
    \mathbb E_1^{\mathcal W}(\Phi'(x\otimes d))=(\rho(a_i)d)_{i\to\mathcal W}=\Delta_{\cal{B}(\ell^2)}((\rho\star\tau_\cal{C})(x)d)
\]
for all $d\in\cal{B}(\ell^2)$. Now, the expectation $\mathbb E:=(\rho\star\tau_\cal{C})\otimes\mathrm{id}_{\mathcal{B}(\ell^2)}$ is clearly non-degenerate. It follows that, if $\Phi'(x)=0$ for some $x\in ((A,\rho)\star(\cal{C},\tau_\cal{C}))\otimes\cal{B}(\ell^2)$, then
\[
    \Delta_{\cal{B}(\ell^2)}(\mathbb E(y^*x^*xy))=\mathbb E_1^{\mathcal W}(\Phi'(y)^*\Phi'(x)^*\Phi'(x)\Phi'(y))=0
\]
for all $y\in ((A,\rho)\star(\cal{C},\tau_\cal{C}))\otimes\cal{B}(\ell^2)$, and hence $x=0$. This proves that $\Phi'$ is injective.

Let $\mathcal F$ be the GNS-Hilbert space of $(A,\rho)\star(\cal{C},\tau_\cal{C})$ with respect to $\rho \star \tau_\mathcal{C}$, and denote the associated  canonical cyclic vector by $\Omega$. We identify $(A,\rho)\star(\cal{C},\tau_\cal{C})$ with its image in $\cal{B}(\cal{F})$, write $\cal{K}_1:=L^2(A,\rho)$, $\cal{K}_2:=L^2(\cal{C},\tau_\cal{C})$, and set $\cal{K}_1^\circ:=\cal{K}_1 \ominus \mathbb{C}\xi_\rho $, $\cal{K}_2^\circ:=\cal{K}_2 \ominus\mathbb{C} \xi _{\tau_\mathcal{C}}$.
Then, $\cal{F}$ decomposes via $\cal{F}=\mathbb C\Omega \oplus \cal{F}_1^\circ\oplus \cal{F}_2^\circ$, where 
\[
\cal{F}_i^\circ := \bigoplus_{n\ge1} \, \bigoplus_{i=i_1\neq\dots\neq i_n} \cal{K}_{i_1}^\circ\otimes \dots\otimes\cal{K}_{i_n}^\circ.
\]
Furthermore, there exists a unitary $U\colon \cal{F} \to L^2(\cal{C},\tau_\cal{C})\otimes (\mathbb C \Omega \oplus \cal{F}_1^\circ)$, which acts trivially on $\mathbb C \Omega\oplus \cal{F}_1^\circ$, and which canonically maps $\cal{F}_2^\circ$ to $\mathcal{K}_2 ^\circ \otimes (\Omega \mathbb{C} \oplus \cal{F}_1^\circ)$. 
Note also that $L^2(\cal{C},\tau_\cal{C})\cong\ell^2$ canonically, and under this identification $\tau_\cal{C}$ coincides with the vector state associated with $\delta_0 \in \ell^2$. 
The range of $S\otimes \id_{\cal{B}(\ell^2)}$ is $(\ell^{2}\ominus\mathbb{C}\delta_{0})\otimes(\mathbb{C}\Omega\oplus\mathcal{F}_{1}^{\circ})=U(\mathcal{F}_{2}^{\circ})$. Thus, denoting by $L$ the operator in $\cal{B}(\cal{F})$ given by $U^*(S\otimes \id_{\mathbb{C}\Omega \oplus \mathcal{F}_1 ^\circ})U
$, we have that $L+L^*$ is the generator of $\cal{C}$, and 
\begin{equation}\label{eq:complete-canonical-isometry}
   L^*aL=\rho(a)1 \; \text{ and } \; L^*a\Omega=0 \quad \text{ for all } a \in A.
\end{equation}
For the first identity, observe that $a-\rho(a)1_A$ maps the range of $L$ into its orthogonal complement, while the second identity follows because $A\Omega$ is orthogonal to that range.

Let $I$ be the index set of $\mathcal W$. Choose an infinite cardinal $\mu\geq\max\{|I|,\dim(\mathcal F), \aleph_0\}$, put $\kappa:=2^\mu$, and choose a set $J$ of cardinality $\kappa$ together with $j_0\in J$. The triple $(\pi,\mathcal H,\xi)$ with $\mathcal H:=\mathcal F\otimes\ell^2(J)$, $\pi:= \id_A \otimes 1_{\mathcal{B}(\ell^2(J))}$ and $\xi:=\Omega\otimes\delta_{j_0}$ is a unital faithful representation of $(A,\rho)$. We define $V\colon \ell^2 \to \cal{H}\otimes\ell^2$ via $V\eta:=\xi\otimes \eta$ for $\eta \in \ell^2$, and observe that $V^*xV=\mathbb E(x)$ for every $x\in ((A,\rho)\star(\cal{C},\tau_\cal{C}))\otimes\cal{B}(\ell^2)$.

Define two faithful unital $\ast$-representations of $((A,\rho)\star(\cal{C},\tau_\cal{C}))\otimes\cal{B}(\ell^2)$ on $(\cal{H}\otimes\ell^2)^{\mathcal W}$ by $\alpha(x):=(x)_{i\to\mathcal W}$ and $\beta(x):=(\pi\otimes\mathrm{id}_{\mathcal B(\ell^2)})^{\mathcal W}(\Phi'(x))$ for $x \in ((A,\rho)\star(\cal{C},\tau_\cal{C}))\otimes\cal{B}(\ell^2)$. These representations agree on $A\otimes\cal{B}(\ell^2)$. Furthermore, expectation preservation gives
\begin{equation}\label{eq:complete-common-compression}
    (V^{\mathcal W})^*\alpha(x)V^{\mathcal W}=\mathbb E(x)^{\mathcal W}=(V^{\mathcal W})^*\beta(x)V^{\mathcal W}
\end{equation}
for all $x\in((A,\rho)\star(\cal{C},\tau_\cal{C}))\otimes\cal{B}(\ell^2)$. The subspaces
\begin{align*}
    \mathcal{H}_\alpha:&= \overline{\operatorname{span}}\{\alpha(x)V^{\mathcal W}\eta \mid x\in ((A,\rho)\star(\cal{C},\tau_\cal{C}))\otimes\cal{B}(\ell^2),\ \eta \in (\ell^2)^{\mathcal W}\},\\
    \mathcal H_\beta:&= \overline{\operatorname{span}}\{\beta(x)V^{\mathcal W}\eta \mid x\in ((A,\rho)\star(\cal{C},\tau_\cal{C}))\otimes\cal{B}(\ell^2),\ \eta \in (\ell^2)^{\mathcal W}\}
\end{align*}
reduce the respective representations $\alpha$ and $\beta$. An application of \eqref{eq:complete-common-compression} to $y^*x$ shows that the assignment $u_0\colon\mathcal H_\beta\to\mathcal H_\alpha$ given by $\beta(x)V^{\mathcal W}\eta\mapsto\alpha(x)V^{\mathcal W} \eta$ for $x\in ((A,\rho)\star(\cal{C},\tau_\cal{C}))\otimes\cal{B}(\ell^2)$, $\eta \in (\ell^2)^\mathcal{W}$
extends linearly to a well-defined unitary. It satisfies $u_0V^{\mathcal W}=V^{\mathcal W}$ and intertwines the restrictions of $\beta$ and $\alpha$ to these subspaces. In the same way as in the proof of Proposition \ref{Direction1}, one verifies that for all $0 \neq x\in((A,\rho)\star(\cal{C},\tau_\cal{C}))\otimes\cal{B}(\ell^2)$,
\begin{eqnarray*}
\operatorname{rank}(\alpha(x)|_{\mathcal H_\alpha^\perp})=\operatorname{rank}(\beta(x)|_{\mathcal H_\beta^\perp})=\kappa.
\end{eqnarray*}
Theorem~\ref{HadwinTheorem} hence gives a net of unitaries $(u_\lambda)_{\lambda \in \Lambda} \subseteq \mathcal{B}(\mathcal H_\beta^\perp,\mathcal H_\alpha^\perp)$ with $(u_0\oplus u_\lambda) V^{\mathcal W}=V^{\mathcal W}$ and
$\lim_{\lambda\in \Lambda}\|(u_0\oplus u_\lambda)\beta(x)(u_0\oplus u_\lambda)^\ast -\alpha(x)\|=0$ for all $x\in((A,\rho)\star(\cal{C},\tau_\cal{C}))\otimes\cal{B}(\ell^2)$.

Choose a cofinal ultrafilter $\mathcal V$ on the directed set $\Lambda$ and put $\mathcal U:=\mathcal W\otimes\mathcal V$. Under the identification $((\cal{H}\otimes\ell^2)^{\mathcal W})^{\mathcal V}=(\cal{H}\otimes\ell^2)^{\mathcal U}$, the family $(u_0 \oplus u_\lambda)_{\lambda \in \Lambda}$ induces a unitary $w\in\mathcal B((\cal{H}\otimes\ell^2)^{\mathcal U})$. Let $\widetilde\Delta\colon (A\otimes\cal{B}(\ell^2))^{\mathcal W}\to (A\otimes\cal{B}(\ell^2))^{\mathcal U}$ be the diagonal embedding into the outer ultrapower, and put $\Theta:=\widetilde\Delta\circ\Phi'$. The preceding identities give
\begin{equation}\label{eq:complete-intertwining}
    w\,(\pi\otimes\mathrm{id}_{\mathcal B(\ell^2)})^{\mathcal U}
   (\Theta(x))\,w^*=x^{\mathcal U}\quad\text {and} \quad  wV^{\mathcal U}=V^{\mathcal U}
\end{equation}
for all $x\in((A,\rho)\star(\cal{C},\tau_\cal{C}))\otimes\cal{B}(\ell^2)$.
Since $\Theta|_{A\otimes\cal{B}(\ell^2)}=\Delta$, the unitary $w$ commutes with $(\pi(a)\otimes d)_{i\to\mathcal{U}}$ for all $a\in A$, $d\in \mathcal{B}(\ell^2)$.
Set now $T:=w^*(L\otimes\id_{\ell^2(J)} \otimes\id_{\ell^2})^{\mathcal U}w$, and fix a unit vector $\eta\in \ell^2$. Then $T$ is an isometry with $[T,(\id_\mathcal{H} \otimes d)_{i\to\mathcal{U}}]=0$ for every $d\in \mathcal{B}(\ell^2)$. Moreover, \eqref{eq:complete-canonical-isometry} and \eqref{eq:complete-intertwining} give
\begin{align*}
T^*\,\bigl(((\pi\otimes 1_{\cal{B}(\ell^2)})^{\mathcal{U}}\circ \Delta)(a)\bigr)\,T &= \rho(a)\,1, \\
T^*\,\bigl(((\pi\otimes 1_{\cal{B}(\ell^2)})^{\mathcal{U}}\circ \Delta)(a)\bigr)\,(\xi\otimes \eta)^\mathcal{U} &= 0, 
\end{align*}
for all $a\in A$. Writing $t=S+S^*\in\mathcal C$, we also have
\[
T+T^*
=(\pi\otimes\mathrm{id}_{\mathcal B(\ell^2)})^{\mathcal U}
   \bigl(\Theta(t\otimes1_{\mathcal B(\ell^2)})\bigr)
\in
(\pi\otimes1_{\mathcal B(\ell^2)})^{\mathcal U}
   (B^{\mathcal U}).
\]
Indeed, $\Phi'(t\otimes1_{\mathcal B(\ell^2)})$ is the
coordinatewise image of $\Phi(t)\in B^{\mathcal W}$, and
the outer diagonal embedding preserves this image. Thus all assertions in \textup{(iii)} hold. \\

For the implication ``$\textup{(iii)}\Rightarrow\textup{(ii)}$'', suppose that \textup{(iii)} holds. Fix any unit vector $\eta\in\ell^2$, and set $\zeta:=\xi\otimes\eta \in \mathcal{H} \otimes \ell^2$, and $p:=TT^*$.
For every $b\in A^{\mathcal U}$ we then have
\begin{equation}\label{eq:complete-vector-state}
    \langle \zeta^\mathcal{U},(\pi\otimes 1_{\mathcal{B}(\ell^2)})^{\mathcal U}(b)\zeta^\mathcal{U}\rangle
=\rho^{\mathcal U}(b).
\end{equation}
The last condition in \textup{(iii)}, applied to $a=1$, gives $T^*\zeta^{\mathcal{U}}=0$, and hence $p^{\perp}\zeta^{\mathcal{U}}=\zeta^{\mathcal{U}}$. Let $b\in B^{\mathcal U}$ be the unique selfadjoint element satisfying $(\pi\otimes1_{\mathcal B(\ell^2)})^{\mathcal U}(b)
  =T+T^*$. Faithfulness of the amplified representation gives $\|b\|\leq2$, so that continuous functional calculus defines
a unital $\ast$-homomorphism
$\theta_0\colon\mathcal C\to B^{\mathcal U}$
by $\theta_0(f):=f(b)$. The identities $T^*T=1$ and $p^{\perp}T=0=T^*p^{\perp}$ imply
\begin{equation}\label{eq:complete-shift-compression}
    p^{\perp}f(T+T^*)p^{\perp}=\tau_\cal{C}(f)p^{\perp}
\end{equation}
for all $f\in \cal{C}$.
Indeed, for polynomials this follows by reducing to words of the form $T^m(T^*)^n$ and comparing with the vacuum compression of $f(S+S^*)$; the general case follows by uniform approximation. Combining \eqref{eq:complete-vector-state} and \eqref{eq:complete-shift-compression}, we obtain
\[
    \rho^{\mathcal U}(\theta_0(f))
=\langle \zeta^\mathcal{U},f(T+T^*)\zeta^{\mathcal{U}}\rangle
=\tau_{\mathcal C}(f) \quad \text{ for } f\in \mathcal{C}. \]
Since $\tau_\cal{C}$ is faithful, $\theta_0$ is injective. Thus $(\cal{C},\tau_\cal{C})$ is a non-trivial \Cs-probability space with a state-preserving embedding into $(B^{\mathcal U},\rho^{\mathcal U})$. Furthermore,
\begin{align*}
   p\,\bigl(((\pi\otimes 1_{\cal{B}(\ell^2)})^{\mathcal{U}}\circ \Delta) (a)\bigr)\,p & =T\bigl(T^*(((\pi\otimes 1_{\cal{B}(\ell^2)})^{\mathcal{U}} \circ  \Delta) (a))T\bigr)T^*=\rho(a)p,\\
 p\,\bigl(((\pi\otimes 1_{\cal{B}(\ell^2)})^{\mathcal{U}}\circ \Delta)(a)\bigr)\,\zeta^\mathcal{U} & =T\bigl(T^*(((\pi\otimes 1_{\cal{B}(\ell^2)})^{\mathcal{U}} \circ \Delta)(a))\zeta^\mathcal{U}\bigr)=0.
\end{align*}
The commutation condition for $p$ follows from that for $T$, and the remaining compression identity is \eqref{eq:complete-shift-compression}. This proves \textup{(ii)}.\\

For the implication ``$\textup{(ii)}\Rightarrow\textup{(i)}$'', suppose that \textup{(ii)} holds, and write again $\mathbb E_1:=\rho\otimes\mathrm{id}_{\mathcal{B}(\ell^2)}$, $\mathbb E_2:=\gamma\otimes\mathrm{id}_{\mathcal{B}(\ell^2)}$, and $\zeta:=\xi\otimes\eta$, where $\eta\in\ell^2$ is a fixed unit vector. Both expectations are non-degenerate because $\rho$ and $\gamma$ are GNS-faithful. Note that the ranges of the $\ast$-homomorphisms $(\id_A\otimes \id_{\ell^2})^{\cal{U}} \circ \theta$ and $\Delta\circ(1_A\otimes\mathrm{id}_{\mathcal B(\ell^2)})$ commute inside  $(A\otimes\cal{B}(\ell^2))^{\mathcal U}$. By Lemma \ref{lem:nuclear-probability-subspace}, after passing to a non-trivial \Cs-probability subspace, we can assume that $C$ is Abelian, and thus nuclear. Therefore, there is a well-defined unital $\ast$-homomorphism on the spatial tensor product:
\[
    \theta'\colon C\otimes\cal{B}(\ell^2) \longrightarrow (A\otimes\cal{B}(\ell^2))^{\mathcal U},\quad \theta'(c\otimes d)=(\id_A\otimes \id_{\ell^2})^\mathcal{U}(\theta(c))\cdot \Delta(1_A\otimes d)
\]
for all $c\in C$ and $d\in\cal{B}(\ell^2)$.
Define
\[
    \sigma_1:=(\pi\otimes \id_{\mathcal{B}(\ell^2)})^{\mathcal U}\circ\Delta\colon A\otimes\cal{B}(\ell^2) \to\mathcal B((\mathcal H\otimes \ell^2)^{\mathcal U})
\]
and 
\[
    \sigma_2:=(\pi\otimes \id_{\mathcal{B}(\ell^2)})^{\mathcal U}\circ\theta'\colon C\otimes\cal{B}(\ell^2)\to\mathcal B((\mathcal H\otimes \ell^2)^{\mathcal U}).
\]
These representations agree on $\cal{B}(\ell^2)$. The commutation condition for $p$ and the compression identities in \textup{(ii)} give
\begin{align*}
    p\sigma_1(a\otimes d)p &=\rho(a)(\pi\otimes\id_{\mathcal{B}(\ell^2)})^{\cal{U}}(\Delta(1_A\otimes d))p, \\
    p^{\perp}\sigma_2(c\otimes d)p^{\perp} &=\gamma(c)(\pi\otimes\id_{\mathcal{B}(\ell^2)})^{\cal{U}}(\Delta(1_A\otimes d))p^{\perp}.
\end{align*}
By linearity and continuity, we obtain
\[
    p\sigma_1(x)p=(\pi\otimes\id_{\mathcal{B}(\ell^2)})^{\cal{U}}(\Delta(\mathbb E_1(x)))p,\qquad p^{\perp}\sigma_2(y)p^{\perp}=(\pi\otimes\id_{\mathcal{B}(\ell^2)})^{\cal{U}}(\Delta(\mathbb E_2(y)))p^{\perp}
\]
for all $x\in A\otimes\cal{B}(\ell^2)$ and $y\in C\otimes\cal{B}(\ell^2)$.
Using the notation $D:=(A\otimes\cal{B}(\ell^2),\mathbb E_1)\star_{\cal{B}(\ell^2)}(C\otimes\cal{B}(\ell^2),\mathbb E_2)$ and $\mathbb{E}:=\mathbb{E}_1 \star \mathbb{E}_2$, we can apply Hasegawa's universal property in \cite[Corollary~4.2.7]{Hasegawa18} to obtain a unital $\ast$-homomorphism $\widehat\sigma\colon\Delta\mathbf T(D,\mathbb E)\longrightarrow\mathcal B((\mathcal H\otimes \ell^2)^{\mathcal U})$ which restricts to $\sigma_1$ and $\sigma_2$ on the two factors and maps the canonical projection to $p$. The restriction $\sigma:=\widehat\sigma|_D$ takes values in $(\pi\otimes\id_{\cal{B}(\ell^2)})^\cal{U}((A\otimes\cal{B}(\ell^2))^{\mathcal U})$ because $D$ is generated by the embedded copies of $A\otimes\cal{B}(\ell^2)$ and $C\otimes\cal{B}(\ell^2)$, and both of their images lie in this \Cs-subalgebra. Composing with the inverse of $(\pi\otimes\id_{\mathcal{B}(\ell^2)})^{\mathcal U}$ on its range, and using the canonical identification in \eqref{eq:complete-free-product-tensor} with $(C,\gamma)$ in place of $(\cal{C},\tau_\cal{C})$, gives a unital $\ast$-homomorphism
\[
    \Psi
    :=\bigl((\pi\otimes
       \id_{\mathcal{B}(\ell^2)})^{\mathcal U}\bigr)^{-1}
       \circ\sigma\colon ((A,\rho)\star(C,\gamma))\otimes \cal{B}(\ell^2)\longrightarrow (A\otimes\cal{B}(\ell^2))^{\mathcal U},
\]
which restricts to $\Delta$ on $A\otimes \cal{B}(\ell^2)$ and to $\theta'$ on $C\otimes \cal{B}(\ell^2)$.
The image $\Psi((A,\rho)\star(C,\gamma)\otimes \id_{\ell^2})$ lies in $(\id_A\otimes \id_{\ell^2})^{\mathcal U}(A^{\mathcal U})$, since this holds on the generating copies of $A$ and $C$. We may therefore define
\[
    \Phi\colon (A,\rho)\star(C,\gamma)\longrightarrow A^{\mathcal U},\qquad \Phi(x):=((\id_A\otimes \id_{\ell^2})^{\mathcal U})^{-1}(\Psi(x\otimes \id_{\ell^2})).
\]
This is a unital $\ast$-homomorphism with $\Phi|_A=\Delta_A$ and $\Phi|_C=\theta$. In particular, the latter condition also implies that $\Phi(C)\subseteq B^{\mathcal U}$.
We now verify that $\Phi$ is state-preserving. For this, write $\alpha=(\pi\otimes \id_{\ell^2})^{\mathcal U}\circ \Delta_A$ and $\beta=(\pi\otimes \id_{\ell^2})^{\mathcal U}\circ\theta$. As in \eqref{eq:complete-vector-state}, $\rho^{\mathcal U}(b)
=\langle \zeta^{\mathcal{U}},
   (\pi\otimes 1_{\cal{B}(\ell^2)})^{\mathcal U}(b)\zeta^{\mathcal{U}}\rangle$ for every $b\in A^{\mathcal U}$. The last condition in \textup{(ii)} gives $\alpha(A)\zeta\subseteq p^{\perp}(\mathcal H\otimes \ell^2)^{\mathcal U}$, while the two compression identities imply
\[
    \alpha(\ker(\rho))\,p(\mathcal H\otimes \ell^2)^{\mathcal U}\subseteq p^{\perp}(\mathcal H\otimes \ell^2)^{\mathcal U},\qquad \beta(\ker(\gamma))\,p^{\perp}(\mathcal H\otimes \ell^2)^{\mathcal U}\subseteq p(\mathcal H\otimes \ell^2)^{\mathcal U}.
\]
Let $n\geq1$, $a_0,a_n\in A$, $a_1,\ldots,a_{n-1}\in\ker(\rho)$, and $c_1,\ldots,c_n\in\ker(\gamma)$. The inclusions above imply that
\[
    v:=\beta(c_1)\alpha(a_1)\cdots\beta(c_n)\alpha(a_n)\zeta\in p(\mathcal H\otimes \ell^2)^{\mathcal U}.
\]
Since $\alpha(a_0^*)\zeta\in p^{\perp}(\mathcal H\otimes \ell^2)^{\mathcal U}$, it follows that
\[
    \rho^{\mathcal U}(\Phi(a_0c_1a_1\cdots c_na_n))
=\langle \alpha(a_0^*)\zeta,v\rangle
=0.
\]
Each such word has free product state zero, and these words, together with $A$, linearly span a dense subspace of $(A,\rho)\star(C,\gamma)$. Since $\rho^{\mathcal U}\circ\Phi$ restricts to $\rho$ on $A$, this proves state preservation.

If $\Phi(x)=0$, then $(\rho\star\gamma)(y^*x^*xy)=0$ for every $y\in (A,\rho)\star(C,\gamma)$, and thereby $x=0$, since $\rho\star\gamma$ is GNS-faithful. Therefore $\Phi$ is an embedding of \Cs-probability spaces.
Note that, by definition, we have 
\[
    \Psi(x\otimes d)=(\id_A\otimes \id_{\ell^2})^{\mathcal U}(\Phi(x))\cdot \Delta(1_A\otimes d)
\]
for all $x\in(A,\rho)\star(C,\gamma)$ and $d\in\cal{B}(\ell^2)$.
Consequently, $\Psi$ is a continuous extension of the algebraic tensor map $\Phi\odot\id_{\mathcal{B}(\ell^2)}$.

Finally, we explain why it is enough to show complete selflessness with respect to tensor products with $\cal{B}(\ell^2)$.
Let $E$ be any \Cs-algebra and fix $z=\sum_{k=1}^n x_k\otimes e_k\in ((A,\rho)\star(C,\gamma))\odot E$. The separable \Cs-subalgebra $E_0:=\Cs(\{e_1,\ldots,e_n\})\subseteq E$ admits a faithful representation $\iota\colon E_0\hookrightarrow \cal{B}(\ell^2)$. The algebraic tensor maps satisfy
\[
    (\mathrm{id}_A\otimes \iota)^{\mathcal U}\circ (\Phi\odot\id_{E_0})=\Psi \circ (\mathrm{id}_{(A,\rho)\star(C,\gamma)}\odot\iota).
\]
Since minimal tensor products and ultrapowers preserve embeddings, it follows that
\begin{align*}
    \|(\Phi\odot\id_{E})(z)\|&=\|(\Phi\odot\id_{E_0})(z)\| \\
    &=\|\Psi((\mathrm{id}_{(A,\rho)\star(C,\gamma)}\odot\iota)(z))\| \\
    &\leq\|z\|_{((A,\rho)\star(C,\gamma))\otimes E_0} \\
    &=\|z\|_{((A,\rho)\star(C,\gamma))\otimes E}.
\end{align*}
Thus $\Phi\odot\mathrm{id}_E$ is continuous for the minimal tensor norm for every $\Cs$-algebra $E$. Since $\Phi(C)\subseteq B^{\mathcal U}$, the embedding $\Phi$ witnesses complete selflessness of the inclusion $B\subseteq(A,\rho)$.
\end{proof}

\begin{remark} \label{OzawaRemark}
(i) In the second and third assertion of
Theorem~\ref{AbstractCriterion-completely-selfless},
if the conditions hold for one unit vector
$\eta_0\in\ell^2$, they hold for every unit vector
$\eta\in\ell^2$, with all other data unchanged. Indeed, choose a unitary $d\in\mathcal B(\ell^2)$
satisfying $d\eta_0=\eta$.
By the commutation hypotheses, the operator
$(\id_{\mathcal H}\otimes d)^{\mathcal U}$
commutes with $p$ in \textup{(ii)}, respectively with
$T^*$ in \textup{(iii)}, and it also commutes with $(\pi\otimes1_{\mathcal B(\ell^2)})^{\mathcal U} (\Delta_A(a))$ for all $a \in A$. Applying this operator to the last identity in the
respective assertion gives the same identity with
$\eta$ in place of $\eta_0$, while the remaining conditions do not involve $\eta$.

(ii) Let $(A,\rho)$ be a \Cs-probability space. Ozawa's criterion
in \cite[Theorem~13]{Ozawa25} states that $(A,\rho)$ is
completely selfless if there exist a \Cs-probability space
$(B,\psi)$ with a unital inclusion $A\subseteq B$ satisfying
$\psi|_A=\rho$, an ultrafilter $\mathcal U$ over $I$, and an isometry
$T\in B^{\mathcal U}$ such that $T+T^*\in A^{\mathcal U}$ and $T^*aT=\rho(a)1_A$, $\psi^{\mathcal U}(aTT^*a^*)=0$
for all $a\in A$. We explain how these hypotheses imply assertion
(iii) of
Theorem~\ref{AbstractCriterion-completely-selfless}
for the inclusion $A\subseteq(A,\rho)$.

Choose a faithful representation $(\pi,\mathcal H,\xi)$
of $(B,\psi)$, fix a unit vector $\eta\in\ell^2$, and put $\widehat T :=(\pi\otimes1_{\mathcal B(\ell^2)})^{\mathcal U}(T)$ and $\zeta:=\xi\otimes\eta$.
Then $\widehat T$ is an isometry on
$(\mathcal H\otimes\ell^2)^{\mathcal U}$ and commutes with
$(\mathrm{id}_{\mathcal H}\otimes d)_{i\to\mathcal U}$
for every $d\in\mathcal B(\ell^2)$. Moreover,
\begin{eqnarray*}
\widehat T+\widehat T^* \in \bigl((\pi|_A)\otimes1_{\mathcal B(\ell^2)}\bigr)^{\mathcal U} (A^{\mathcal U}).
\end{eqnarray*}
Finally, $$\widehat T^* (\pi(a)\otimes1_{\mathcal B(\ell^2)})^{\mathcal U} \widehat T =\rho(a)1$$
and $$\bigl\| \widehat T^* (\pi(a)\otimes1_{\mathcal B(\ell^2)})^{\mathcal U} \zeta^{\mathcal{U}} \bigr\|^2 =\psi^{\mathcal U}(a^*TT^*a)=0$$ for every $a\in A$. Thus $(\pi|_A,\mathcal H,\xi)$,
$\eta$, and $\widehat T$ satisfy all conditions in
assertion (iii). Consequently, our
characterisation recovers Ozawa's criterion without
invoking the universal property of the reduced free
product with the Toeplitz algebra.
\end{remark}

\vspace{3mm}


\section{Reduced (amalgamated) free products with a Haar unitary}

In this section, we apply Theorem~\ref{AbstractCriterion-completely-selfless} to prove Theorem~\ref{thm:amalgamated-free-product}, establishing selflessness of inclusions of \Cs-probability spaces arising from
suitable reduced amalgamated free products.
Even for reduced free products, our result generalises earlier
work; see Corollary~\ref{thm:free-product}.

\begin{proof}[Proof of Theorem \ref{thm:amalgamated-free-product}]
For notational convenience, write $\gamma:=\rho_2|_C$, $\mathbb{E}:=\mathbb{E}_1 \star \mathbb{E}_2$, and $A_i^\circ :=\ker(\mathbb{E}_i)$ for $i=1,2$. Since $\rho_i =\delta\circ \mathbb{E}_i$ is GNS-faithful, each $\mathbb{E}_i$ is non-degenerate. Moreover, the canonical conditional expectation $\mathbb{E}_{A_1}\colon A\to A_1$ is non-degenerate and satisfies $\mathbb{E}=\mathbb{E}_1\circ \mathbb{E}_{A_1}$. Hence the state $\rho=\rho_1\circ \mathbb{E}_{A_1}$ is GNS-faithful.

We identify $A$ with its image inside $\mathcal{B}(\mathcal{F})$, where $\mathcal{F}:=L^2(A,\rho)$, and describe the decomposition of $\mathcal{F}$ into reduced-word subspaces. For this, put $\Omega:=\xi_\rho$ and $\mathcal H_\delta=\overline{D\Omega} \subseteq \mathcal{F}$. By $\rho|_D=\delta$, this subspace canonically identifies with the GNS-Hilbert space $L^2(D,\delta)$, where $\Omega$ corresponds to its distinguished cyclic vector.

    For every alternating sequence $\boldsymbol i=(i_1,\dots,i_r)$ with $r\geq1$, $i_1,\ldots,i_r \in\{1,2\}$ and $i_1\neq \ldots \neq i_{r}$, define
    \[
        \mathcal{F}_{\boldsymbol i} := \overline{\operatorname{span}} \bigl\{ a_1\cdots a_r\Omega \mid a_1\in A_{i_1}^\circ,\ldots ,a_r \in A_{i_r}^\circ  \bigr\}.
    \]
For distinct alternating sequences, these subspaces are mutually orthogonal, and each of them is orthogonal to $\mathcal H_\delta$. 
Moreover, $D$ and the reduced words in $A$ linearly span a norm-dense subspace of $A$, so that
    \[
        \mathcal{F} = \mathcal H_\delta \oplus \bigoplus_{r\geq 1} \bigoplus_{\ i_1 \neq \dots \neq i_r} \mathcal{F}_{(i_1,\dots,i_r)}.
    \]
    Grouping the summands according to their first index, we obtain $\mathcal{F} = \mathcal H_\delta \oplus\mathcal{F}_1^\circ \oplus\mathcal{F}_2^\circ$, where
$\mathcal{F}_i^\circ = \bigoplus _{r\geq 1} \bigoplus_{\ i= i_1 \neq \ldots \neq i_r} \mathcal{F}_{(i_1,\dots,i_r)}$ is the space spanned by reduced-word vectors starting in $A_i^\circ$. Denote by $p_0$ the projection onto $\mathcal F_2^\circ$ and note that $p_0d=dp_0$ for every $d\in D$.
    As a result, for every $a\in A_1$ and $b\in A_2$, we have that
    \[
        p_0ap_0=\mathbb{E}_1(a)p_0,\quad p_0^\perp b p_0^\perp=\mathbb{E}_2(b)p_0^\perp,\quad p_0a\Omega=0.
    \]

For each $i \in I$ define $p_i=v_i p_0v_i^*  \, \in \cal{B}(\cal{F})$ and a faithful $\ast$-homomorphism $\theta_i \colon C\to B$ via $\theta_i(c)=v_icv_i^*$ for $c\in C$, where $B:=\Cs(C\cup\{v_i\mid i\in I\})$. Using the assumptions, we have
    \begin{align*}
        p_i^\perp\theta_i(c) p_i^\perp =v_i (p_0^\perp c p_0^\perp) v_i^*=\gamma(c) p_i^\perp
    \end{align*}
and $p_i \Omega=v_i p_0v_i^*\Omega=0$ for all $i \in I$, $c\in C$. In particular, by $\rho(\theta_i(c))
=\langle \Omega,
   p_i^\perp \theta_i(c) p_i^\perp \Omega\rangle
=\gamma(c)$, each $\theta_i$ is state-preserving.

    We claim that, for every $x\in A$,
\begin{eqnarray} \label{limits}
        \|p_ixp_i-\rho(x)p_i\|\longrightarrow 0, \quad \|p_ix\Omega\|\longrightarrow 0.
\end{eqnarray}
    In order to check these two norm limit statements, first take $a\in A_1$. Then,
    \[
        p_i ap_i  =v_i p_0(v_i ^*av_i )p_0v_i^* =v_i \mathbb{E}_1(v_i^*av_i)p_0v_i^*
    \]
and hence, by our assumptions,
    \[
        \|p_i ap_i -\rho(a)p_i \| \leq\|\mathbb{E}_1(v_i^*av_i)-\rho_1(a)1_D\| \longrightarrow 0.
    \]
Furthermore, $p_i a\Omega=v_i p_0(v_i^*a)\Omega=0$.

    Now take a word $x=a_0b_1a_1\dots b_{r-1}a_{r-1}b_ra_r$, where $r\geq1$, $a_0,a_r\in A_1$, $a_1,\ldots a_{r-1}\in A_1^\circ$, and $b_1,\ldots b_r\in A_2^\circ$. Such words, together with $A_1$, linearly span a norm-dense subspace of $A$. Expanding $a_0$ and $a_r$ into their $\mathbb{E}_1$-centered parts shows that $\mathbb{E}(x)=0$, and hence $\rho(x)=0$. Put $w:=b_1a_1\dots b_{r-1}a_{r-1}b_r$, and write $v_i^*a_0=\alpha_i+c_i$ and $a_rv_i=\beta_i+e_i$,
    where $\alpha_i:=\mathbb{E}_1(v_i^*a_0)$ and $\beta_i=\mathbb{E}_1(a_rv_i)$ belong to $D$, and $c_i,e_i\in A_1^\circ$. The hypotheses give
    \[
        \|\alpha_i\|\longrightarrow 0,\quad \|\beta_i\| =\|\mathbb{E}_1(v_i^*a_r^*)\|\longrightarrow 0.
    \]
    Moreover, $\|c_i\|\leq2\|a_0\|$ and $\|e_i \|\leq2\|a_r\|$ for every $i \in I$.
    We have
    \[
        v_i^*xv_i =\alpha_iw\beta_i+\alpha_iwe_i +c_iw\beta_i+c_iwe_i,
    \]
where the word $c_i we_i$ starts and ends with letters in $A_1^\circ$, and therefore maps $p_0\mathcal{F}$ into $\im(p_0^\perp)$. In particular, $p_0c_iwe_ip_0=0$. Combining these observations, we obtain
    \begin{align*}
        \|p_ixp_i\| &=\|p_0v_i^*xv_ip_0\|\\
        &\leq\|w\|\bigl( \|\alpha_i\|\|\beta_i\| +2\|a_r\|\|\alpha_i\| +2\|a_0\|\|\beta_i\| \bigr) \longrightarrow 0.
    \end{align*}
By linearity and norm density, this implies the first limit in \eqref{limits}.

    To check the second one, observe that $wa_r\Omega$ belongs to $\mathcal{F}_2^\circ$. Since $c_i\in A_1^\circ$ and $\alpha_i\in D$ for every $i \in I$, it follows that
    \[
        p_0v_i^*x\Omega =p_0(\alpha_i+c_i)wa_r\Omega =\alpha_iwa_r\Omega.
    \]
    Hence,
    \[
        \|p_i x\Omega\| =\|p_0v_i^*x\Omega\| \leq\|\alpha_i\|\|w\|\|a_r\| \longrightarrow 0.
    \]
    By linearity, norm density, and the uniform bound $\|p_i\|=1$, we obtain the second limit in \eqref{limits}, proving the claim.

    Now, define $q_i:= p_i \otimes 1_{\cal{B}(\ell^2)}$ for every $i \in I$ and note that $[q_i,1_{\cal{B}(\cal{F})}\otimes T]=0$ for all $T\in\cal{B}(\ell^2)$. Furthermore, we have that 
    \begin{align*}
        \|q_i (a\otimes 1_{\cal{B}(\ell^2)}) q_i-\rho(a)q_i\| &=\|p_i ap_i -\rho(a)p_i\|\longrightarrow 0 \\
        q_i^{\perp}(\theta_i(c) \otimes1_{\cal{B}(\ell^2)}) q_i^{\perp}&=\gamma(c)q_i^{\perp} \\
        \|q_i(a\otimes 1_{\cal{B}(\ell^2)})(\Omega\otimes\eta)\|&=\|p_i a\Omega\| \longrightarrow 0
    \end{align*}
    for all $a\in A$, $c\in C$, and any unit vector $\eta\in\ell^2$.
    
Finally, choose a cofinal ultrafilter $\mathcal{U}$ on $I$, let $q:=(q_i)_{i \rightarrow \mathcal{U}}$, viewed as a projection in $\mathcal{B}((\mathcal{F}\otimes \ell^2)^{\mathcal{U}})$, and define $\theta\colon C\to B^{\mathcal{U}}$ by $\theta(c)=(\theta_i(c))_{i \rightarrow \mathcal{U}}$ for $c\in C$. Since each $\theta_i$ is a unital, isometric, state-preserving $\ast$-homomorphism, $\theta$ is a unital, state-preserving $\ast$-embedding. In particular, the restriction of $\rho^{\mathcal{U}}$ to $\theta(C)$ is GNS-faithful. Moreover, the identities and limits established above give
\begin{align*}
q\,\bigl(((\pi_\rho \otimes 1_{\cal{B}(\ell^2)})^{\mathcal{U}}\circ \Delta)(x)\bigr) \,q &= \rho(x) \, q,\\
q^{\perp} \, \bigl(((\pi_\rho \otimes 1_{\cal{B}(\ell^2)})^\mathcal{U} \circ \theta)(c)\bigr) \, q^{\perp} &= \gamma(c) \, q^{\perp}, \\
q \, \bigl(((\pi_\rho \otimes1_{\cal{B}(\ell^2)})^{\mathcal{U}}\circ \Delta)(x)\bigr)(\Omega\otimes \eta)^\mathcal{U} &= 0
\end{align*}
for all $x\in A$ and $c\in C$, where $\pi_\rho$ denotes the GNS-representation of $(A,\rho)$. Viewing $\theta$ as an embedding into $A^{\mathcal U}$, the preceding identities imply with assertion \textup{(ii)} of Theorem \ref{AbstractCriterion-completely-selfless} that $(A,\rho)$ is completely selfless.

If, in addition, $\rho|_B$ is GNS-faithful, then
$B\subseteq(A,\rho)$ is an inclusion of
$\Cs$-probability spaces. By employing assertion \textup{(ii)} of Theorem \ref{AbstractCriterion-completely-selfless}   once again, we obtain that $B\subseteq(A,\rho)$ is completely selfless.
\end{proof}

We are now ready to prove the generalisation of  \cite[Theorem C]{HKER25}.

\begin{proof}[Proof of Corollary \ref{thm:free-product}]
Set $D:=\mathbb C$, $\mathbb E_1:=\psi$,
$\mathbb E_2:=\rho$, and $v_n:=u^n$ for $n\in\mathbb N$.
By $u\in B^\psi$, we have $\psi(u^{-n}bu^n)=\psi(b)$ for all $b\in B$, $n\in\mathbb N$. Moreover, the Haar condition implies that
$(u^n\xi_\psi)_{n\in\mathbb N}$ is an orthonormal sequence, so that for every $b\in B$,
\[
    |\psi(u^{-n}b)|
    =|\langle u^n\xi_\psi,b\xi_\psi\rangle|
    \longrightarrow0.
\]
Thus both limit conditions in
Theorem~\ref{thm:amalgamated-free-product} hold,
and its condition on $C$ is automatic.

It remains to verify the GNS-faithfulness required for
the inclusion conclusion. Since $u$ is a Haar unitary, $(\Cs(u),\psi|_{\Cs(u)})
    \cong(C(\mathbb T),\leb)$. The canonical embeddings induce
a state-preserving $\ast$-embedding $(\Cs(u),\psi|_{\Cs(u)})\star(C,\rho|_C)
    \hookrightarrow(B,\psi)\star(A,\rho)$ with image $\Cs(C\cup\{u\})$.
The restricted state therefore identifies with the
GNS-faithful free-product state on the source.
The additional assertion of
Theorem~\ref{thm:amalgamated-free-product}
now yields the claimed complete selflessness.
\end{proof}

We now apply our results to free Araki--Woods
\Cs-algebras by establishing complete selflessness with respect to the vacuum state whenever the underlying real Hilbert space has dimension at least two. This strengthens the selflessness results of \cite{HKER25} and covers the two-dimensional case associated with non-trivial orthogonal representations.
In the two-dimensional case, one can also show selflessness (but not complete selflessness) by combining the same proof as in \cite{HKER25} and the very recent result of Shlyakhtenko \cite{Shl26} that reduced free products of at least two semicircular algebras are all isomorphic to each other.

Let us first recall Shlyakhtenko's construction; see
\cite[Remark~2.1 and Definition~2.3]{Shl97}.
Let $\mathcal H_{\mathbb R}$ be a real Hilbert space, and let
$(U_t)_{t\in\mathbb R}$ be a strongly continuous one-parameter
group of orthogonal transformations on $\mathcal H_{\mathbb R}$.
Set
$\mathcal H_{\mathbb C}:=\mathcal H_{\mathbb R}\otimes_{\mathbb R}\mathbb C$,
and also denote by $U_t$ its complex-linear extension to
$\mathcal H_{\mathbb C}$. Let $a$ be the positive injective,
possibly unbounded, self-adjoint operator on
$\mathcal H_{\mathbb C}$ satisfying $U_t=a^{it}$ and define
\[
    j_U\colon\mathcal H_{\mathbb R}\longrightarrow
    \mathcal H_{\mathbb C},
    \qquad
    j_U(\xi):=
    \left(\frac{2a}{1+a}\right)^{1/2}(\xi\otimes1).
\]
On the full Fock space
\[
    \mathcal F(\mathcal H_{\mathbb C})
    :=
    \mathbb C\Omega
    \oplus
    \bigoplus_{n=1}^{\infty}
    \mathcal H_{\mathbb C}^{\otimes n},
\]
let $l(\eta)$ denote the left creation operator associated
with $\eta\in\mathcal H_{\mathbb C}$, determined by $l(\eta)\Omega :=\eta$ and  $l(\eta)\zeta:=\eta\otimes\zeta$ for $\zeta\in\mathcal H_{\mathbb C}^{\otimes n}$. The associated \emph{free Araki--Woods \Cs-algebra} is
\[
    \Gamma(\mathcal H_{\mathbb R},U_t)
    :=
    \Cs\bigl(
        \{l(j_U(\xi))+l(j_U(\xi))^*
        \mid \xi\in\mathcal H_{\mathbb R}\}
    \bigr)
    \subseteq\mathcal B(\mathcal F(\mathcal H_{\mathbb C})).
\]
Its \emph{free quasi-free state} is the vacuum state $\varphi_U$ given by $x \mapsto \langle\Omega,x\Omega\rangle$. This state is faithful; see \cite[Section~3]{Shl97}.
The von Neumann algebra
$\Gamma(\mathcal H_{\mathbb R},U_t)''$ is called the
\emph{free Araki--Woods von Neumann algebra}.

For trivial $(U_t)_{t\in\mathbb R}$, selflessness in dimension at least two was established in \cite[Theorem~7.6]{HKER25}; for non-trivial $(U_t)_{t\in\mathbb R}$, the corresponding result in dimension at least three appears in \cite[Corollary~7.7]{HKER25}.
The following corollary establishes complete selflessness for
$\dim_{\mathbb R}(\mathcal{H}_{\mathbb R})\geq2$.

\begin{corollary}\label{cor:free-araki-woods-selfless}
Let $\mathcal H_{\mathbb R}$ be a real Hilbert space with
$\dim_{\mathbb R}(\mathcal H_{\mathbb R})\geq2$, and let
$(U_t)_{t\in\mathbb R}$ be a strongly continuous one-parameter
group of orthogonal transformations on $\mathcal H_{\mathbb R}$.
Then the free Araki--Woods \Cs-probability space
$(\Gamma(\mathcal H_{\mathbb R},U_t),\varphi_U)$ is completely
selfless.
\end{corollary}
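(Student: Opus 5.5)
The plan is to reduce to Corollary~\ref{thm:free-product} (or Theorem~\ref{thm:amalgamated-free-product}) through Shlyakhtenko's free product decompositions of free Araki--Woods algebras. Two standard facts about the construction will be used. First, an orthogonal decomposition $\mathcal H_{\mathbb R}=\mathcal H_1\oplus\mathcal H_2$ into $U_t$-invariant subspaces gives a state-preserving isomorphism
\[
(\Gamma(\mathcal H_{\mathbb R},U_t),\varphi_U)\cong(\Gamma(\mathcal H_1,U_t|_{\mathcal H_1}),\varphi_{U_1})\star(\Gamma(\mathcal H_2,U_t|_{\mathcal H_2}),\varphi_{U_2}),
\]
which already holds at the \Cs-level inside $\mathcal B(\mathcal F(\mathcal H_{\mathbb C}))$ by freeness of creation operators on orthogonal subspaces. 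Second, for a trivially acted one-dimensional summand $\mathbb R\xi$, the algebra $\Gamma(\mathbb R\xi,\id)$ is generated by a semicircular element $s$ and is $\cong(C([-2,2]),\text{semicircle})$. Its state is tracial, so the whole algebra is the centraliser; semicircular \emph{algebras} contain Haar unitaries (for instance $u=e^{2\pi i F(s)}$ with $F$ the semicircle distribution function, so that $F(s)$ is uniform on $[0,1]$).

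Case 1: $U_t$ has a non-zero fixed vector $\xi$. Decompose $\mathcal H_{\mathbb R}=\mathbb R\xi\oplus\xi^\perp$; both summands are invariant and $\xi^\perp\neq0$ since the dimension is at least two. Take $(B,\psi)=\Gamma(\mathbb R\xi)$ with Haar unitary $u\in B=B^\psi$, and $(A,\rho)=\Gamma(\xi^\perp,U_t|_{\xi^\perp})$. This $A$ is non-trivial, since it contains a non-scalar element $l(j_U\eta)+l(j_U\eta)^*$. Corollary~\ref{thm:free-product} with $C=A$ then gives complete selflessness of the inclusion $\Cs(A\cup\{u\})\subseteq(B,\psi)\star(A,\rho)$, and hence of the whole algebra as an intermediate algebra.

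Case 2: $U_t$ has no fixed vectors. The main obstacle is that there is then no obvious tracial free factor with a Haar unitary in the centraliser. If $\mathcal H_{\mathbb R}$ splits into two non-zero invariant pieces, I would use a Haar unitary in the centraliser of one of them. To produce it, I would try Shlyakhtenko's results that the centraliser of $\varphi_U$ contains semicircular/Haar elements in the von Neumann setting. At the \Cs-level this is delicate, so my preferred route is Theorem~\ref{thm:amalgamated-free-product}, which only requires a \emph{net} of unitaries $v_i$ in the first factor with $\|\mathbb E_1(v_i^*av_i)-\rho_1(a)\|\to0$ and $\|\mathbb E_1(v_i^*a)\|\to0$.

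In the two-dimensional non-trivial case, $\mathcal H_{\mathbb R}=\mathbb R^2$ with $U_t$ a rotation by $t\log\lambda$, and $\mathcal H_{\mathbb C}$ splits into eigenvectors for $\lambda^{\pm it}$. Here I would use Shlyakhtenko's identification of $\Gamma(\mathbb R^2,U_t)$ with a free product of a semicircular-type algebra and a corner/matrix-type algebra. Concretely, $\Gamma(\mathbb R^2,U_t)\cong(\Cs(s),\tau)\star(M_2\text{-type algebra},\omega_\lambda)$ via the generalised circular decomposition in \cite{Shl97}, which would put us back in Case 1 with $B=\Cs(s)$. If I must avoid this, I would instead build $v_n$ as products of unitaries functional-calculus'd from free creation-type elements in the centraliser, for example from $l(\eta)^*l(\eta)$-invariant combinations of the form $y^*y$ with $y=l(j_U\xi_1)+l(j_U\xi_2)^*$. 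Verifying the two expectation estimates would follow orthonormality of $v_n\Omega$-type vectors as in the proof of Corollary~\ref{thm:free-product}.

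For dimension at least three without fixed vectors, write $\mathcal H_{\mathbb R}$ as a non-trivial two-dimensional invariant summand plus a non-zero complement, or as a direct integral reduced similarly. Apply the two-dimensional result to obtain, inside one free factor, a Haar unitary in the centraliser (or the net $v_i$), and conclude with Corollary~\ref{thm:free-product} applied to the remaining summand as $C$.
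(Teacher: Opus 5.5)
Your Case~1 (a non-zero fixed vector) is correct. It is exactly the one-dimensional subcase of the paper's first case.

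\textbf{The gap: $(U_t)$ has no non-zero finite-dimensional invariant subspace.} This happens, e.g., when the spectral measure is continuous, as for translations on $L^2(\mathbb R,\mathbb R)$. There is then no two-dimensional invariant summand to reduce to, and ``a direct integral reduced similarly'' has no content. Moreover, the strategy of finding a Haar unitary in the centraliser of a free factor cannot work here. The modular group of $\varphi_U$ is implemented on $\mathcal F(\mathcal H_{\mathbb C})$ by the second quantisation of $(U_t)$. The tensor powers $U_t^{\otimes n}$, $n\geq1$, of a representation without eigenvectors have no non-zero invariant vectors. Hence even the centraliser of $\varphi_U$ in $\Gamma(\mathcal H_{\mathbb R},U_t)''$ is $\mathbb C$. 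A Haar unitary in the centraliser of a free factor's state would be a non-scalar element of this centraliser. So Corollary~\ref{thm:free-product} is unavailable for every free product decomposition, and the von Neumann-level fact you want to invoke is false in this case.

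\textbf{The missing idea: asymptotic freeness instead of centralising unitaries.} This is what the paper uses.
\begin{enumerate}
\item Since the spectral measures are non-atomic, split $\mathcal H_{\mathbb R}=\bigoplus_{n\in\mathbb N}\mathcal K_n$ into infinitely many non-zero closed invariant subspaces (as in the proof of \cite[Corollary~7.7]{HKER25}). This gives $(\Gamma(\mathcal H_{\mathbb R},U_t),\varphi_U)\cong(A,\rho)\star(A_0,\rho_0)$, where $(A,\rho)=\star_{n\geq1}(A_n,\rho_n)$ and $A_n=\Gamma(\mathcal K_n,U_t|_{\mathcal K_n})$.
\item Every single operator $l(j_U\zeta)+l(j_U\zeta)^*$ is semicircular for the vacuum state. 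So each $A_n$ contains a $\rho_n$-Haar unitary $v_n$, typically not in the centraliser.
\item Freeness gives $\rho(v_n^*av_n)=\rho(a)$ and $\rho(v_n^*a)=0$ for $a\in\Cs(A_1\cup\cdots\cup A_N)$ and $n>N$.
\item By density, these are exactly the hypotheses of Theorem~\ref{thm:amalgamated-free-product} with $D=\mathbb C$ and $C=A_0$.
\end{enumerate}
You named that theorem as your preferred route but never produced such a net.

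\textbf{A secondary issue: the two-dimensional non-trivial case.} Your decomposition $(\Cs(s),\tau)\star(M_2\text{-type algebra},\omega_\lambda)$ is stated without a precise \Cs-level source. What is needed is a state-preserving decomposition $(A,\rho)\star(B,\psi)$ with $A\not\cong\mathbb C$, $B$ abelian and $\psi$ diffuse. The paper extracts this from \cite[Theorem~4.8 and Corollary~4.9]{Shl97}, and \cite[Theorem~5.4]{Thi24} then supplies the Haar unitary. Your alternative construction via elements $y^*y$ is not substantiated.
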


\begin{proof}
Suppose first that $\mathcal H_{\mathbb R}$ contains a
non-zero finite-dimensional $U_t$-invariant subspace.
By \cite[Remark~2.12]{Shl97}, we may choose such a subspace
to be one-dimensional with trivial action or
two-dimensional with non-trivial action.
Using \cite[Theorem~2.11]{Shl97} and, in the latter case,
\cite[Theorem~4.8 and Corollary~4.9]{Shl97}, we obtain
a state-preserving decomposition
\[
    (\Gamma(\mathcal H_{\mathbb R},U_t),\varphi_U)
    \cong (A,\rho)\star(B,\psi),
\]
where $A\not\cong\mathbb C$, where $B$ is Abelian, and where the state $\psi$ is diffuse. By \cite[Theorem~5.4]{Thi24}, $B$ contains a Haar unitary,
which automatically belongs to $B^\psi$.
Corollary~\ref{thm:free-product} therefore yields complete selflessness of the \Cs-probability space
$(\Gamma(\mathcal H_{\mathbb R},U_t),\varphi_U)$.

Suppose now that $\mathcal H_{\mathbb R}$ has no non-zero
finite-dimensional $U_t$-invariant subspace.
By the same argument as in the proof of
\cite[Corollary~7.7]{HKER25}, one obtains a decomposition of the form $\mathcal H_{\mathbb R}=\bigoplus_{n \in \mathbb{N}}\mathcal K_n$ into
non-zero closed $U_t$-invariant subspaces.
For $n \in \mathbb{N}$ let $A_n:=\Gamma(\mathcal K_n,U_t|_{\mathcal K_n})$, $\rho_n :=\varphi_{U|_{\mathcal K_n}}$, and define $(A,\rho):=\mathop{\star}_{n\geq1}(A_n,\rho_n)$, so that by \cite[Theorem~2.11]{Shl97},
\[
    (\Gamma(\mathcal H_{\mathbb R},U_t),\varphi_U)
    \cong (A,\rho)\star(A_0,\rho_0).
\]

Equation (3) in  \cite[Section 2]{Shl97} implies that $(A_n,\rho_n)$ contains a semicircular \Cs-probability subspace, generated by the operator associated with any unit vector in $\mathcal K_n$, implying the existence of a Haar unitary $v_n\in A_n$. For $N \in \mathbb{N}$, $a\in\Cs(A_1\cup\cdots\cup A_N)$ and $n>N$, freeness gives $\rho(v_n^*av_n)=\rho(a)$ and $\rho(v_n^*a)=0$. It follows that
\[
    |\rho(v_n^*av_n)-\rho(a)|\longrightarrow0,
    \qquad
    |\rho(v_n^*a)|\longrightarrow0
\]
for every $a\in A$. Thereby
Theorem~\ref{thm:amalgamated-free-product} applies to $(A,\rho)\star(A_0,\rho_0)$ yielding complete selflessness.
\end{proof}

\vspace{3mm}


\section{Group \Cs-algebras and crossed products}\label{sec:groups}

Let $\Gamma$ be a discrete group, and let
$\lambda\colon\Gamma\to\mathcal B(\ell^2(\Gamma))$
denote its left regular representation.
The associated \emph{reduced group \Cs-algebra} is
\[
    C_{\mathrm r}^*(\Gamma)
    :=\Cs(\{\lambda_g\mid g\in\Gamma\})
    \subseteq\mathcal B(\ell^2(\Gamma));
\]
we denote its canonical tracial state by $\tau_\Gamma$
and call $\Gamma$ \emph{(completely) \Cs-selfless} if
$(C_{\mathrm r}^*(\Gamma),\tau_\Gamma)$ is a (completely) selfless
\Cs-probability space.

The following corollary of Theorem~\ref{AbstractCriterion-general} characterises (complete) \Cs-selflessness in terms of covariant representations of the canonical action of the free product group $\Gamma\star\mathbb Z$ on the Bowditch compactification of its Bass--Serre tree.

\begin{corollary}
Let $\Gamma$ be a non-trivial discrete group, and let $\alpha\colon\Gamma\star\mathbb{Z}\curvearrowright\Delta T$ be the canonical action of the free product $\Gamma\star\mathbb{Z}$ on the Bowditch compactification of its Bass--Serre tree. Denote by $h$ the canonical generator of $\mathbb{Z}$, and let $o\in V(T)$ be the vertex corresponding to the coset $\Gamma\in(\Gamma\star\mathbb{Z})/\Gamma$. Then the following statements are equivalent:

\begin{enumerate}[label=\textup{(\roman*)},leftmargin=*]
\item The group $\Gamma$ is \Cs-selfless.
\item There exist an ultrafilter $\mathcal{U}$, a faithful representation $(\pi,\mathcal{H},\xi)$ of $(C_{\mathrm{r}}^{\ast}(\Gamma),\tau_{\Gamma})$, and a covariant unital representation $(u,\rho,\mathcal{H}^{\mathcal{U}})$ of the $(\Gamma\star\mathbb{Z})$-\Cs-algebra $C(\Delta T)$ such that
\begin{eqnarray*}
\begin{aligned}u_{g}  =(\pi^{\mathcal{U}}\circ\Delta)(\lambda_{g}), \quad  u_{h} \in\pi^{\mathcal{U}}\bigl(C_{\mathrm{r}}^{\ast}(\Gamma)^{\mathcal{U}}\bigr), \quad \rho(f)\xi^{\mathcal{U}} =f(o)\xi^{\mathcal{U}} ,
\end{aligned}
\end{eqnarray*}
for all $g \in \Gamma$ and $f\in C(\Delta T)$.
\end{enumerate}

\vspace{1mm}

\noindent Similarly, the following statements are equivalent:
\begin{enumerate}[label=\textup{(\roman*)},leftmargin=*]
\item
The group $\Gamma$ is completely \Cs-selfless.
\item There exist an ultrafilter $\mathcal U$, a faithful
representation $(\pi,\mathcal H,\xi)$ of
$(C_{\mathrm r}^*(\Gamma),\tau_\Gamma)$, a unit vector
$\eta\in\ell^2$, and a covariant unital representation
$(u,\rho,(\mathcal H\otimes\ell^2)^{\mathcal U})$
of the $(\Gamma\star\mathbb Z)$-\Cs-algebra $C(\Delta T)$
such that for all $g\in\Gamma$, $f\in C(\Delta T)$, $d\in\mathcal B(\ell^2)$ the conditions 
\begin{align*}
u_g &= \bigl((\pi\otimes1_{\mathcal B(\ell^2)})^{\mathcal U}
  \circ \Delta\bigr) (\lambda_g),\\
    u_h &\in  \bigl(\pi\otimes1_{\mathcal{B}(\ell^2)}\bigr) ^{\mathcal U}
       \bigl(C_{\mathrm r}^*(\Gamma)^{\mathcal U}\bigr), \\
       \rho(f)(\xi\otimes\eta)^{\mathcal U} &= f(o)(\xi\otimes\eta)^{\mathcal U}.
\end{align*}
and $[\rho(f),(\mathrm{id}_{\mathcal H}\otimes d)^{\mathcal U}]=0$ hold.
\end{enumerate}
\end{corollary}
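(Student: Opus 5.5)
The plan is to reduce both equivalences to Theorem~\ref{AbstractCriterion-general} and Theorem~\ref{AbstractCriterion-completely-selfless}, applied to the inclusion $C_{\mathrm r}^*(\Gamma)\subseteq(C_{\mathrm r}^*(\Gamma),\tau_\Gamma)$ with the auxiliary factor $(C,\gamma)=(C(\mathbb T),\leb)\cong(C_{\mathrm r}^*(\mathbb Z),\tau_{\mathbb Z})$. Two translations are needed. First, Hasegawa's identification $\Delta\mathbf T(A,\mathbb E)\cong C(\Delta T)\rtimes_{\mathrm r}(\Gamma\star\mathbb Z)$, with $A=C_{\mathrm r}^*(\Gamma\star\mathbb Z)=(C_{\mathrm r}^*(\Gamma),\tau_\Gamma)\star(C_{\mathrm r}^*(\mathbb Z),\tau_{\mathbb Z})$ and $D=\mathbb C$. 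Second, the reduction of the free factor $C$ to $C(\mathbb T)$: as in the proof of (i)$\Rightarrow$(iii) of Theorem~\ref{AbstractCriterion-completely-selfless}, selflessness (resp.\ complete selflessness) is witnessed by an embedding with $C=C(\mathbb T)$ by \cite{Rob25,HKEPR25} (resp.\ \cite[Lemma~10]{Ozawa25}).

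For (i)$\Rightarrow$(ii), I take the data of Theorem~\ref{AbstractCriterion-general}(ii) with $C=C_{\mathrm r}^*(\mathbb Z)$, namely $\theta$, $(\pi,\mathcal H,\xi)$ and $p$. Set $u_g:=(\pi^{\mathcal U}\circ\Delta)(\lambda_g)$ and $u_h:=(\pi^{\mathcal U}\circ\theta)(\lambda_1)$. The first two compression identities are exactly Hasegawa's relations, so the universal property (\cite[Corollary~4.2.7]{Hasegawa18}) gives a representation of $\Delta\mathbf T(A,\mathbb E)$ extending $u$ and sending the canonical projection to $p$. Through the crossed product identification this representation is a covariant pair $(u,\rho)$ of $C(\Delta T)$. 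Here I need to check that the canonical projection corresponds to the indicator of the clopen part of $\Delta T$ ``beyond the $\mathbb Z$-edges'' at $o$, and that $f\mapsto f(o)$ corresponds to the compression picked out by vectors killed by $p\,\lambda_g$ for all $g$. The condition $p\,(\pi^{\mathcal U}\circ\Delta)(a)\,\xi^{\mathcal U}=0$ says that $\xi^{\mathcal U}$ lies under $u_g\rho(1-\chi)u_g^*$ for all $g\in\Gamma$. These projections should cover precisely $\Delta T\setminus\{o\}$ (the branches at $o$ are indexed by $\Gamma$-translates of the half-trees through $h^{\pm}$-edges), so $\rho(f)\xi^{\mathcal U}=f(o)\xi^{\mathcal U}$.

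For (ii)$\Rightarrow$(i), I reverse this: define $p:=\rho(\chi)$ for the clopen indicator $\chi$ corresponding to Hasegawa's projection, and $\theta(\lambda_1):=(\pi^{\mathcal U})^{-1}(u_h)$. The covariance relations reproduce the two compression identities, since they hold in $C(\Delta T)\rtimes(\Gamma\star\mathbb Z)$ for the canonical projection. The eigenvector condition at $o$ gives $p\,u_g\,\xi^{\mathcal U}=u_g\rho(g^{-1}\cdot\chi)\xi^{\mathcal U}=\chi(g^{-1}o)\,u_g\xi^{\mathcal U}=0$, because $o\notin\operatorname{supp}\chi$ and $\Gamma$ fixes $o$. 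This extends by linearity and density to all of $C_{\mathrm r}^*(\Gamma)$. State preservation of $\theta$ on $C(\mathbb T)$ follows as in the proof of Theorem~\ref{AbstractCriterion-general}, using $p\xi^{\mathcal U}=0$. Injectivity of $\theta$ needs the resulting $\ast$-homomorphism to be non-degenerate; I would get it by applying Proposition~\ref{Direction2}, which also yields that $\theta$ is injective on all of $C_{\mathrm r}^*(\Gamma\star\mathbb Z)$.

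The complete case is identical, with Theorem~\ref{AbstractCriterion-completely-selfless}(ii) in place of Theorem~\ref{AbstractCriterion-general}(ii), everything amplified by $\ell^2$, and $[\rho(f),(\id_{\mathcal H}\otimes d)^{\mathcal U}]=0$ matching $[p,(\id_{\mathcal H}\otimes d)^{\mathcal U}]=0$. One direction is immediate since $p=\rho(\chi)$. For the other, $\rho(C(\Delta T))$ is generated by translates $u_xpu_x^*$, and both $u_x$ and $p$ commute with $\id\otimes d$. The main obstacle is the bookkeeping in Hasegawa's isomorphism: identifying precisely which clopen set corresponds to $p$, and showing that the $\Gamma$-translates of its complement together with the point $o$ account for all of $\Delta T$, so that the third compression identity is equivalent to $\xi^{\mathcal U}$ being an eigenvector for evaluation at $o$.
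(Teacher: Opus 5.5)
This is essentially the paper's proof: Hasegawa's identification of $\Delta\mathbf{T}$ with $C(\Delta T)\rtimes_{\mathrm r}(\Gamma\star\mathbb Z)$ and the universal property sending the half-tree projection $q$ to $p$ give (i)$\Rightarrow$(ii), using that the translates $\alpha_g(q)$, $g\in\Gamma$, partition $\Delta T\setminus\{o\}$, while the converse takes $p:=\rho(q)$ and uses covariance and $q(o)=0$. Three minor corrections are needed:
\begin{enumerate}
\item It is the $\Gamma$-translates of $q$ itself, not of its complement, that partition $\Delta T\setminus\{o\}$; here $q$ is the indicator of the single half-tree across the fundamental edge. You also need that every neighbourhood of $o$ contains all but finitely many of these translates.
\item Injectivity of $\theta$ on $C_{\mathrm r}^*(\mathbb Z)$ follows from state preservation and faithfulness of $\tau_{\mathbb Z}$. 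Proposition~\ref{Direction2} already assumes $\pi_2$ is an embedding, so it cannot supply this.
\item In the complete case, the proof of Theorem~\ref{AbstractCriterion-completely-selfless} produces the semicircular factor $(\mathcal C,\tau_{\mathcal C})$, not $C(\mathbb T)$. To define $u_h$ you must either restrict $\theta$ to a Haar unitary in $\mathcal C$, e.g.\ $t\mapsto e^{2\pi i F(t)}$ with $F$ the semicircle distribution function, or rerun the Hadwin argument with $C(\mathbb T)$.
\end{enumerate}
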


\begin{proof}
By \cite[Proposition~3.1.1]{Hasegawa18} and \cite[Corollary~4.2.7]{Hasegawa18} we have that $\Delta\mathbf{T}(A,\tau)\cong C(\Delta T)\rtimes_{\alpha,\mathrm{r}}(\Gamma\star\mathbb{Z})$, where $A:=C_{\text{r}}^{\ast}(\Gamma\star\mathbb{Z})\cong(C_{\text{r}}^{\ast}(\Gamma),\tau_{\Gamma})\star(C_{\text{r}}^{\ast}(\mathbb{Z}),\tau_{\mathbb{Z}})$ and where $\tau:=\tau_{\Gamma\star\mathbb{Z}}$ is the canonical tracial state on $A$. Hence $C(\Delta T)\rtimes_{\alpha,\mathrm{r}}(\Gamma\star\mathbb{Z})$ is universally generated by unital copies of $C_{\mathrm r}^{\ast}(\Gamma)$ and $C_{\mathrm r}^{\ast}(\mathbb Z)$, and a projection $q\in C(\Delta T)$, subject to the relations $qaq=q^{\perp}bq^{\perp}=0$ for all $a\in\ker(\tau)\cap C_{\text{r}}^{\ast}(\Gamma)$ and $b\in\ker(\tau)\cap C_{\text{r}}^{\ast}(\mathbb{Z})$. Here $q\in C(\Delta T)$ is the characteristic function on the clopen subset of \(\Delta T\) determined by the half-tree opposite $o$ across the fundamental edge. Its $\Gamma$-translates partition $\Delta T\setminus\{o\}$, so that
\begin{equation}
\{f\in C(\Delta T)\mid f(o)=0\}=\overline{\operatorname{span}}\{f\alpha_{g}(q)\mid f\in C(\Delta T),\ g\in\Gamma\}.\label{eq:SetEquality}
\end{equation}

For the implication ``$\text{(i)}\Rightarrow\text{(ii)}$'' of the first equivalence above, suppose that $\Gamma$ is \Cs-selfless. By \cite[Theorem~2.6(iv)]{Rob25}, selflessness admits a witness with auxiliary probability space $(C_{\mathrm r}^{\ast}(\mathbb Z),\tau_{\mathbb Z})$. Applying the forward construction in the proof of Theorem~\ref{AbstractCriterion-general} to this witness, we obtain an ultrafilter $\mathcal{U}$, a faithful representation $(\pi,\mathcal{H},\xi)$ of $(C_{\mathrm{r}}^{\ast}(\Gamma),\tau_{\Gamma})$, a state-preserving $\ast$-embedding $\theta\colon(C_{\text{r}}^{\ast}(\mathbb{Z}),\tau_{\mathbb{Z}})\hookrightarrow(C_{\text{r}}^{\ast}(\Gamma)^{\mathcal{U}},\tau_{\Gamma}^{\mathcal{U}})$, and a projection $p\in\mathcal{B}(\mathcal{H}^{\mathcal{U}})$ satisfying $p((\pi^{\mathcal{U}}\circ\Delta)(a))p=p^{\perp}((\pi^{\mathcal{U}}\circ\theta)(b))p^{\perp}=0$ for all $a\in\ker(\tau)\cap C_{\text{r}}^{\ast}(\Gamma)$, $b\in\ker(\tau)\cap C_{\text{r}}^{\ast}(\mathbb{Z})$ and $p((\pi^{\mathcal{U}}\circ\Delta)(a))\xi^{\mathcal{U}}=0$ for all $a\in C_{\text{r}}^{\ast}(\Gamma)$. The universal property of $\Delta\mathbf{T}(A,\tau)$ thereby yields a unital $\ast$-representation $\Psi\colon C(\Delta T)\rtimes_{\alpha,\mathrm{r}}(\Gamma\star\mathbb{Z})\to\mathcal{B}(\mathcal{H}^{\mathcal{U}})$ extending $\pi^{\mathcal{U}}\circ\Delta$ on $C_{\text{r}}^\ast (\Gamma)$ and $\pi^{\mathcal{U}}\circ\theta$ on $C_\text{r}^\ast (\mathbb Z)$, and satisfying $\Psi(q)=p$. Let $(u,\rho,\mathcal{H}^{\mathcal{U}})$ be the associated covariant representation. The required conditions on $u$ then hold by construction, and moreover
\[
\rho(\alpha_{g}(q))\xi^{\mathcal{U}}=u_{g}pu_{g}^{*}\xi^{\mathcal{U}}=u_{g}p((\pi^{\mathcal{U}}\circ\Delta)(\lambda_{g^{-1}}))\xi^{\mathcal{U}} =0
\]
for $g \in \Gamma$ giving with \eqref{eq:SetEquality} that $\rho(f)\xi^{\mathcal{U}}=f(o)\xi^{\mathcal{U}}$ for every $f\in C(\Delta T)$.

For the opposite implication ``$\text{(ii)}\Rightarrow\text{(i)}$'' of the first statement above, suppose that (ii) holds and set $p:=\rho(q)$. The half-tree relations $q\alpha_{g}(q)=q^{\perp}\alpha_{h^{n}}(q^{\perp})=0$ for $g\in\Gamma\setminus\{e\}$, $n\in\mathbb{Z}\setminus\{0\}$ imply in combination with the covariance of $(u,\rho,\mathcal{H}^{\mathcal{U}})$ that $pu_{g}p=p^{\perp}u_{h}^{n}p^{\perp}=0$. Moreover, since $\Gamma$ fixes $o$ and $q(o)=0$,
\[
pu_{g}\xi^{\mathcal{U}}=u_{g}\rho(\alpha_{g^{-1}}(q))\xi^{\mathcal{U}}=(\alpha_{g^{-1}}(q)(o))u_g\xi^{\mathcal{U}}=0\qquad\text{ for }g\in\Gamma.
\]
Let $v\in C_\text{r}^\ast(\Gamma)^{\mathcal{U}}$ be the unitary satisfying $\pi^{\mathcal{U}}(v)=u_{h}$. Then the $\ast$-homomorphism $\theta\colon(C_{\text{r}}^{\ast}(\mathbb{Z}),\tau_{\mathbb{Z}})\rightarrow(C_\text{r}^\ast(\Gamma)^{\mathcal{U}},\tau_{\Gamma}^{\mathcal{U}})$ given by $\theta(\lambda_{h}):=v$ satisfies by $p\xi^{\mathcal{U}}=0$ the equality
\[
(\tau_{\Gamma}^{\mathcal{U}}\circ\theta)(\lambda_{h}^{n})=\tau_{\Gamma}^{\mathcal{U}}(v^{n})=\langle \xi^{\mathcal{U}},u_{h}^{n} \,\xi^{\mathcal{U}}\rangle=\langle\xi^{\mathcal{U}},(p^{\perp}u_{h}^{n}p^{\perp})\,\xi^{\mathcal{U}}\rangle=0
\]
for $n\in\mathbb{Z}\setminus\{0\}$. Hence $\theta$ is trace-preserving and its injectivity follows from the faithfulness of $\tau_{\mathbb{Z}}$. By linearity and continuity, the preceding identities imply with Theorem~\ref{AbstractCriterion-general} the \Cs-selflessness of $\Gamma$.

The second equivalence above follows from adapting the previous argument using Theorem~\ref{AbstractCriterion-completely-selfless}. We leave the details to the reader.
\end{proof}

We now turn to applications of Theorem \ref{thm:amalgamated-free-product} in the context of group \Cs-algebras.

\begin{definition}
    Let $\Gamma$ be a discrete group. A subgroup $\Lambda\le \Gamma$ is said to be \emph{unconfined} in $\Gamma$ if for every finite subset $F\subseteq \Gamma\setminus\{e\}$, there exists an element $g\in\Gamma$ such that $g\Lambda g^{-1}\cap F=\emptyset$.
\end{definition}

\begin{corollary}\label{cor:unconfined}
Let $\Gamma_1,\Gamma_2$ be discrete groups with $\Gamma_1$ infinite, and let $\Lambda\leq\Gamma_1,\Gamma_2$ be a non-trivial common subgroup that is unconfined in $\Gamma_1$. Furthermore, let $\Lambda_2\leq\Gamma_2$ be a non-trivial subgroup satisfying $\Lambda\cap\Lambda_2=\{e\}$. Then there exists a net $(g_i)_{i \in I}\subseteq\Gamma_1$ such that the inclusion
\begin{equation} \label{C*SelflessInclusion}
    \Cs\bigl(
        \{\lambda_{g_i}\mid i \in I\}
        \cup C_{\mathrm r}^{\ast}(\Lambda_2)
    \bigr)
    \subseteq
    \bigl(
        C_{\mathrm r}^{\ast}(\Gamma_1\star_\Lambda\Gamma_2),
        \tau_{\Gamma_1\star_\Lambda\Gamma_2}
    \bigr)
\end{equation}
is completely selfless. In particular, $\Gamma_1\star_\Lambda\Gamma_2$ is completely
\Cs-selfless.
\end{corollary}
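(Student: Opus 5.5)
The plan is to apply Theorem~\ref{thm:amalgamated-free-product} with $A_1:=C_{\mathrm r}^*(\Gamma_1)$, $A_2:=C_{\mathrm r}^*(\Gamma_2)$, $D:=C_{\mathrm r}^*(\Lambda)$, $\delta:=\tau_\Lambda$, and the canonical conditional expectations $\mathbb E_1,\mathbb E_2$ (restriction of Fourier coefficients to $\Lambda$). Then $(A_1,\mathbb E_1)\star_D(A_2,\mathbb E_2)\cong C_{\mathrm r}^*(\Gamma_1\star_\Lambda\Gamma_2)$, and $\delta\circ(\mathbb E_1\star\mathbb E_2)$ is the canonical trace. For the auxiliary subalgebra take $C:=C_{\mathrm r}^*(\Lambda_2)\subseteq A_2$. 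Since $\Lambda\cap\Lambda_2=\{e\}$, we have $\mathbb E_2(\lambda_g)=0=\tau(\lambda_g)$ for $g\in\Lambda_2\setminus\{e\}$, so $\mathbb E_2(c)=\rho_2(c)1_D$ on $C$. Moreover $C$ is non-trivial because $\Lambda_2\neq\{e\}$. For the inclusion statement, GNS-faithfulness of the trace restricted to $\Cs(C\cup\{\lambda_{g_i}\})$ is automatic, since the canonical trace is faithful on $C_{\mathrm r}^*(\Gamma_1\star_\Lambda\Gamma_2)$ and hence on any subalgebra. The final claim, complete \Cs-selflessness of $\Gamma_1\star_\Lambda\Gamma_2$, then follows from the first conclusion of that theorem, or because intermediate algebras of completely selfless inclusions are completely selfless.

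It remains to build the net of unitaries $v_i=\lambda_{g_i}$ with $\|\mathbb E_1(\lambda_{g_i}^*a\lambda_{g_i})-\tau(a)1\|\to0$ and $\|\mathbb E_1(\lambda_{g_i}^*a)\|\to0$ for all $a\in A_1$. By linearity, density and the uniform bound $\|\mathbb E_1\|\le1$, it suffices to check these for $a=\lambda_h$, $h\in\Gamma_1$. We index the net by the directed set $I$ of finite subsets $F\subseteq\Gamma_1\setminus\{e\}$.
\begin{itemize}
\item For the first condition, $\mathbb E_1(\lambda_{g^{-1}hg})$ vanishes exactly when $g^{-1}hg\notin\Lambda$. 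Since $\tau(\lambda_h)=0$ for $h\neq e$, we need $g_F^{-1}hg_F\notin\Lambda$ for $h\in F$, i.e.\ $g_F\Lambda g_F^{-1}\cap F=\emptyset$. Unconfinedness gives such $g$; note it must be applied to the conjugate $g\Lambda g^{-1}$, which is $g_F\Lambda g_F^{-1}$ with $g_F:=g$.
\item For the second condition, we need $g_F^{-1}h\notin\Lambda$, i.e.\ $g_F\notin h\Lambda$ for the finitely many relevant $h$.
\end{itemize}

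The main obstacle is meeting both requirements simultaneously. Given $g$ with $g\Lambda g^{-1}\cap F=\emptyset$, every element of the coset $g\Lambda$ has the same property, since $g\lambda\Lambda\lambda^{-1}g^{-1}=g\Lambda g^{-1}$. One might therefore try to choose an element of $g\Lambda$ avoiding the finitely many cosets $h\Lambda$, $h\in F\cup\{e\}$, but this fails: if $g\Lambda$ coincides with one of these cosets, it is entirely contained in it. The plan instead is to use that $\Lambda$ has infinite index. This holds because $\Lambda$ is unconfined and non-trivial: if $\Lambda$ had finite index, it would contain a finite-index normal subgroup $N$ of the infinite group $\Gamma_1$, necessarily non-trivial, and taking $F=\{n\}$ with $n\in N\setminus\{e\}$ would give $n\in g\Lambda g^{-1}$ for every $g$, a contradiction. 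Then apply unconfinedness to the enlarged finite set $F':=F\cup\bigcup_{h\in F\cup\{e\}}h^{-1}F h$ and adjust $g$ by right multiplication within the normaliser-type freedom. If this adjustment is awkward, a cleaner alternative is to take $g_F:=g_1g_2$, with $g_1$ obtained from unconfinedness for a suitable enlarged finite set and $g_2$ chosen so that $g_F$ lies outside the finitely many cosets $h\Lambda$, using infinite index. Verifying that the conjugation condition survives this product is the delicate point I expect to require the most care.
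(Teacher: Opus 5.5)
Your setup is right. The choices $A_1=C_{\mathrm r}^*(\Gamma_1)$, $A_2=C_{\mathrm r}^*(\Gamma_2)$, $D=C_{\mathrm r}^*(\Lambda)$ and $C=C_{\mathrm r}^*(\Lambda_2)$, the check $\mathbb E_2(c)=\tau(c)1$ on $C$, the GNS-faithfulness argument, and the first expectation condition via $g_F\Lambda g_F^{-1}\cap F=\emptyset$ are exactly what Theorem~\ref{thm:amalgamated-free-product} needs.

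However, you never establish the second condition $\|\mathbb E_1(\lambda_{g_F}^*a)\|\to0$, which you yourself flag as the main obstacle, and none of your proposed fixes works as stated:
\begin{enumerate}
\item Avoiding the enlarged set $F'=F\cup\bigcup_h h^{-1}Fh$ does not exclude $g\in h\Lambda$. For instance, take $F=\{h\}$ with $h\notin\Lambda$. Then $F'=\{h\}$, and $g=h$ satisfies $g\Lambda g^{-1}\cap F'=\emptyset$, yet $g\in h\Lambda$.
\item The ``normaliser-type freedom'' can be trivial, for example when $\Lambda$ is self-normalising.
\item The product $g_1g_2$ generally destroys the conjugation condition. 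Indeed, $(g_1g_2)\Lambda(g_1g_2)^{-1}\cap F=\emptyset$ is equivalent to $g_2\Lambda g_2^{-1}\cap g_1^{-1}Fg_1=\emptyset$, and the choice of $g_1$ does not control this.
\end{enumerate}
So the central step of the proof is missing.

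The missing observation is that no modification is needed: unconfinedness alone gives both conditions with the same $g_F$. Fix $k\in\Lambda\setminus\{e\}$. If $g^{-1}h\in\Lambda$, then $g\Lambda=h\Lambda$, so $g\Lambda g^{-1}=h\Lambda h^{-1}\ni hkh^{-1}$, and $hkh^{-1}\neq e$. Your net is indexed by all finite subsets of $\Gamma_1\setminus\{e\}$, so for each fixed $h\in\Gamma_1$ we eventually have $hkh^{-1}\in F$. Then $g_F\Lambda g_F^{-1}\cap F=\emptyset$ forces $g_F^{-1}h\notin\Lambda$, that is, $\mathbb E_1(\lambda_{g_F}^*\lambda_h)=0$ eventually. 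Equivalently, apply unconfinedness to $F\cup\{hkh^{-1}\mid h\in F\cup\{e\}\}$; the point is to conjugate a non-trivial element of $\Lambda$, not $F$. This is precisely the paper's argument, and it makes your infinite-index discussion unnecessary.
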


\begin{proof}
Let $I$ be the set of finite subsets of $\Gamma_1\setminus\{e\}$, directed by inclusion. Since $\Lambda$ is unconfined in $\Gamma_1$, for each $i \in I$ we may choose $g_i \in\Gamma_1$ with $i \cap g_i\Lambda g_i^{-1}=\emptyset$. Consequently, $\mathbb E_1(\lambda_{g_i}^{\ast}\lambda_h\lambda_{g_i})=0$ for all $h \in i$, where $\mathbb{E}_1$ is the canonical conditional expectation $\Cs_\text{r}(\Gamma_1) \to \Cs_\text{r}(\Lambda)$. It follows that $\|\mathbb E_1(\lambda_{g_i}^{\ast}a\lambda_{g_i})-\tau_{\Gamma_1}(a) 1_{\Cs_\text{r}(\Lambda)} \| \rightarrow 0$ for every $a\in C_{\mathrm r}^{\ast}(\Gamma_1)$.

Fix $k\in\Lambda\setminus\{e\}$.
For any $h\in\Gamma_1$, if $g_i^{-1}h\in\Lambda$ we have that $hkh^{-1}\in g_i\Lambda g_i^{-1}$, leading to a contradiction whenever $i$ contains $hkh^{-1}$.
Hence, $\mathbb E_1(\lambda_{g_i}^{\ast}\lambda_h)=0$
eventually for every $h\in\Gamma_1$, so that $\|\mathbb E_1(\lambda_{g_i}^{\ast}a)\| \rightarrow0$ for every $a\in C_{\mathrm r}^{\ast}(\Gamma_1)$.

Finally, let
$\mathbb E_2\colon C_{\mathrm r}^{\ast}(\Gamma_2)
\to C_{\mathrm r}^{\ast}(\Lambda)$
be the canonical conditional expectation. Since $\Lambda\cap\Lambda_2=\{e\}$, we have $\mathbb E_2(c)=\tau_{\Gamma_2}(c) 1_{\Cs_\text{r}(\Lambda)}$ for every $c\in C_{\mathrm r}^{\ast}(\Lambda_2)$.
We may therefore apply Theorem \ref{thm:amalgamated-free-product} to obtain that the inclusion in \eqref{C*SelflessInclusion} is completely selfless. 
\end{proof}
\begin{remark}
Let $\Gamma_1$ be a \Cs-simple discrete group,
let $\Gamma_2$ be a discrete group, and let
$\Lambda\leq\Gamma_1,\Gamma_2$ be a non-trivial common
amenable subgroup. Kennedy's criterion in \cite[Theorem~1.1]{Ken20} states
that a discrete group is \Cs-simple if and only if every amenable subgroup is unconfined. This implies in combination with Corollary~\ref{cor:unconfined} that, if $\Gamma_2$ contains a non-trivial subgroup
$\Lambda_2$ satisfying $\Lambda\cap\Lambda_2=\{e\}$, the amalgamated free product $\Gamma_1\star_\Lambda\Gamma_2$ is completely \Cs-selfless.
\end{remark}

\begin{corollary}
Let $(A,\rho)$, $(B,\psi)$ be \Cs-probability spaces
with $B\not\cong\mathbb C$.
Write $\mathbb F_2=\langle a,b\rangle$, let
$H:=\langle aba^{-1}b^{-1}\rangle$, and denote the canonical conditional expectation $\Cs_\text{r}(\mathbb F_2)\to\Cs_\text{r}(H)$ by $\mathbb E_H$. Then the reduced amalgamated free product
\begin{equation} \label{AmalgamatedExpression}
    (A\otimes\Cs_\text{r}(\mathbb F_2),\rho\otimes\mathbb E_H)
    \star_{\Cs_\text{r}(H)}
    (\Cs_\text{r}(H)\otimes B,\id\otimes\psi)
\end{equation}
is completely selfless with respect to the composition of $\tau_H$ with the canonical conditional expectation onto $\Cs_\text{r}(H)$.
\end{corollary}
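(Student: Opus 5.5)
The plan is to apply Theorem~\ref{thm:amalgamated-free-product} with $D:=\Cs_\text{r}(H)$, $A_1:=A\otimes\Cs_\text{r}(\mathbb F_2)$, $\mathbb E_1:=\rho\otimes\mathbb E_H$, $A_2:=\Cs_\text{r}(H)\otimes B$, $\mathbb E_2:=\id\otimes\psi$, and $\delta:=\tau_H$. Here $\Cs_\text{r}(H)$ embeds in $A_1$ as $1\otimes\Cs_\text{r}(H)$, and both expectations are $D$-valued conditional expectations. The states are $\rho_1=\tau_H\circ\mathbb E_1=\rho\otimes\tau_{\mathbb F_2}$ and $\rho_2=\tau_H\otimes\psi$. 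These are GNS-faithful, being tensor products of GNS-faithful states, so the expectations are non-degenerate.

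For the auxiliary subalgebra, take $C:=1\otimes B\subseteq A_2$. Then $\mathbb E_2(1\otimes b)=\psi(b)1_D=\rho_2(1\otimes b)1_D$, and $C$ is non-trivial since $B\not\cong\mathbb C$.

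The remaining task is the net of unitaries. I will take $v_n:=1\otimes\lambda_{g_n}$ for a suitable sequence $g_n\in\mathbb F_2$. One natural choice is $g_n=a^n$; another is $g_n=b^n$.

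On elementary tensors $x\otimes\lambda_s$ we have
\[\mathbb E_1(v_n^*(x\otimes\lambda_s)v_n)=\rho(x)\,\mathbb E_H(\lambda_{g_n^{-1}sg_n}),\]
which vanishes unless $g_n^{-1}sg_n\in H$. Similarly,
\[\mathbb E_1(v_n^*(x\otimes\lambda_s))=\rho(x)\,\mathbb E_H(\lambda_{g_n^{-1}s}),\]
which vanishes unless $g_n^{-1}s\in H$.

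So it suffices to check two group-theoretic facts. First, for each fixed $s\neq e$, the element $a^{-n}sa^{n}$ eventually lies outside $H=\langle[a,b]\rangle$. Second, for each fixed $s$, the element $a^{-n}s$ eventually lies outside $H$.

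The second fact holds because the cosets $a^nH$ are pairwise distinct: $a^m\in H$ forces $m=0$, since $H$ is cyclic generated by a cyclically reduced word of length $4$. The first fact is the main obstacle. Conjugates of $s$ landing in $H$ means $a^{-n}sa^n=c^k$ with $c=aba^{-1}b^{-1}$ and $k\neq0$, where $c^k$ is cyclically reduced. I would argue by reduced-word length. For large $n$, the word $a^{-n}sa^n$ is either a power of $a$, when $s$ is one, or begins with a long $a^{-1}$-block. Neither form can be a power of $c$: the first because $a^m\notin H$, the second because powers of $c$ have no long $a$-blocks. I would need to handle carefully the cancellations when $s$ begins or ends with a power of $a$.

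Linearity and density then upgrade both limits to all $a\in A_1$, since the maps involved are contractive. Theorem~\ref{thm:amalgamated-free-product} then gives complete selflessness of the free product with respect to $\tau_H\circ(\mathbb E_1\star\mathbb E_2)$, which is the stated state.
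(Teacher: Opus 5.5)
Your proposal is correct and follows essentially the same route as the paper. You apply Theorem~\ref{thm:amalgamated-free-product} with $D=\Cs_\text{r}(H)$, $C=1\otimes B$ and unitaries $1\otimes\lambda_{a^{\pm n}}$, and reduce everything to two facts about $H$ inside $\mathbb F_2$. The differences are cosmetic: the paper avoids your case analysis of $a^{-n}sa^n$ by computing $|a^{-n}c^ka^n|=2n+4|k|-2$ for $k\neq0$ directly, and it handles the second condition with the homomorphism $\chi\colon\mathbb F_2\to\mathbb Z$, $\chi(a)=1$, $\chi(b)=0$, which vanishes on $H$, in place of your coset argument.
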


\begin{proof}
Set $c:=aba^{-1}b^{-1}$ and let $|\cdot | \colon \mathbb{F}_2 \rightarrow \mathbb{N}$ be the word length function on $\mathbb{F}_2$ with respect to the standard generators $a$ and $b$. For all $n\in \mathbb{N}_{\geq 1}$ and $k\neq 0$,
\[
|a^{-n}c^ka^n|=2n+4|k|-2\ge 2n+2.
\]
In particular, for any non-trivial element $g\in \mathbb F_2$ and $n \in \mathbb{N}_{\geq 1}$  with $2n+2>|g|$, we have that $a^{n} g a^{-n}\not\in H$. It follows that $ \|\mathbb E_H(\lambda_{a^n}x\lambda_{a^{-n}})-\tau_{\mathbb F_2}(x)1_{\Cs_\text{r}(H)}\| \rightarrow 0$ for all $x\in\Cs_\text{r}(\mathbb F_2)$. By a density argument, this  implies that for all $x\in A\otimes\Cs_\text{r}(\mathbb F_2)$,
    \[
    \lim_{n\to\infty}\| (\rho\otimes \mathbb E_H)((1_A\otimes\lambda_{a^n})x(1_A\otimes\lambda_{a^{-n}}))-(\rho\otimes\tau_{\mathbb F_2})(x)1_{\Cs_\text{r}(H)}\|=0.
    \]

Now consider the unique group homomorphism $\chi\colon\mathbb F_2\rightarrow\mathbb Z$ with $\chi(a)=1$ and $\chi(b)=0$. Since $H\subseteq\ker(\chi)$, if for some element $g\in\mathbb F_2$ one had $a^ng\in H$, then $n+\chi(g)=0$. For large $n$, this leads to a contradiction, so that by a density argument, $\|\mathbb E_H(\lambda_{a^n}x)\| \rightarrow 0$
for all $x\in\Cs_\text{r}(\mathbb F_2)$. Again, tensoring with $1_A$ and approximating by sums in the algebraic tensor product, we obtain 
    \[
    \lim_{n\to\infty}\| (\rho\otimes \mathbb E_H)((1_A\otimes\lambda_{a^n})x)\|=0
    \]
    for all $x\in A\otimes \Cs_\text{r}(\mathbb F_2)$.

All hypotheses of
Theorem~\ref{thm:amalgamated-free-product}
are therefore satisfied, yielding the assertion.
\end{proof}

An action $\alpha\colon\Gamma\curvearrowright X$ of a countable discrete group $\Gamma$ on a compact Hausdorff space $X$ is called \emph{minimal} if every orbit $\{\alpha_g(x)\mid g\in\Gamma\}$ is dense in $X$, and \emph{extremely proximal} if, for every pair of non-empty open subsets $U,V\subseteq X$, there exists $g\in\Gamma$ such that $\alpha_g(X\setminus U)\subseteq V$. We call $X$ an \emph{extreme boundary} if the action is both minimal and extremely proximal. Finally, the action is \emph{topologically free} if the points with trivial stabilizer form a dense subset of $X$.

Following \cite{Ozawa25}, we call a sequence $(g_n)_{n\in\mathbb N}\subseteq\Gamma$ \emph{axial} if there exist a topologically free extreme boundary action $\alpha\colon\Gamma\curvearrowright X$ and distinct points $z_+,z_-\in X$ such that, for every choice of neighbourhoods $U_+$ of $z_+$ and $U_-$ of $z_-$, $\alpha_{g_n}(X\setminus U_-)\subseteq U_+$ for all sufficiently large $n$, and
\[
\{\alpha_g(z_+),\alpha_g(z_-)\}\cap\{z_+,z_-\}=\emptyset \qquad\text{for every }g\in\Gamma\setminus\{e\}.
\]
As observed in \cite{Ozawa25}, a second-countable extreme boundary $X$ with $|X|>2$ admits an axial sequence if and only if the action is topologically free.

The following theorem establishes a generalization of \cite[Theorem 1]{Ozawa25} and provides a conceptually different proof using Theorem \ref{MainTheorem}. 

\begin{theorem} Let $\alpha\colon\Gamma\curvearrowright A$ be an action of a countable discrete group $\Gamma$ on a unital \Cs-algebra $A$, and let $\mathbb{E}\colon A\rtimes_{\alpha,\text{r}}\Gamma\rightarrow A$ be the canonical conditional expectation from the corresponding reduced crossed product \Cs-algebra $A\rtimes_{\alpha,\text{r}}\Gamma$ onto $A$. Assume that $\Gamma$ admits an axial sequence $(g_{n})_{n\in\mathbb{N}}\subseteq\Gamma$ and a free ultrafilter $\mathcal{U}$ with $\lim_{i\rightarrow\mathcal{U}}\Vert\alpha_{g_{i}}(a)-a\Vert=0$ for every $a\in A$. Then there exists a unital $\ast$-embedding
\[
\theta\colon(A\rtimes_{\alpha,\text{r}}\Gamma,\mathbb{E})\star_{A}(A\otimes C(\mathbb{T}),\id \otimes \tau_{\text{\emph{Leb}}})\hookrightarrow(A\rtimes_{\alpha,\text{r}}\Gamma)^{\mathcal{U}}
\]
with $\theta|_{A\rtimes_{\alpha,\text{r}}\Gamma}=\Delta$ and $\mathbb{E}^{\mathcal{U}}\circ\theta=\Delta\circ(\mathbb{E}\star(\id \otimes \tau_{\text{\emph{Leb}}}))$. In particular, the triple $(A\rtimes_{\alpha,\text{r}}\Gamma,\mathbb{E},A)$ is relatively selfless in the sense of \cite[Definition 7.1]{GJKPR26}. \end{theorem}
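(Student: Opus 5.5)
We will verify condition (ii) of Theorem~\ref{MainTheorem} with $A_1:=A\rtimes_{\alpha,\mathrm r}\Gamma$, $D:=A$, $A_2:=A\otimes C(\mathbb T)$, $\mathbb E_1:=\mathbb E$, $\mathbb E_2:=\id\otimes\leb$ and $B:=A_1$. The natural representation is the regular one: let $A\subseteq\mathcal B(\mathcal K)$ be faithful, $\mathcal H:=\mathcal K\otimes\ell^2(\Gamma)$, let $\pi_1$ be the regular covariant representation ($\pi_1(a)(\xi\otimes\delta_g)=\alpha_{g^{-1}}(a)\xi\otimes\delta_g$, $\pi_1(\lambda_s)=1\otimes\lambda_s$), and take $V\xi:=\xi\otimes\delta_e$. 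Then (b) holds with $\rho=\id$. For (a) and the second free factor, we send $a\otimes z\mapsto \Delta(a)\,(\lambda_{g_i})_{i\to\mathcal U}$. This is a $\ast$-homomorphism because $\lambda_{g_i}a\lambda_{g_i}^*=\alpha_{g_i}(a)\to a$ along $\mathcal U$, so $(\lambda_{g_i})_{i\to\mathcal U}$ commutes with $\Delta(A)$. It restricts to $\Delta$ on $A$, and injectivity will follow from the expectation computations in Proposition~\ref{Direction2}.

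The projection $p$ is built dynamically. I would use the boundary $X$ with points $z_\pm$ and pass to the enveloping crossed product $(A\otimes C(X))\rtimes\Gamma$, or more concretely use multiplication operators on $\ell^2(\Gamma)$ pulled back along an orbit map. Fix a point $x_0\in X$ with trivial stabiliser avoiding the orbits of $z_\pm$, and identify $\Gamma$ with its orbit. Choose neighbourhoods $U_\pm^{(i)}$ shrinking to $z_\pm$, with $g_i(X\setminus U_-^{(i)})\subseteq U_+^{(i)}$, and let $P_i$ be multiplication by the indicator of $\{g: g x_0\in U_+^{(i)}\cup U_-^{(i)}\}$, or of a suitably chosen ``attracting'' half. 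This is the Ping-Pong/Toeplitz picture used by Ozawa. The aim is to take $p:=(1\otimes P_i)_{i\to\mathcal U}$ and check (c):
\begin{itemize}
\item $p\,\Delta(a\lambda_s)\,p=\mathbb E(a\lambda_s)p$ asymptotically. For $s\neq e$ this needs $sU^{(i)}\cap U^{(i)}=\emptyset$ eventually, which is the condition $\{sz_\pm\}\cap\{z_\pm\}=\emptyset$. The $A$-coefficients commute with $P_i$, since these are diagonal multiplications.
\item $p^\perp \lambda_{g_i}^n p^\perp\to 0$ for $n\neq0$, by the north–south dynamics of $g_i$: $g_i^{\pm n}$ maps the complement into the neighbourhoods. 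Care is needed for all powers at once; I would take $p$ to correspond to $U_+\cup U_-$ so that $g_i^{n}$ maps $X\setminus(U_+\cup U_-)$ into $U_+$ for $n>0$ and into $U_-$ for $n<0$.
\item $p\,\Delta(a\lambda_s)\,V=0$, that is, $s x_0\notin U^{(i)}$ eventually, which holds since the neighbourhoods shrink and $x_0$ is off the orbits of $z_\pm$.
\end{itemize}

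Exactness in the ultralimit follows because every error term tends to zero along $\mathcal U$. Using cofinality of the nets of neighbourhoods (diagonalising together with the sequence $g_i$), we may arrange this along the given free ultrafilter. Theorem~\ref{MainTheorem} then yields $\theta$. Relative selflessness in the sense of \cite{GJKPR26} follows directly from the definition.

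The main obstacle is the second compression identity in (c). It needs simultaneous control of all powers $g_i^n$, together with the twisted identity $p^\perp(a\otimes z^n)p^\perp=0$ involving the coefficients $\Delta(a)$. Reconciling the dynamics-based choice of $P_i$ with the asymptotic commutation $\alpha_{g_i}\to\id$ is also delicate. If the orbit-indicator approach fails to give exact relations, I would instead realise $p$ through the canonical projection $q$ of Hasegawa's algebra for $\Gamma\star\mathbb Z$, replacing $\mathbb Z$ by $\langle g_i\rangle$ asymptotically.
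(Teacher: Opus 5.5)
Your proposal is correct. It matches the paper on the setup: Theorem~\ref{MainTheorem} with the regular covariant representation, $V\xi=\xi\otimes\delta_e$, and $\pi_2(a\otimes\mathrm{id}_{\mathbb T})=\Delta(a)(\lambda_{g_i})_{i\to\mathcal U}$. It differs only in how you build the projection $p$.

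The paper takes $p$ to be the projection onto $\mathcal K_+\oplus\mathcal K_-$. Here $\mathcal K_\pm$ is the joint eigenspace at the point $z_\pm$ of the constant action of $C(X)\subseteq\ell^\infty(\Gamma)$ on the Hilbert space ultrapower, with $C(X)$ pulled back along the orbit of a point $z$ off the orbits of $z_\pm$. No neighbourhoods are chosen and nothing is diagonalised:
\begin{itemize}
\item $p\lambda_s p=0$ for $s\neq e$, and $p\,\Delta(x)V^{\mathcal U}=0$, follow because eigenspaces at distinct points are orthogonal.
\item The second compression holds because $p^\perp$ is the range projection of the multiplication operators by functions vanishing near $\{z_+,z_-\}$. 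The north--south dynamics with fixed neighbourhoods shows that $u^n$ maps these ranges into $\mathcal K_+$.
\end{itemize}

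Your $p=(1\otimes P_i)_{i\to\mathcal U}$ is instead an honest ultraproduct of projections in $1\otimes\ell^\infty(\Gamma)$. This is closer to Ozawa's ping-pong picture and gives the relations coordinatewise at finite stages. The price is coordinating $U^{(i)}_\pm$ with $(g_i)$.

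The obstacles you flag do not actually arise, so no fallback is needed.
\begin{itemize}
\item Assume $U_+^{(i)}\cap U_-^{(i)}=\emptyset$ and $g_i(X\setminus U_-^{(i)})\subseteq U_+^{(i)}$, equivalently $g_i^{-1}(X\setminus U_+^{(i)})\subseteq U_-^{(i)}$. Then $P_i^\perp\lambda_{g_i^n}P_i^\perp=0$ holds exactly for every $n\neq 0$.
\item $P_i$ commutes with $\pi_1(A)$ on the nose, so the coefficients $\Delta(a)$ pass through the compression.
\item $\alpha_{g_i}\to\id$ is needed only to make $\pi_2$ a homomorphism.
\item Trivial stabiliser of $x_0$ is not needed.
\end{itemize}

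One point to tighten: $X$ need not be first countable, so neighbourhoods cannot in general shrink to $z_\pm$ along a sequence. Instead, require that $U^{(i)}_\pm$ eventually lie inside each of the countably many neighbourhoods that guarantee $sU^{(i)}\cap U^{(i)}=\emptyset$ for $s\neq e$ and $sx_0\notin U^{(i)}$ for $s\in\Gamma$. Since $\Gamma$ is countable, a diagonal choice along $\mathbb N$ achieves this on cofinite sets, so the given free ultrafilter $\mathcal U$ suffices.
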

\begin{proof}
Let $\beta\colon\Gamma\curvearrowright X$ be an action witnessing the axiality of the sequence $(g_{n})_{n\in\mathbb{N}}\subseteq\Gamma$, and denote the corresponding boundary points by $z_{\pm}\in X$. Assume that $A\subseteq\mathcal{B}(\mathcal{H})$ for some Hilbert space $\mathcal{H}$. Since $\Gamma$ is assumed to be countable, we may choose a point $z\in X\setminus\{\beta_{g}(z_{\pm})\mid g\in\Gamma\}$. The minimality of $\beta$ implies that $C(X)$ can be viewed as a C$^{\ast}$-subalgebra of $\ell^{\infty}(\Gamma)\subseteq\mathcal{B}(\mathcal{H}\otimes\ell^{2}(\Gamma))$ via $f(g):=f(\beta_{g}(z))$ for $g\in\Gamma$.

Note that by our assumptions the element $u:=(\lambda_{g_{i}})_{i\to\mathcal{U}}\in(A\rtimes_{\alpha,\mathrm{r}}\Gamma)^{\mathcal{U}}$ commutes with $\Delta(A)$. Write $\pi_{1}$ for the canonical $\ast$-embedding $A\rtimes_{\alpha,\mathrm{r}}\Gamma\hookrightarrow\mathcal{B}(\mathcal{H}\otimes\ell^{2}(\Gamma))$ and let $\pi_{2}\colon A\otimes C(\mathbb{T})\hookrightarrow(A\rtimes_{\alpha,\mathrm{r}}\Gamma)^{\mathcal{U}}$ be the $\ast$-embedding induced by $\pi_{2}(a\otimes\operatorname{id}_{\mathbb{T}}):=\Delta(a)u$ for $a\in A$. Then $\pi_{2}$ restricts to $\Delta|_{A}$ on $A$ and $V^{*}\pi_{1}(x)V=\mathbb{E}(x)$ for every $x\in A\rtimes_{\alpha,\mathrm{r}}\Gamma$, where the isometry $V\in\mathcal{B}(\mathcal{H},\mathcal{H}\otimes\ell^{2}(\Gamma))$ is defined by $V\xi:=\xi\otimes\delta_{e}$ for $\xi\in\mathcal{H}$. Finally, let $p$ be the orthogonal projection on $(\mathcal{H}\otimes\ell^{2}(\Gamma))^{\mathcal{U}}$ onto the Hilbert subspace $\mathcal{K}:=\mathcal{K}_{+}\oplus\mathcal{K}_{-}$, where
\[
\mathcal{K}_{\pm}:=\left\{ \xi\in(\mathcal{H}\otimes\ell^{2}(\Gamma))^{\mathcal{U}}\mid f\xi=f(z_{\pm})\xi\text{ for all }f\in C(X)\right\} .
\]

We proceed by proving the following claims.\\

\emph{Claim 1}. For every $x\in A\rtimes_{\alpha,\mathrm{r}}\Gamma$ with $\mathbb{E}(x)=0$ the equality $p((\pi_{1}^{\mathcal{U}}\circ\Delta)(x))p=0$ holds.

\vspace{1mm}

\emph{Proof of the claim}. Since $\pi_{1}(A)$ commutes with $C(X)$, the projection $p$ commutes with $(\pi_{1}^{\mathcal{U}}\circ\Delta)(A)$. For $g\in\Gamma\setminus\{e\}$ the axiality of the sequence $(g_{n})_{n\in\mathbb{N}}$ gives $\{\beta_{g}(z_{+}),\beta_{g}(z_{-})\}\cap\{z_{+},z_{-}\}=\emptyset$, so that
\[
p(\pi_{1}^{\mathcal{U}}\circ\Delta)(a\lambda_{g})p=(\pi_{1}^{\mathcal{U}}\circ\Delta)(a)\,p(\pi_{1}^{\mathcal{U}}\circ\Delta)(\lambda_{g})p=0
\]
for $a\in A$. Since elements of the form $a\lambda_{g}$ with $a\in A$ and $g\in \Gamma\setminus\{e\}$ linearly span a norm dense subspace of $\ker(\mathbb{E})$, the claim hence follows by continuity.\\

\emph{Claim 2}. For every $x\in A\otimes C(\mathbb{T})$ with $(\id\otimes \tau_{\text{Leb}})(x)=0$ the equality $p^{\perp}((\pi_{1}^{\mathcal{U}}\circ\pi_{2})(x))p^{\perp}=0$ holds.

\vspace{1mm}

\emph{Proof of the claim}. Let $f\in C(X)$ be a function vanishing on a neighbourhood of $\{z_{+},z_{-}\}\subseteq X$. Then,
\[
\|(F-F(z_{+})1)\pi_{1}(\lambda_{g_{i}^{n}})f\|\leq\|f\|\sup_{y\in\operatorname{supp}(f)}|(F\circ\beta_{g_{i}^{n}})(y)-F(z_{+})|\rightarrow0
\]
for every $F\in C(X)$ and $n\in\mathbb{N}_{\geq 1}$; hence $(\pi_{1}^{\mathcal{U}}\circ\pi_{2})(1\otimes\operatorname{id}_{\mathbb{T}}^{n})f\xi\in\mathcal{K}_{+}$ for every $\xi\in(\mathcal{H}\otimes\ell^{2}(\Gamma))^{\mathcal{U}}$. The common kernel of the multiplication operators $f$ vanishing on a neighbourhood of $\{z_{+},z_{-}\}$ is precisely $\mathcal{K} \subseteq (\mathcal{H}\otimes\ell^{2}(\Gamma))^{\mathcal{U}}$, so that
\[
p^{\perp}(\pi_{1}^{\mathcal{U}}\circ\pi_{2})(1\otimes\operatorname{id}_{\mathbb{T}}^{n})p^{\perp}=0.
\]
 Taking adjoints implies the same identity for $n<0$. Since $p$ commutes with $(\pi_{1}^{\mathcal{U}}\circ\Delta)(A)$ the claim follows.\\

\emph{Claim 3}. For every $x\in A\rtimes_{\alpha,\mathrm{r}}\Gamma$ the equality $p\bigl((\pi_{1}^{\mathcal{U}}\circ\Delta\bigr)(x))V^{\mathcal{U}}=0$ holds.

\vspace{1mm}

\emph{Proof of the claim}. For every $a\in A$, $g\in\Gamma$, $f\in C(X)$, and $\xi\in\mathcal{H}^{\mathcal{U}}$ one has that
\[
f(\pi_{1}^{\mathcal{U}}\circ\Delta)(a\lambda_{g})V^{\mathcal{U}}\xi=f(\beta_{g}(z))\,(\pi_{1}^{\mathcal{U}}\circ\Delta)(a\lambda_{g})V^{\mathcal{U}}\xi.
\]
Since $z\in X\setminus\{\beta_{g}(z_{\pm})\mid g\in\Gamma\}$, it follows that the vector $(\pi_{1}^{\mathcal{U}}\circ\Delta)(a\lambda_{g})V^{\mathcal{U}}\xi$ is orthogonal to $\mathcal{K}$. Consequently, $p(\pi_{1}^{\mathcal{U}}\circ\Delta)(a\lambda_{g})V^{\mathcal{U}}=0$. The claim then follows by continuity and since the elements $a\lambda_{g}$ with $a\in A$, $g\in\Gamma$ linearly span a norm-dense subspace of $A\rtimes_{\alpha,\mathrm{r}}\Gamma$.\\

An application of Theorem~\ref{MainTheorem} now implies the desired assertion.
\end{proof}

\vspace{3mm}


\section{Hecke \Cs-algebras}\label{sec:Hecke}

In this section, we characterise complete selflessness for Hecke \Cs-algebras associated with free products of copies of $\mathbb Z/{2\mathbb{Z}}$, equipped with their canonical tracial states, and discuss the consequences for their classification by $K$-theoretic invariants.

We begin by establishing two permanence properties of completely selfless \Cs-probability spaces that will be used below. For selfless \Cs-probability spaces, the first statement of the following lemma is proved in \cite{Rob25} in full generality, whereas the second is proved there only in the tracial setting. To our knowledge, neither statement has previously been established for completely selfless \Cs-probability spaces. The proof of the second statement employs our characterisation of complete selflessness in Theorem~\ref{AbstractCriterion-completely-selfless}.

\begin{lemma}[{cf.\ \cite[Theorems 4.3 and 4.4]{Rob25}}]\label{lem:permanence}
For a completely selfless \Cs-probability space $(A,\rho)$ the following statements hold:
    \begin{enumerate}[label=\textup{(\roman*)},leftmargin=*]
        \item For every $m\in\mathbb N_{\geq 1}$ the \Cs-probability space $(M_m(A),\rho\otimes\mathrm{tr}_m)$ is completely selfless, where $\mathrm{tr}_m$ denotes the normalised trace on $M_m$.
        \item For every projection $p\in A$ with $\rho(p)>0$ the \Cs-probability space $\big(pAp,\rho(p)^{-1}\rho\big)$ is completely selfless.
    \end{enumerate}
\end{lemma}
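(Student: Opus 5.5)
For (i), I will use Theorem~\ref{AbstractCriterion-completely-selfless}(iii) and amplify its witness. Let $(\pi,\mathcal H,\xi)$, $\eta$, $T$ and $\mathcal U$ witness (iii) for $A\subseteq(A,\rho)$. For $M_m(A)$ take $\mathcal H_m:=\mathcal H\otimes\mathbb C^m\otimes\mathbb C^m$ with $\pi_m(x):=(\pi\otimes\id_{M_m})(x)\otimes 1_{\mathbb C^m}$, and cyclic vector $\xi_m:=m^{-1/2}\sum_{k}\xi\otimes e_k\otimes e_k$. This is a faithful representation implementing $\rho\otimes\mathrm{tr}_m$.

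The natural candidate isometry is $T_m:=T\otimes 1_{M_m}\otimes 1_{M_m}$, after reordering tensor factors so that $\ell^2$ comes last. However, $T^*\pi(a)T=\rho(a)$ for $a\in A$ only gives $T_m^*\pi_m(x)T_m=(\rho\otimes\id_{M_m})(x)$, not a scalar. To fix this, I will compose with a unitary implementing the mixing. One option is to replace $T$ by $T$ followed by a shift-type isometry in the auxiliary $\ell^2$ that ranges over matrix units. Another is to use that $(A,\rho)\star(\mathcal C,\tau_{\mathcal C})$ tensored with $M_m$ contains a copy of $(M_m(A),\rho\otimes\mathrm{tr}_m)\star(\mathcal C,\tau)$, and invoke the direct definition. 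The cleanest route is the second. The completely existential embedding $\Phi\colon A\star\mathcal C\to A^{\mathcal U}$ tensors with $\id_{M_m}$, because it is continuous for minimal tensor norms with $M_m$ and with every further $R$. So it suffices to know that the first-factor embedding $M_m(A)\hookrightarrow M_m(A\star\mathcal C)$ is completely existential in the sense of \cite{Rob25} and \cite[Lemma~10]{Ozawa25}; I will argue this via Robert's proof of \cite[Theorem~4.3]{Rob25}, observing that every ultrapower map there is a spatial tensor amplification.

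For (ii), I will work with criterion (ii) of Theorem~\ref{AbstractCriterion-completely-selfless}. Assume $A$ is completely selfless with witness $\theta\colon(\mathcal C,\tau_{\mathcal C})\to A^{\mathcal U}$ and projection $q$ as produced in the proof. I expect the image of $\mathcal C$ to be free from $A$ in $A^{\mathcal U}$. I then want a non-trivial element of $pA^{\mathcal U}p$ free from $pAp$ with respect to the normalised state. Following Robert's tracial argument, I will use the free Haar unitary $u$ of $C(\mathbb T)$, available from the proof of Theorem~\ref{AbstractCriterion-completely-selfless}, and aim at the compressed element $p\,u\,p$. Freeness of $u$ from $A$ means a free compression of a Haar unitary, which by known free-probability computations generates, together with $pAp$, a free product of $pAp$ with a non-trivial diffuse algebra. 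In the non-tracial case I instead plan a direct projection: take $q'\in\mathcal B((p\mathcal H\otimes\ell^2)^{\mathcal U})$ as the compression of $q$, with $\rho$ replaced by $\rho(p)^{-1}\rho$, and check the three relations of (ii) with $\xi_p:=\rho(p)^{-1/2}p\xi$. The main obstacle is exactly here: $q$ does not commute with $\pi(p)$, so I will first improve the witness so that $q\le \pi(p)$-compatible. To do this I will conjugate $q$ by $u_0=p\,\Delta(\cdot)$-supported partial isometries built from the Toeplitz isometry $T$ and the projection $p$, and verify the relation $q'(pap)q'=\rho(p)^{-1}\rho(pap)q'$ using $T^*aT=\rho(a)$.
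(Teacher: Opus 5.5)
\textbf{Part (i).} Your route is essentially the paper's. You amplify the completely existential witness by $M_m$ and then factor it through an embedding $(M_m(A),\rho\otimes\mathrm{tr}_m)\star(C(\mathbb T),\leb)\hookrightarrow M_m(A\star\cdots)$ that fixes $M_m(A)$, taken from Robert's proof of \cite[Theorem~4.3]{Rob25}. Three caveats:
\begin{itemize}
\item What you actually have to supply is this factoring embedding. Complete existentiality of $M_m(A)\hookrightarrow M_m(A\star\mathcal C)$ is already given by $\Phi\otimes\mathrm{id}_{M_m}$, so that is not the missing piece.
\item Robert's construction needs a large free factor; the paper uses $C_{\mathrm r}^*(\mathbb F_\infty)$. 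You must therefore enlarge the single semicircular $\mathcal C$ first.
\item The paper handles non-faithful $\rho$ separately, via \cite[Theorem~2]{Gould26} and \cite[Theorem~3]{Ozawa25}.
\end{itemize}

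\textbf{Part (ii): the genuine gap.} In the non-tracial case you never actually produce a witness.
\begin{itemize}
\item $q$ need not commute with $P:=(\pi(p)\otimes 1)_{i\to\mathcal U}$, so $PqP$ is not a projection.
\item ``Conjugating $q$ by partial isometries built from $T$ and $p$'' is not a construction.
\item Since you work with criterion (ii), you would also need a new non-trivial state-preserving embedding into $((pAp)^{\mathcal U},\rho_p^{\mathcal U})$, where $\rho_p:=\rho(p)^{-1}\rho$, satisfying the complementary compression identity. You do not address this.
\end{itemize}

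The missing idea is to compress the isometry rather than the projection, and to use criterion (iii). Take $T$ and $\zeta=(\xi\otimes\eta)^{\mathcal U}$ as in (iii), and put $S:=\rho(p)^{-1/2}PTP$ on $P(\mathcal H\otimes\ell^2)^{\mathcal U}$.
\begin{itemize}
\item Since $T^*PT=\rho(p)1$, you get $S^*S=P$, so $S$ is an isometry there.
\item For $a\in pAp$ one has $S^*aS=\rho_p(a)P$ and $S^*a\zeta_p=0$, where $\zeta_p:=\rho(p)^{-1/2}P\zeta$.
\item $S+S^*=\rho(p)^{-1/2}P(T+T^*)P$ lies in the image of $pA^{\mathcal U}p=(pAp)^{\mathcal U}$.
\item $S$ commutes with the $\mathcal B(\ell^2)$-amplifications, because $P$ and $T$ do.
\end{itemize}
This treats tracial and non-tracial $\rho$ at once. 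GNS-faithfulness of $\rho_p$ comes from simplicity of $pAp$, which you also need and do not mention.

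Your tracial route is only asserted, and the justification you give is wrong. Since $p\in A$, the element $pup$ is not a free compression of $u$. Freeness of $pAp$ and $\Cs(pup)$ in $(p(A\star C(\mathbb T))p,\rho_p)$ does hold, but it needs an argument. One option is $A\star C(\mathbb T)\cong A\star_D(D\star C(\mathbb T))$ with $D=\mathbb Cp\oplus\mathbb Cp^\perp$. That identity requires $p$ to lie in the centraliser of $\rho$, which is exactly what fails in general in the non-tracial case.

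If you want to keep your tracial argument, you could patch the non-tracial case with the dichotomy recalled in the introduction instead of a construction. There $A$ is purely infinite simple, hence so is $pAp$, and \cite[Theorem~3]{Ozawa25} applies.
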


\begin{proof}
    \emph{About (i)}: If $\rho$ is not faithful, \cite[Theorem~2]{Gould26} implies that $A$ is purely infinite simple, so that $M_m(A)$ is purely infinite simple as well. By \cite[Theorem~3]{Ozawa25}, the \Cs-probability space $(M_m(A),\rho\otimes\tr_m)$ is hence completely selfless. We may therefore assume that $\rho$ is faithful. Consider a $\ast$-embedding $\theta\colon (A,\rho)\star(\Cs_r(\mathbb F_\infty),\tau_{\mathbb F_\infty}) \hookrightarrow  (A^{\mathcal U},\rho^{\mathcal U})$ witnessing the complete selflessness of the \Cs-probability space $(A,\rho)$. The matrix amplification $\theta^{(m)}$ of $\theta$
witnesses that the inclusion
\[
    M_m(A)\subseteq
    \left(
        M_m((A,\rho)\star
        (\Cs_r(\mathbb F_\infty),\tau_{\mathbb F_\infty})),
        (\rho\star\tau_{\mathbb F_\infty})\otimes\tr_m
    \right)
\]
is completely existential. In (the proof of) \cite[Theorem 4.3]{Rob25}, Robert constructs an embedding of \Cs-probability spaces
    \[
    (M_m(A),\rho\otimes\tr_m)\star(C(\mathbb T),\leb) \hookrightarrow \big(M_m((A,\rho)\star(\Cs_r(\mathbb F_\infty),\tau _{\mathbb{F}_\infty})),(\rho\star\tau_{\mathbb F_\infty})\otimes\tr_m\big)
    \]
    that is trivial on $M_m(A)$.
    Therefore, the completely existential embedding $\theta^{(m)}$ factors through the first free factor embedding 
    \[
    (M_m(A),\rho\otimes\tr_m) \hookrightarrow (M_m(A),\rho\otimes\tr_m)\star(C(\mathbb T),\leb),
    \]
    which is thus completely existential itself by the completely existential analogue of \cite[Lemma 1.2]{Rob25}.\\

\emph{About (ii)}: For notational convenience, we write $\rho_p:=\rho(p)^{-1}\rho $. Since $A$ is selfless, the \Cs-algebra $pAp$ is simple and thereby $\rho_p$ is GNS-faithful. Fix the data provided by Theorem \ref{AbstractCriterion-completely-selfless} (iii): a faithful representation $(\pi,\cal{H},\xi)$ of $(A,\rho)$, a unit vector $\eta\in\ell^2$, an ultrafilter $\mathcal U$ over $I$, and an isometry $T\in \cal{B}((\cal{H}\otimes\ell^2)^{\mathcal U})$. Identify $A$ with its image under the diagonal embedding, and set $\zeta :=(\xi\otimes\eta)_{i\to\mathcal U}$. We then have that $T^*aT=\rho(a)1$ and $T^*a\zeta=0$ for all $a\in A$, that  $T$  commutes with every operator $(\id_{\cal{H}}\otimes d)_{i\to\mathcal U}$ with $d \in \mathcal{B}(\ell^2)$, and that $T+T^*\in(\pi\otimes1_{\cal{B}(\ell^2)})^{\mathcal U}(A^{\mathcal U})$.
Consider the projection $P:=(\pi(p)\otimes1_{\cal{B}(\ell^2)})_{i\to\calU}$, and define 
    \[
    S:=\rho(p)^{-1/2}(PTP) \in\cal{B}(P(\mathcal{H}\otimes\ell^2)^{\mathcal U})\cong \cal{B}(\bigl(\pi(p)\mathcal{H}\otimes\ell^2\bigr)^{\mathcal U})
    \]
and $\zeta_p :=\rho(p)^{-1/2}P\zeta$. Note that the vector $\zeta_p$ is the constant ultrapower vector associated with $\rho(p)^{-\frac{1}{2}}\pi(p)\xi$, which implements the state $\rho_p$ in the faithful representation of $pAp$ on $\pi(p)\mathcal{H}$.

We verify the conditions of Theorem \ref{AbstractCriterion-completely-selfless}. For this,  set $\pi_p(a):=\pi(a)|_{\pi(p)\mathcal H}$ for $a\in pAp$, set $\xi_p:=\rho(p)^{-1/2}\pi(p)\xi$, and note that by
    \[
    S^*S=\rho(p)^{-1}(PT^*PTP) =\rho(p)^{-1}\left( P(\rho(p)1)P\right)=P,
    \]
the operator $S$ is an isometry in $\cal{B}(P(\mathcal{H}\otimes\ell^2)^{\mathcal U})$. Furthermore, for $a\in pAp$, 
    \[
    S^*aS=\rho(p)^{-1}(PT^*aTP)=\rho(a)\rho(p)^{-1}P=\rho_p(a)P
    \]
and similarly, $S^*a\zeta_p=\rho(p)^{-1}PT^*a\zeta=0$. Next, 
    \[
    S+S^*=\rho(p)^{-\frac{1}{2}}P(T+T^*)P\in(\pi_p\otimes1)^{\mathcal U}\bigl((pAp)^{\mathcal U}\bigr),
    \]
    where we used that $pA^{\mathcal U}p\subseteq (pAp)^{\mathcal U}$.
    Finally, both $P$ and $T$ commute with $(\id_{\cal{H}}\otimes d)_{i\to\mathcal U}$ for every $d\in \mathcal{B}(\ell^2)$, so that $S$ commutes with their respective restrictions to the compressed Hilbert space. Hence the conditions of Theorem \ref{AbstractCriterion-completely-selfless} (iii) hold for the \Cs-probability space $(pAp,\rho_p)$, which is thereby completely selfless.
\end{proof}

\begin{lemma}\label{lem:corners}
    Let $(A,\tau)$ be a tracial \Cs-probability space, and assume that $p$ is a full projection in $A$.
    Then $(A,\tau)$ is completely selfless if and only if $(pAp,\tau(p)^{-1}(\tau|_{pAp}))$ is completely selfless.
\end{lemma}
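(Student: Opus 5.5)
The forward implication is Lemma~\ref{lem:permanence}(ii) applied to the full projection $p$. Here $\tau(p)>0$ because $\tau$ is GNS-faithful and $p\neq0$; indeed a tracial GNS-faithful state is faithful, since $\tau(x^*x)=0$ forces $\tau(y^*x^*xy)=\tau(xyy^*x^*)\le\|y\|^2\tau(xx^*)=\|y\|^2\tau(x^*x)=0$ for all $y$, hence $\pi_\tau(x)=0$. The work therefore lies in the converse. Assume $(pAp,\tau_p)$ is completely selfless, where $\tau_p:=\tau(p)^{-1}\tau|_{pAp}$. I would reduce to matrix amplifications and then to a corner, using both parts of Lemma~\ref{lem:permanence}.

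Since $p$ is full in the unital algebra $A$, we have $1\in ApA$. A standard argument then gives $m\in\mathbb N_{\geq1}$ and elements $x_1,\dots,x_m\in A$ with $\sum_k x_k^*px_k$ invertible. I would prefer partial isometries in $M_m(A)$ witnessing $1_A\precsim p\otimes1_m$, that is, $1_A\oplus 0$ Murray--von Neumann subequivalent to $\operatorname{diag}(p,\dots,p)$.

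To obtain these partial isometries cleanly, I would use that $pAp$ completely selfless implies $pAp$ selfless and tracial. It is therefore simple, and by the dichotomy recalled in the introduction it has strict comparison and stable rank one. Simplicity of $pAp$ together with fullness gives simplicity of $A$, since $A$ is Morita equivalent to $pAp$. Stable rank one should let us upgrade the algebraic relation $1\in ApA$ to an honest subequivalence $1\oplus0_{m-1}\precsim p\otimes 1_m$ in $M_m(A)$; one can use Brown's theorem to realise $A$ as a corner $q M_m(pAp) q$.

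Concretely, I aim to find a projection $q\in M_m(pAp)$ with $qM_m(pAp)q\cong A$ in a trace-compatible way. By Lemma~\ref{lem:permanence}(i), $(M_m(pAp),\tau_p\otimes\tr_m)$ is completely selfless. Then Lemma~\ref{lem:permanence}(ii) applied to $q$ shows that the corner with its normalised trace is completely selfless. The normalised restriction of $\tau_p\otimes\tr_m$ to this corner must coincide with $\tau$ under the isomorphism, and this follows from uniqueness of the trace on a corner or from explicit computation: if $v$ is a partial isometry with $v^*v=1\oplus0$ and $vv^*=q\le p\otimes1_m$, then $a\mapsto vav^*$ transports $\tau$ to $\tau(p)^{-1}\cdots$ up to normalisation, because $\tau\otimes\mathrm{Tr}$ is a trace.

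The main obstacle is producing the partial isometry $v\in M_m(A)$ with $v^*v=1\oplus0$ and $vv^*\le p\otimes 1_m$. Fullness gives only $1=\sum_k y_kpx_k$. The standard remedy, for $m$ large, is the element $w=\sum_k e_{1k}\otimes p x_k$. It satisfies $w^*w=\sum_k x_k^*px_k$ after a suitable choice that makes this invertible; one can take $x_k$ so that $\sum_k x_k^*px_k\ge1$, using fullness and positivity. The partial isometry is then obtained by polar decomposition, $v=w(w^*w)^{-1/2}$, which lies in $M_m(A)$ with $v^*v=1\oplus0$ and $vv^*\le p\otimes1_m$. After transposing indices to a column, this requires no stable rank one, so I would try this elementary route first.
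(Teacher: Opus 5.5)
Your proposal is correct and follows the paper's proof. The forward direction is Lemma~\ref{lem:permanence}(ii), and the converse realises $A$ as a trace-compatible corner $qM_m(pAp)q$ and then applies Lemma~\ref{lem:permanence}(i) followed by (ii), exactly as the paper does; the paper only asserts that fullness yields such $m$ and $q$, while you construct them explicitly via the column $w=\sum_k e_{k1}\otimes px_k$ and $v=w(e_{11}\otimes h^{-1/2})$ with $h=\sum_k x_k^*px_k$ (the row $\sum_k e_{1k}\otimes px_k$ you first wrote does not give $w^*w=e_{11}\otimes h$, and the detour through strict comparison, stable rank one and Brown's theorem is unnecessary).
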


\begin{proof}
Let $\tau_p:=\tau(p)^{-1}(\tau|_{pAp})$. If $(A,\tau)$ is completely selfless, so is $(pAp,\tau_p)$ by Lemma~\ref{lem:permanence}.

Conversely, if $(pAp,\tau_p)$ is completely
selfless, the fullness of $p$ implies that there exist an integer $m\in\mathbb N_{\geq1}$ and a projection $P \in M_m(pAp)$ such that $A \cong PM_m(pAp)P$, where the isomorphism intertwines the states $\tau$ and $\left((\tau_{p}\otimes\tr_{m})(P)\right)^{-1}(\tau_{p}\otimes\tr_{m})|_{PM_{m}(pAp)P}$. Lemma~\ref{lem:permanence}, applied first to the matrix
amplification and then to its corner, therefore yields
complete selflessness of $(A,\tau)$. \end{proof}

Hecke \Cs-algebras may be viewed as deformations of reduced group \Cs-algebras of Coxeter systems, obtained by completing the corresponding Iwahori--Hecke algebras; they have been studied in \cite{CKL21, RS22, Klisse23-1, Klisse23-2, KP25, NPRS25}. In the context of the present article, we restrict to the case of right-angled Coxeter groups of the form $W:=(\mathbb Z/2\mathbb Z)^{\star n}$, whose Hecke \Cs-algebras admit a particularly simple description in terms of reduced free products of 2-dimensional Abelian \Cs-algebras. More precisely, for $n\in\mathbb N_{\geq1}$ and a multi-parameter $q=(q_1,\ldots,q_n)\in \mathbb{R}_{>0}^n$, the corresponding \emph{Hecke \Cs-algebra} $C_{\text{r},q}^\ast (W)$ is of the form
\[
C_{\text{r},q}^\ast((\mathbb Z/2\mathbb Z)^{\star n})
    \cong
    ( \mathbb{C}\oplus \mathbb{C},\tau_{q_1}) \star \cdots \star ( \mathbb{C}\oplus \mathbb{C},\tau_{q_n}),
    \]
where $\tau_{q_i}(z_1,z_2):=(1+q_i)^{-1}(z_1 +q_iz_2)$ for $1\leq i\leq n$ and $z_1,z_2\in\mathbb C$; see \cite[Section~1.10]{CKL21} and \cite[Section~2.3]{RS22}. In this picture, the canonical faithful tracial state on $C_{\text{r},q}^\ast((\mathbb Z/2\mathbb Z)^{\star n})$ identifies with the free product state $\tau_{q}:=(\tau_{q_1} \star \ldots \star \tau_{q_n})$. Note that for $q_1=\cdots=q_n=1$, the construction recovers the
reduced group \Cs-algebra
$\Cs_{\mathrm r}((\mathbb Z/2\mathbb Z)^{\star n})$, equipped with its canonical tracial state.

\begin{theorem}\label{thm:Hecke}
    Let $n\geq 3$ be an integer and $q=(q_1,\dots,q_n)\in \mathbb{R}_{>0}^n$ a multi-parameter. Then the \Cs-probability space $(\Cs_{\text{r}, q}((\mathbb Z/2\mathbb Z)^{\star n}),\tau_{q})$ is completely selfless if and only if $\sum_{i=1}^n\frac{\min\{1,q_i\}}{1+q_i}>1$.
\end{theorem}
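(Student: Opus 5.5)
The proof splits into the two directions. The necessity direction rests on the fact that complete selflessness forces simplicity. The sufficiency direction reduces, via corners, to Corollary~\ref{thm:free-product}.

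\textbf{Necessity.} Complete selflessness implies selflessness. Since $\tau_q$ is tracial, selflessness forces $\Cs_{\mathrm r,q}((\mathbb Z/2\mathbb Z)^{\star n})$ to be simple, by the dichotomy recalled in the introduction (\cite[Theorem~3.1]{Rob25}). I then invoke Dykema's description of reduced free products of finite-dimensional abelian algebras \cite{DR98,DR00} (see also \cite{RS22}). Write the $i$-th factor as $\mathbb C p_i\oplus\mathbb C(1-p_i)$ with minimal projections of traces $\tfrac{1}{1+q_i}$ and $\tfrac{q_i}{1+q_i}$. Then the free product is simple if and only if there is no nonzero central projection arising from a family of minimal projections $e_i$ (one in each factor) whose traces sum to more than $n-1$. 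Equivalently, simplicity holds precisely when $\sum_i \min\{\tau(p_i),\tau(1-p_i)\}>1$ fails to be violated. Since $\min\{\tfrac1{1+q_i},\tfrac{q_i}{1+q_i}\}=\tfrac{\min\{1,q_i\}}{1+q_i}$, this yields the stated condition.

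In the boundary case $\sum_i \tfrac{\min\{1,q_i\}}{1+q_i}=1$, I expect the algebra still to be non-simple, or at least to have a non-unique trace. I would check this against Dykema's formula; it is the delicate point of this direction.

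\textbf{Sufficiency.} Assume $\sum_i\tfrac{\min\{1,q_i\}}{1+q_i}>1$. Write $A=(A',\tau')\star(B,\psi)$, where $(B,\psi)$ is the free product of two (or more) of the factors and $A'$ is the free product of the rest. Since $n\geq 3$, $A'\not\cong\mathbb C$. I aim to apply Corollary~\ref{thm:free-product}, which requires a Haar unitary in $B^\psi$; it is automatic here since $\psi$ is tracial. The difficulty is that $(\mathbb C^2,\tau_{q_1})\star(\mathbb C^2,\tau_{q_2})$ is only diffuse on a corner. By \cite{DR98}, $\Cs(p_1,p_2)$ is a direct sum of some minimal-projection summands and a part $C([0,1])\otimes M_2$ with diffuse measure. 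So there is a Haar unitary in a corner, but not necessarily a unital one.

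To handle this I pass to corners using Lemma~\ref{lem:corners}. First, simplicity (which holds by the criterion) makes every nonzero projection full. I choose a projection $e$ in the diffuse part of $\Cs(p_1,p_2)$ such that $eAe$ again decomposes, by Dykema's compression formulas, as a reduced free product $(eBe,\psi_e)\star(C,\gamma)$ with $eBe$ containing a unital diffuse abelian subalgebra and $C\not\cong\mathbb C$. Dykema's results on compressions of free products by projections in one factor give such a decomposition, with $C$ an amplified free product of the remaining factors. Corollary~\ref{thm:free-product} then makes $(eAe,\tau_e)$ completely selfless, and Lemma~\ref{lem:corners} transfers this to $(A,\tau_q)$.

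If no pair of factors has a nontrivial diffuse part, I would reorder the factors. Failing that, I would use the full Dykema structure theorem for $A$: for $n\geq3$ under the strict inequality, $A$ is isomorphic to an interpolated free group factor-type algebra, with a free semicircular/Haar piece in a corner.

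\textbf{Main obstacle.} I expect the main obstacle to be this compression step: producing a projection $e$ for which $eAe$ is explicitly a reduced free product whose first factor carries a Haar unitary, with the other factor nontrivial. This requires citing Dykema's compression formulas precisely, and I would first attempt it with $e$ chosen in $\Cs(p_1,p_2)$ for the two largest parameters.
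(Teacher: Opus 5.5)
Your plan is essentially the paper's proof. Necessity comes from ``complete selflessness $\Rightarrow$ simplicity'' together with Dykema's simplicity criterion. Sufficiency goes by writing $\Cs(p_1,p_2)=D\oplus F$ with $D\neq 0$ diffuse and $F$ finite-dimensional, and compressing by the central projection $e=1_D$ to get $eAe\cong D\star eB'e$, where $B'$ is generated by $\mathbb{C}e\oplus F$ and the remaining factors (so the second factor is non-trivial, but it is not an amplified free product). One then applies Corollary~\ref{thm:free-product} and returns to $A$ via Lemma~\ref{lem:corners}, since $e$ is full by simplicity.

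The points you leave open are settled by \cite{Dyk99}. Theorem~4.8 there gives simplicity exactly under the strict inequality, so the boundary case is non-simple; note that your ``no nonzero central projection'' condition is equivalent to $\sum_i\frac{\min\{1,q_i\}}{1+q_i}\geq1$, not to the strict inequality. Propositions~2.7--2.8 give $D\neq0$ for any pair of factors, since two free non-trivial projections of traces $\alpha,\beta$ have a generic part of trace $2\min\{\alpha,\beta,1-\alpha,1-\beta\}>0$. So no reordering or fallback is needed, provided $e$ is the central unit of $D$ rather than an arbitrary projection inside it.
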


\begin{proof}
By \cite[Theorem~4.8]{Dyk99}, the \Cs-algebra $A:=\Cs_{\mathrm r,q}((\mathbb Z/2\mathbb Z)^{\star n})$ is simple precisely when the stated inequality holds. Since complete selflessness implies simplicity, it thereby remains to prove the sufficiency of the assertion. For this, assume that the inequality $\sum_{i=1}^n\frac{\min\{1,q_i\}}{1+q_i}>1$ holds. By
\cite[Proposition~4.7]{CKL21}, we may replace each parameter greater than $1$ by its inverse and hence assume that $0<q_i\leq1$ for every $1 \leq i \leq n$. By \cite[Proposition~2.7]{Dyk99}, the first two free factors
generate a subalgebra $D\oplus C\subseteq A$, where
$D\neq0$, $C$ is finite-dimensional, and
the normalised tracial state on $D$ is diffuse.
Hence, \cite[Theorem~5.4]{Thi24} implies that
$(D,\tau_q(e)^{-1}\tau_q|_D)$ with $e:=1_D$ contains a Haar unitary.

Let $B\subseteq A$ be the \Cs-subalgebra generated by
$\mathbb Ce\oplus C$ and the remaining free factors.
By \cite[Proposition~2.8]{Dyk99} there is a state-preserving isomorphism
\[
    \left(eAe,\frac{\tau_q|_{eAe}}{\tau_q(e)}\right)
    \cong
    \left(D,\frac{\tau_q|_D}{\tau_q(e)}\right)
    \star
    \left(eBe,\frac{\tau_q|_{eBe}}{\tau_q(e)}\right),
\]
where $eBe\neq\mathbb Ce$.

Corollary~\ref{thm:free-product} now implies that
$(eAe,\tau_q(e)^{-1}\tau_q|_{eAe})$ is completely selfless.
Since $A$ is simple and $e\neq0$, the projection $e$ is full.
Lemma~\ref{lem:corners} therefore yields complete
selflessness of $(A,\tau_q)$.
\end{proof}

We now record some consequences for ordered $K$-theory and the Elliott invariant of the Hecke \Cs-algebras considered in Theorem \ref{thm:Hecke}.

\begin{remark}\label{rem:ordered-K-theory}
Let $A$ be one of the Hecke \Cs-algebras covered by Theorem~\ref{thm:Hecke}, and let $\tau$ denote its unique tracial state. Since $A$ is simple and has strict comparison of positive elements, we have
\[
    K_0(A)^+
    =
    \{0\}\cup\{x\in K_0(A)\mid\tau_*(x)>0\}.
\]
Thus, the positive cone is completely determined by the trace pairing. Consequently, the Elliott invariant of $A$ contains exactly the same information as its unordered version, which in this case consists of $K_0(A)$, its distinguished order unit $[1_A]_0$, $K_1(A)$, and the trace pairing $\tau_*$.

In~\cite[Remark~5.5]{RS22}, Raum and Skalski highlight the problem of computing ordered $K$-theory for reduced free products beyond those that satisfy the Avitzour condition, which are treated in the work of Dykema and R{\o}rdam~\cite{DR98}. Theorem~\ref{thm:free-product} and Theorem~\ref{thm:Hecke} address this problem for a broad class of reduced free products without the Avitzour condition. Together with the computations in~\cite{RS22}, Theorem~\ref{thm:Hecke} therefore yields a complete description of the Elliott invariant for the Hecke \Cs-algebras covered by Theorem~\ref{thm:Hecke}.
\end{remark}

\vspace{3mm}


\section{$q$-Gaussian \Cs-algebras}\label{sec:q-gaussian}

In the present section we apply our previous results to $q$-Gaussian \Cs-algebras, establishing selflessness of the inclusions generated by any two orthogonal unit vectors throughout the range $-1<q<1$ with respect to the corresponding canonical tracial state. This strengthens the simplicity and trace-uniqueness results of \cite{AmrutamJekelWasilewski26}  (see also \cite[Theorem 4.1]{BCKW23}) and additionally yields strict comparison and stable rank one, with the same conclusions for all unital intermediate \Cs-algebras.

We begin by recalling the construction of $q$-Gaussian \Cs-algebras. Fix $q\in(-1,1)$ and a real Hilbert space $\mathcal{H}_{\mathbb R}$. Furthermore, put $\mathcal{H}=\mathcal{H}_{\mathbb R}\otimes_{\mathbb R}\mathbb C$. For every $r \in \mathbb{N}_{\geq 1}$ define the \emph{symmetrisation operator} $P_q^r \in \mathcal{B}(\mathcal{H}^{\otimes r})$ by
\[
P_q^r(\zeta_1\otimes\cdots\otimes \zeta_r):= \sum_{\sigma \in S_r} q^{i(\sigma)} (\zeta_{\sigma(1)}\otimes \cdots \otimes \zeta_{\sigma(r)})
\]
for $\zeta_1, \ldots, \zeta_r \in \mathcal{H}$, where $S_r$ is the symmetric group of permutations of $r$ elements and $$i(\sigma):=\#\{(a,b)\mid 1\leq a<b\leq r,\; \sigma(a)>\sigma(b)\}$$ is the corresponding number of inversions. By \cite[Proposition 1]{BozejkoSpeicher91} the operator $P_q^r$ is strictly positive and invertible, so that $\langle \zeta_1, \zeta_2\rangle_q:=\langle P_q^r \zeta_1, \zeta_2 \rangle$ for $\zeta_1, \zeta_2 \in \mathcal{H}^{\otimes r}$ defines an inner product on $\mathcal{H}^{\otimes r}$; we denote the associated Hilbert space by $\mathcal{H}_q^{\otimes r}$.

The \emph{$q$-Fock space} of $\mathcal{H}$ is defined by
\[
    \mathcal{F}_q(\mathcal{H}):=\mathbb C\Omega\oplus \bigoplus_{r=1}^\infty \mathcal{H}_q^{\otimes r},
\]
where $\Omega$ is a unit vector called the \emph{vacuum vector}. For $\zeta\in\mathcal H$, the \emph{left creation operator}
$l_q(\zeta)$ is initially defined on the algebraic span of elementary tensors via $l_q(\zeta)\Omega:=\zeta$ and $l_q(\zeta)(\zeta_1\otimes\cdots\otimes\zeta_r):=\zeta\otimes\zeta_1\otimes\cdots\otimes\zeta_r$ and extends to a bounded operator on $\mathcal F_q(\mathcal H)$. Its adjoint $l_q^\ast(\zeta):=(l_q (\zeta))^\ast $ is called the \emph{annihilation operator} and satisfies
\[
    l_q^\ast(\zeta)\Omega=0,\qquad
l_q^\ast (\zeta)(\zeta_1\otimes\cdots\otimes \zeta_r)
=\sum_{j=1}^r q^{j-1}\langle \zeta,\zeta_j\rangle
 (\zeta_1\otimes\cdots\otimes\widehat{\zeta_j}
 \otimes\cdots\otimes \zeta_r)
\]
for $\zeta_1,\ldots,\zeta_r \in \mathcal{H}$, so that in particular for $\zeta_1,\zeta_2 \in \mathcal{H}$,
\begin{eqnarray} \label{q-Relation}
l_q^\ast(\zeta_1)l_q(\zeta_2)-q\;l_q(\zeta_2)l_q^\ast(\zeta_1) =\langle \zeta_1,\zeta_2 \rangle1.
\end{eqnarray}

The \Cs-subalgebra $\mathcal{A}_q (\mathcal{H}_\mathbb{R})$ of $\mathcal{B}(\mathcal{F}_q(\mathcal{H}))$ generated by all operators of the form
\[
s_q(\xi):=l_q(\xi)+l_q^\ast (\xi) \in \mathcal{B}(\mathcal{F}_q(\mathcal{H}))
\]
with $\xi \in \mathcal{H}_\mathbb{R}$ is called the \emph{$q$-Gaussian \Cs-algebra}. It carries a  canonical faithful tracial state $\tau$, called the \emph{vacuum state}, obtained by restricting the vector state associated with $\Omega$; see \cite[Definition~1.1, Remark~1.2 and Proposition~2.3]{BKS97}. All vectors $\zeta_1, \ldots, \zeta_r \in \mathcal{H}$ admit a unique operator $W_q(\zeta_1 \otimes \cdots \otimes \zeta_r) \in \mathcal{A}_q(\mathcal{H}_\mathbb{R})$ satisfying $W_q(\zeta_1 \otimes \cdots \otimes \zeta_r)\Omega =\zeta_1 \otimes \cdots \otimes \zeta_r$. These operators, together with $W_q(\Omega):=1$, are called \emph{Wick operators}. Their linear span is the unital $\ast$-algebra of polynomials in the operators $s_q(\xi)$ with $\xi\in\mathcal H_{\mathbb R}$, and is therefore norm-dense in $\mathcal A_q(\mathcal H_{\mathbb R})$; see \cite[Definition~2.5, Remarks~2.6, Proposition~2.7]{BKS97}.

For $r \in \mathbb{N}_{\geq 1}$ we write $[r]_q:=(1-q)^{-1}(1-q^r)$, $[0]_q!:=1$, and $[r]_q!:=\prod_{j=1}^r[j]_q$.

\begin{lemma}\label{lem:oscillator} Let $\xi$ be a unit vector in $\cal{H}_{\R}$. The map
\[
V_\xi(\delta_r\otimes\zeta):=\frac{1}{\sqrt{[r]_q!}}(l_q(\xi))^r\zeta \quad \text{ for } r\in \mathbb{N}, \zeta\in \ker(l_q^\ast (\xi))
\]
extends uniquely to a unitary operator $V_\xi \in \mathcal{B}\bigl(\ell^2\otimes\ker(l_q^*(\xi)),\mathcal F_q(\mathcal H)\bigr)$ satisfying
\begin{eqnarray} \label{WeightedShift}
        \bigl( V_\xi^\ast l_q(\xi)V_\xi \bigr) (\delta_r\otimes\zeta) =\sqrt{[r+1]_q}\,(\delta_{r+1}\otimes\zeta).
\end{eqnarray}
\end{lemma}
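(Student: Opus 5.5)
We will show that $V_\xi$ is isometric with dense range, and then check \eqref{WeightedShift} directly from the definition. Write $l:=l_q(\xi)$ and $\mathcal K_0:=\ker(l^\ast)$. The basic identity comes from iterating the $q$-relation \eqref{q-Relation} with $\zeta_1=\zeta_2=\xi$, using $\|\xi\|=1$:
\[
l^\ast l^{r}=[r]_q\,l^{r-1}+q^{r}\,l^{r}l^\ast
\]
for $r\geq1$. This follows by induction from $l^\ast l=1+q\,ll^\ast$ and $[r+1]_q=1+q[r]_q$. Applying it to $\zeta\in\mathcal K_0$ gives $l^\ast l^r\zeta=[r]_q\,l^{r-1}\zeta$.

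\textbf{Step 1: isometry.} For $\zeta,\zeta'\in\mathcal K_0$ and $r\neq s$, say $r>s$, we move annihilators across repeatedly:
\[
\langle l^s\zeta',l^r\zeta\rangle=\langle \zeta',(l^\ast)^s l^r\zeta\rangle=c\,\langle\zeta', l^{r-s}\zeta\rangle=c\,\langle l^\ast\zeta',l^{r-s-1}\zeta\rangle=0
\]
for some constant $c$. For $r=s$ we get instead $\langle l^r\zeta',l^r\zeta\rangle=[r]_q!\,\langle\zeta',\zeta\rangle$. Hence the prescribed map preserves inner products on the algebraic tensor product $\mathrm{span}\{\delta_r\}\odot\mathcal K_0$. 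It therefore extends uniquely to an isometry $V_\xi$ on $\ell^2\otimes\mathcal K_0$, and uniqueness is clear since elementary tensors are total.

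\textbf{Step 2: surjectivity.} This is the main obstacle. We must show that $\mathcal M:=\overline{\mathrm{span}}\{l^r\zeta\mid r\in\mathbb N,\ \zeta\in\mathcal K_0\}$ equals $\mathcal F_q(\mathcal H)$. First we would use the fact that the number operator $N_\xi:=\sum_{r}$ (weighted) acts as a "Wold-type" decomposition. The concrete plan is as follows. The operator $l$ is bounded, and since $|q|<1$ it is bounded below, as $l^\ast l=1+q\,ll^\ast\geq (1-|q|\,\|l\|^2)$ fails in general. So instead we argue degree by degree. The space $\mathcal M$ is invariant under $l$, and by the identity above it is also invariant under $l^\ast$. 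Hence $\mathcal M^\perp$ is invariant under $l$ and $l^\ast$, and consequently under the number-like operator $ll^\ast$.

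Suppose $\eta\in\mathcal M^\perp$ is nonzero. Then $l^\ast\eta\in\mathcal M^\perp$, while $\eta\notin\mathcal K_0$ because $\mathcal K_0\subseteq\mathcal M$. To turn this into a contradiction we restrict to finite degree. Both $l$ and $l^\ast$ shift the tensor degree, so $\mathcal M$ is graded, and $\mathcal M^\perp=\bigoplus_n(\mathcal M^\perp\cap\mathcal H_q^{\otimes n})$. Take a nonzero homogeneous $\eta\in\mathcal M^\perp$ of minimal degree $n$. Since $l^\ast\eta\in\mathcal M^\perp$ has degree $n-1$, minimality forces $l^\ast\eta=0$. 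Thus $\eta\in\mathcal K_0\subseteq\mathcal M$, a contradiction. In degree $0$ we have $\Omega\in\mathcal K_0$ directly.

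For the grading claim, we use that $\mathcal K_0$ is graded: $l^\ast$ lowers degree by one, so its kernel splits into homogeneous components. It follows that $\mathcal M$ is the closed span of homogeneous vectors, so its orthogonal projection commutes with the degree projections.

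\textbf{Step 3: the weighted shift identity.} Since $V_\xi$ is unitary, $V_\xi^\ast=V_\xi^{-1}$. We compute
\[
l\,V_\xi(\delta_r\otimes\zeta)=\tfrac{1}{\sqrt{[r]_q!}}\,l^{r+1}\zeta=\sqrt{[r+1]_q}\,V_\xi(\delta_{r+1}\otimes\zeta),
\]
and applying $V_\xi^\ast$ to both sides gives \eqref{WeightedShift}.
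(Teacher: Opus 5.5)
Your proof is correct. Steps 1 and 3 coincide with the paper's: the same identity $l^\ast l^r=[r]_q\,l^{r-1}+q^r\,l^r l^\ast$ gives the orthogonality and the norms $[r]_q!$, and the weighted shift formula is read off from $l^{r+1}\zeta$. Your surjectivity argument, however, is genuinely different.

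The paper first shows that $l$ is bounded below: $l^\ast l\geq 1$ for $q\geq0$ and $l^\ast l\geq(1+q)1$ for $q<0$. It then uses that $\mathcal F_0$ is invariant under both $l$ and $l^\ast$, so $\mathcal F_0^\perp$ is reducing. The restriction of $l$ to $\mathcal F_0^\perp$ has closed range and an injective adjoint there, because $\ker(l^\ast)\subseteq\mathcal F_0$. Hence $l$ maps $\mathcal F_0^\perp$ onto itself, and iterating gives $\mathcal F_0^\perp\subseteq\bigcap_N\operatorname{ran}(l^N)\subseteq\bigcap_N\bigoplus_{r\geq N}\mathcal H_q^{\otimes r}=\{0\}$.

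You instead use the grading directly. Since $l^\ast$ strictly lowers degree, $\ker(l^\ast)$ is spanned by homogeneous vectors, and hence so are $\mathcal M$ and $\mathcal M^\perp$. A nonzero graded $l^\ast$-invariant subspace must then meet $\ker(l^\ast)$ in its lowest degree. Your route avoids the lower bound on $l$ and the case distinction on the sign of $q$. It also needs only $l$-invariance of $\mathcal M$, not $l^\ast$-invariance, at the cost of the gradedness bookkeeping, which you do supply. The paper's route uses the grading only through the coarse inclusion $\operatorname{ran}(l^N)\subseteq\bigoplus_{r\geq N}\mathcal H_q^{\otimes r}$.

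You should delete the aborted sentences at the start of Step 2, namely the ``number operator'' remark and the claim that the lower bound ``fails in general''. They play no role in your argument. The second is also misleading: $l$ is in fact bounded below for every $-1<q<1$, which is exactly what the paper uses.
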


\begin{proof}
Using the identity in \eqref{q-Relation}, an inductive argument implies that for every $r \in \mathbb{N}_{\geq 1}$,
    \[
        l_q^\ast(\xi)l_q(\xi)^r=[r]_q l_q(\xi)^{r-1}+q^rl_q(\xi)^rl_q^\ast(\xi).
    \]
It follows that the subspaces $l_q(\xi)^r\ker(l_q^\ast (\xi)) \subseteq \mathcal{F}_q(\mathcal{H})$ are pairwise orthogonal. Furthermore, 
    \[
        \langle l_q(\xi)^r\zeta_1,l_q(\xi)^r\zeta_2\rangle_q =[r]_q!\,\langle\zeta_1,\zeta_2\rangle_q \quad \text{for all }\zeta_1,\zeta_2\in\ker(l_q^\ast (\xi)).
    \]
Thus $V_\xi$ is an isometry onto a subspace $\mathcal F_0 \subseteq \mathcal{F}_q(\mathcal{H})$ invariant under both $l_q(\xi)$ and $l_q^\ast(\xi)$.

For $q\geq0$ we have that $l_q^\ast(\xi)l_q(\xi)\geq1$, while $l_q^\ast(\xi)l_q(\xi)\geq(1+q)1$ for $q<0$; in both cases the operator $l_q(\xi)$ is bounded below. Its restriction to $\mathcal{F}_0^\perp$ hence has closed range and its adjoint has zero kernel. It follows that $l_q(\xi)$ is onto $\mathcal{F}_0^\perp$ and thereby
    \[
        \mathcal{F}_0^\perp\subseteq \bigcap_{N\in \mathbb{N}_{\geq 1}}\operatorname{ran}(l_q(\xi)^N)\subseteq\bigcap_{N \in \mathbb{N}_{\geq 1}} \bigoplus_{r\geq N}\mathcal{H}_q^{\otimes r}=\{0\}.
    \]
This proves the surjectivity of $V_\xi$ and the formula in \eqref{WeightedShift}.
\end{proof}

Let us introduce some notation that will be used in the next result.
For $N\in \mathbb{N}_{\geq 1}$, and a unit vector $\xi\in\cal{H}_{\mathbb R}$, let $p_{N,\xi} \in \mathcal{B}(\mathcal{F}_q(\mathcal{H}))$ be the projection onto the range of the operator $l_q(\xi)^N$. Observe that, as a consequence of Lemma~\ref{lem:oscillator},  $p_{N,\xi}$ strongly converges to $0$. Moreover, for any $N\in \mathbb{N}_{\geq1}$, we further have that $p_{N,\xi}^\perp\,\Omega=\Omega$.

By \cite[Theorem~2.13]{BKS97}, for every unit vector $\xi\in\cal{H}_{\mathbb R}$ there exist a
$\tau$-preserving conditional expectation $\mathbb E_\xi\colon \mathcal A_q(\mathcal H_{\mathbb R}) \to\Cs(s_q(\xi))$ satisfying 
\begin{eqnarray}\label{eq:q-expectation-wick}
    \mathbb E_\xi
    \bigl(W_q(\zeta_1\otimes\cdots\otimes\zeta_r)\bigr)=
    \left(\prod_{j=1}^r\langle\xi,\zeta_j\rangle\right)
    W_q(\xi^{\otimes r})
\end{eqnarray}
for all $\zeta_1,\ldots,\zeta_r \in \mathcal{H}$.

\begin{proposition}\label{prop:two-estimates}
The following statements hold true:
\begin{enumerate}[leftmargin=*,label=\textup{(\roman*)}]
\item $\lim_{N\to\infty}\|p_{N,\eta}(a-\mathbb E_\eta(a))p_{N,\eta}\| = 0$ for every $a \in \mathcal A_q(\mathcal H_{\mathbb R})$, and for every unit vector $\eta\in\cal{H}_{\mathbb R}$, 
\item $\lim_{n\to\infty} \|\mathbb E_\eta(u^{-n}au^n)-\tau(a)1\|=0$ for every $a \in \mathcal A_q(\mathcal H_{\mathbb R})$, and for every pair of orthogonal unit vectors $\xi,\eta \in \cal{H}_{\R}$, where $u:=\exp(i s_q(\xi))$.
\end{enumerate}
\end{proposition}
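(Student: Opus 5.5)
For (i), I will reduce to Wick operators by density and the uniform bound $\|p_{N,\eta}(a-\mathbb E_\eta(a))p_{N,\eta}\|\le 2\|a\|$. By multilinearity of $W_q$ I will expand each letter as $\zeta_j=\langle\eta,\zeta_j\rangle\eta+\zeta_j'$ with $\zeta_j'\perp\eta$. The term with all letters equal to $\eta$ is exactly $\mathbb E_\eta(W_q(\zeta_1\otimes\cdots\otimes\zeta_r))$ by \eqref{eq:q-expectation-wick}. So it suffices to show that $\|p_{N,\eta}W_q(\zeta_1\otimes\cdots\otimes\zeta_r)p_{N,\eta}\|\to0$ as soon as at least one letter is orthogonal to $\eta$. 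For this I will use the Wick formula from \cite{BKS97}, which writes $W_q$ as a finite linear combination of normally ordered products $l_q(\cdot)\cdots l_q(\cdot)\,l_q^\ast(\cdot)\cdots l_q^\ast(\cdot)$ of the letters. In each such term the distinguished letter $\zeta'$ occurs either among the creations or among the annihilations. By taking adjoints, both cases reduce to bounding $\|l_q^\ast(\zeta')\,X\,p_{N,\eta}\|$, where $X$ is a product of annihilators.

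The key input is the $q$-relation \eqref{q-Relation} with $\langle\zeta',\eta\rangle=0$, which gives $l_q^\ast(\zeta')l_q(\eta)=q\,l_q(\eta)l_q^\ast(\zeta')$ and hence $l_q^\ast(\zeta')l_q(\eta)^M=q^Ml_q(\eta)^Ml_q^\ast(\zeta')$. In the picture of Lemma~\ref{lem:oscillator} this yields $\|l_q^\ast(\zeta')p_{M,\eta}\|\le C|q|^{M}$, using the boundedness of $l_q(\eta)^M/\sqrt{[M]_q!}$ on the relevant subspace; for $q=0$ the left-hand side is simply $0$. I will also use that $l_q^\ast(\eta)$ and $l_q^\ast(\zeta'')$ map $\operatorname{ran}l_q(\eta)^N$ into $\operatorname{ran}l_q(\eta)^{N-1}$ up to errors that are again $O(|q|^{N})$. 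This follows from $l_q^\ast(\eta)l_q(\eta)^r=[r]_ql_q(\eta)^{r-1}+q^rl_q(\eta)^rl_q^\ast(\eta)$ together with the analogous commutation for orthogonal letters after decomposing. Pushing $p_{N,\eta}$ through the finitely many annihilators then gives the bound $O(|q|^{N-r})\to0$. I expect this bookkeeping of the error terms to be routine but tedious.

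For (ii), I will again reduce by density and the bound $\|\mathbb E_\eta(u^{-n}au^n)\|\le\|a\|$ to Wick words $a=W_q(\zeta_1\otimes\cdots\otimes\zeta_r)$ with letters in $\{\xi,\eta\}\cup\{\xi,\eta\}^\perp$. Since $\mathbb E_\eta$ is $\Cs(s_q(\eta))$-valued and $\tau$-preserving, $\mathbb E_\eta(x)$ is determined by the coefficients $\langle\eta^{\otimes k},x\Omega\rangle_q$. The plan is to show that for $x=u^{-n}au^n$ with $\tau(a)=0$ these coefficients tend to $0$, with enough uniformity in $k$ to get norm convergence. The mechanism I intend to use is mixing of $u^n=e^{ins_q(\xi)}$. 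The spectral measure of $s_q(\xi)$ with respect to $\Omega$ is the $q$-Gaussian law, which is absolutely continuous, so by Riemann--Lebesgue the components of $u^{\pm n}\Omega$ along each fixed $W_q(\xi^{\otimes m})\Omega$ tend to $0$. Combining this with the fact that only the $\xi$-letters of $a$ interact with $u$, I aim to show that $\mathbb E_\eta(u^{-n}au^n)$ is a finite combination of terms, each containing such a vanishing coefficient.

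The main obstacle is (ii). The element $u^n$ is not a polynomial, so infinitely many $\xi$-degrees interact with $a$, and weak mixing has to be upgraded to norm convergence in $\Cs(s_q(\eta))$. My first attempt will be to approximate $u^n$ in norm by polynomials $f(s_q(\xi))$ uniformly in $n$ on the spectrum, which is not available directly. Failing that, I will use the tensor-type factorisation of $\mathbb E_\eta$ restricted to $\mathcal A_q(\operatorname{span}\{\xi,\eta\})$: there, Wick words in $\xi$ and $\eta$ separate, so $\mathbb E_\eta(u^{-n}au^n)$ becomes a finite sum of terms $\tau(g_n(s_q(\xi)))\,W_q(\eta^{\otimes k})$ with $g_n\to0$ weakly. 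For the letters in $\{\xi,\eta\}^\perp$, I will use the orthogonal case of the $q$-relation as in (i).
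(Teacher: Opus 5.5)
Your treatment of (i) is essentially the paper's: the same reduction to Wick words whose letters equal $\eta$ or are orthogonal to it, the same normal ordering, the same key bound $\|l_q^\ast(\zeta')p_{M,\eta}\|\le C_q|q|^M$, and the same adjoint trick. Two small points. First, that bound needs upper \emph{and} lower bounds on $\|l_q(\eta)^M\zeta\|$, both of order $(1-q)^{-M/2}\|\zeta\|$ and uniformly comparable in $M$; Lemma~\ref{lem:oscillator} supplies these. Second, if you pick the \emph{rightmost} perpendicular annihilator, everything to its right is $l_q^\ast(\eta)^r$, and $l_q^\ast(\eta)^rp_{N,\eta}=p_{N-r,\eta}l_q^\ast(\eta)^rp_{N,\eta}$ holds exactly, so no error terms arise.

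Part (ii), however, has a genuine gap. Your fallback rests on a tensor-type factorisation in which only the $\xi$-letters of $a$ feel $u$, and this is false for $q\neq0$. From $s_q(\xi)s_q(\eta)s_q(\xi)\Omega=\xi\otimes\eta\otimes\xi+q\eta$ one gets $\mathbb E_\eta(s_q(\xi)s_q(\eta)s_q(\xi))=q\,s_q(\eta)$, not $\tau(s_q(\xi)^2)s_q(\eta)=s_q(\eta)$. The heuristic is in fact self-defeating. Take $a=s_q(\eta)$, which has $\tau(a)=0$ and no $\xi$-letters. Your tensor-type formula would give $\mathbb E_\eta(u^{-n}au^n)=\tau(u^{-n}u^n)\,a=a$ for all $n$, contradicting (ii). The decay comes precisely from the non-commutative interaction of the $\eta$-letters with $s_q(\xi)$; for $q=0$, freeness gives $\mathbb E_\eta(u^{-n}s_q(\eta)u^n)=|\tau(u^n)|^2s_q(\eta)$. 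So at present you have neither a mechanism for the decay nor a way to pass from weak mixing to norm convergence.

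The missing idea is to deduce (ii) from (i) applied to $\xi$.
\begin{enumerate}
\item By \eqref{eq:q-expectation-wick}, $\mathbb E_\eta\circ\mathbb E_\xi=\tau(\cdot)1$, and $u$ commutes with $\mathbb E_\xi(a)$. Hence $\mathbb E_\eta(u^{-n}au^n)-\tau(a)1=\mathbb E_\eta(u^{-n}bu^n)$ with $b:=a-\mathbb E_\xi(a)$.
\item Norm control of $\mathbb E_\eta$ comes from a compression. We have $\overline{\Cs(s_q(\eta))\Omega}\subseteq\ker l_q^\ast(\xi)=\operatorname{im}p_{1,\xi}^\perp$, and $\Cs(s_q(\eta))$ acts faithfully on this cyclic subspace. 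Therefore $\|\mathbb E_\eta(x)\|\le\|p_{1,\xi}^\perp xp_{1,\xi}^\perp\|$.
\item Inserting $p_{N,\xi}+p_{N,\xi}^\perp$ gives
\[
\|\mathbb E_\eta(u^{-n}au^n)-\tau(a)1\|\le\|p_{N,\xi}bp_{N,\xi}\|+2\|b\|\,\|p_{N,\xi}^\perp u^np_{1,\xi}^\perp\|.
\]
\end{enumerate}
The first term is small for large $N$ by (i) with $\xi$ in place of $\eta$, so choose $N$ first. For the second term, Lemma~\ref{lem:oscillator} gives $V_\xi^\ast uV_\xi\in\mathcal B(\ell^2)\otimes\mathbb C1$. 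Hence $\|p_{N,\xi}^\perp u^np_{1,\xi}^\perp\|^2=\sum_{r<N}|\langle\xi^{\otimes r},u^n\Omega\rangle_q|^2/[r]_q!$ is a \emph{finite} sum of Riemann--Lebesgue coefficients, which tends to $0$ as $n\to\infty$. This is what upgrades weak mixing to norm convergence. It also treats the $\eta$-letters and the letters in $\{\xi,\eta\}^\perp$ uniformly, with no coefficient bookkeeping.
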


\begin{proof}
By Lemma~\ref{lem:oscillator}, for $N \in \mathbb{N}_{\geq1}$, $r\in\mathbb N$,
and $\zeta_0\in\ker(l_q(\eta)^*)$, we have
\[
    \|l_q(\eta)^NV_\eta(\delta_r\otimes\zeta_0)\|^2
    =(1-q)^{-N}
      \prod_{j=r+1}^{r+N}(1-q^j)\,\|\zeta_0\|^2.
\]
Set $\alpha_q:=\prod_{j\geq1}(1-|q|^j)>0$ and $\beta_q:=\prod_{j\geq1}(1+|q|^j)<\infty$. The equality above in particular implies that for $\zeta\in\mathcal F_q(\mathcal H)$,
\[
    \sqrt{\alpha_q}(1-q)^{-N/2}\|\zeta\|
    \leq\|l_q(\eta)^N\zeta\|
    \leq\sqrt{\beta_q}(1-q)^{-N/2}\|\zeta\|.
\]
Combining this in the case where $\zeta\in\mathcal H_{\mathbb R}$ with $\zeta\perp\eta$ with  $l_q^\ast(\zeta)l_q(\eta)^N=q^Nl_q(\eta)^Nl_q^\ast(\zeta)$
yields
\begin{equation}\label{eq:q-perpendicular-annihilator}
    \|l_q^\ast(\zeta)p_{N,\eta}\|
    \leq C_q\|l_q(\zeta)\|\,|q|^N,
\end{equation}
where $C_q:=\sqrt{\beta_q/\alpha_q}$.\\

\emph{About (i)}: It suffices to prove the first assertion for elements of the form $a=W_q(\zeta_1\otimes\cdots\otimes\zeta_d)$ with $\zeta_1,\ldots,\zeta_d\in\mathcal H_{\mathbb R}$. The multilinearity of $W_q$ allows to expand $a$ as a finite linear combination of Wick operators whose inputs are either $\eta$ or perpendicular to $\eta$.
By \eqref{eq:q-expectation-wick}, the summand whose inputs all equal $\eta$ is precisely $\mathbb E_\eta(a)$.
Consequently, without loss of generality, we may assume that $\zeta_1,\ldots,\zeta_d\in \{\eta\} \cup (\mathcal{H}_\mathbb{R}\ominus \mathbb{R}\eta)$ with $\zeta_i \perp \eta$ for at least one integer $1\leq i\leq d$. An application of \cite[Proposition~2.7]{BKS97} then allows to express $a$ as a finite linear combination of products of the form $X:=\bigl(l_q(\xi_1)\cdots l_q(\xi_m)\bigl) \bigl(l_q^\ast(\eta_1)\cdots l_q^\ast(\eta_n)\bigl)$, where each input index $\zeta_1,\ldots,\zeta_d$ occurs exactly once.

By our assumptions, the product $X$ contains a creation operator or an annihilator operator whose input is perpendicular to $\eta$. We distinguish two cases:\\

\begin{itemize}
\item \emph{Case 1}: Suppose first that the annihilation part of $X$ contains an operator whose input is perpendicular to $\eta$, and choose the rightmost annihilator with this property. Every factor to its right then equals $l_q^\ast(\eta)$, so that $X$ is of the form $X=Yl_q^\ast(\zeta)l_q^\ast(\eta)^r$ for suitable elements $\zeta\in\mathcal H_{\mathbb R}$ with $\zeta\perp\eta$ and $0\leq r\leq d-1$, where $Y$ is the product of the remaining factors. Using the identification of $\mathcal F_q(\mathcal H)$ and $\ell^2\otimes\ker(l_q^*(\eta))$ in Lemma~\ref{lem:oscillator}, one checks that $l_q     ^\ast(\eta)^rp_{N,\eta}= p_{N-r,\eta}l_q^\ast(\eta)^rp_{N,\eta}$ for $N>d$. Using \eqref{eq:q-perpendicular-annihilator}, we obtain
\begin{align*}
    \|p_{N,\eta}Xp_{N,\eta}\|
    &\leq
    \|Y\|\,\|l_q^\ast(\zeta)p_{N-r,\eta}\|\,
    \|l_q(\eta)\|^r\\
    &\leq
    C_q\|Y\|\,\|l_q(\zeta)\|\,
    \|l_q(\eta)\|^r\,|q|^{N-r}
    \longrightarrow0.
\end{align*}

\item  \emph{Case 2}: Now assume that the annihilation part of $X$ contains no perpendicular input, so that its creation part must have one.
By applying the preceding argument to the adjoint $X^\ast$, we obtain $\|p_{N,\eta}Xp_{N,\eta}\|=\|p_{N,\eta}X^*p_{N,\eta}\|\rightarrow0$.
\end{itemize}

\vspace{1mm}

Combined, the cases above establish that $\|p_{N,\eta}(a-\mathbb E_\eta(a))p_{N,\eta}\| \rightarrow 0$, as claimed.\\

\emph{About (ii)}: By Lemma~\ref{lem:oscillator}, we have $V_\xi^*uV_\xi\in \mathcal B(\ell^2) \otimes\mathbb C1 \subseteq \mathcal{B}(\ell^2 \otimes \ker(l_q^\ast (\xi)))$. Similarly, $V_\xi^\ast p_{1,\xi}^{\perp} V_\xi$ and $V_\xi^\ast p_{N,\xi}^\perp V_\xi$ correspond to the projections onto the first coordinate and the first $N$ coordinates of the factor $\ell^2$ respectively, so that
\begin{equation}\label{eq:q-oscillatory-compression}
    \|p_{N,\xi}^\perp u^np_{1,\xi}^{\perp}\|^2
    =
    \sum_{r=0}^{N-1}
    \frac{|\langle \xi^{\otimes r},u^n\Omega\rangle_q|^2}
         {[r]_q!}
\end{equation}
for every $N\in \mathbb{N}_{\geq 1}$, where $\xi^{\otimes0}:=\Omega$. By \cite[Proposition~2.9]{BKS97}, each
Wick operator $W_q(\xi^{\otimes r})$ can be expressed as a polynomial in $s_q(\xi)$. Since the vacuum distribution of $s_q(\xi)$ is absolutely
continuous by \cite[Theorem~1.10]{BKS97}, the
Riemann--Lebesgue lemma gives that $\langle \xi^{\otimes r},u^n\Omega\rangle_q = \tau\bigl(W_q(\xi^{\otimes r})^*u^n\bigr) \rightarrow0$ for every $r\in \mathbb{N}$. With \eqref{eq:q-oscillatory-compression} it follows that $ \|p_{N,\xi}^\perp u^np_{1,\xi}^{\perp}\| \rightarrow 0$ as $n\rightarrow \infty$.

The orthogonality of $\xi$ and $\eta$ gives
\[
    \overline{\Cs(s_q(\eta))\Omega}
    =
    \overline{\operatorname{span}}
    \{\eta^{\otimes r}\mid r\in\mathbb N\}
    \subseteq
    \ker(l_q^\ast(\xi))
    =
    \im(p_{1,\xi}^{\perp}).
\]
By trace preservation and bimodularity, the compression
of an element $x\in\mathcal A_q(\mathcal H_{\mathbb R})$
to this cyclic subspace agrees with the restriction of
$\mathbb E_\eta(x)$.
Since the representation of $\Cs(s_q(\eta))$ on this
subspace is faithful, it follows that $\|\mathbb E_\eta(x)\|
    \leq \|{p_{1,\xi}^\perp}x{p_{1,\xi}^\perp}\|$.

The formula in \eqref{eq:q-expectation-wick} implies that
$\mathbb E_\eta(\mathbb E_\xi(a))=\tau(a)1$.
Moreover, $u$ commutes with $\mathbb E_\xi(a)$, since both elements belong to the Abelian \Cs-algebra $\Cs(s_q(\xi))$.
Using the preceding compression inequality, we thereby obtain
\begin{eqnarray*}
 \|\mathbb E_\eta(u^{-n}au^n)-\tau(a)1\| &=& \left\Vert \mathbb E_\eta(u^{-n}(a-\mathbb E_\xi(a))u^n) \right\Vert \\
&\leq& \left\Vert {p_{1,\xi}^\perp}u^{-n}(a-\mathbb E_\xi(a))u^n{p_{1,\xi}^\perp} \right\Vert \\
&\leq& \left\Vert p_{N,\xi}(a-\mathbb E_\xi(a))p_{N,\xi}\right\Vert  +2\|a-\mathbb E_\xi(a)\|\,\|p_{N,\xi}^\perp u^n{p_{1,\xi}^\perp}\|.
\end{eqnarray*}
Employing the limit above together with the statement in (1) with $\xi$ in place of $\eta$ finishes the proof.
\end{proof}

We are now ready to prove the main result of the present section, which shows selflessness for $q$-Gaussian \Cs-algebras. This gives a positive answer to the question asked in Problem 12 of \cite{OWR25}.

\begin{theorem}\label{thm:q-gaussian-selfless}
Let $-1<q<1$ and let $\mathcal{H}_{\mathbb R}$ be a real Hilbert space with $\dim(\mathcal{H}_{\mathbb R})\geq2$. Then, for any two orthogonal unit vectors $\xi,\eta \in \mathcal{H}_\mathbb{R}$, the inclusion
\[
\Cs(s_q(\xi),s_q(\eta))\subseteq (\mathcal A_q(\mathcal{H}_{\mathbb R}),\tau)
\]
is completely selfless.
\end{theorem}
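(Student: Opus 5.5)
The plan is to verify assertion (ii) of Theorem~\ref{AbstractCriterion-completely-selfless} for the inclusion $B:=\Cs(s_q(\xi),s_q(\eta))\subseteq(\mathcal A_q(\mathcal H_{\mathbb R}),\tau)$. We imitate the proof of Theorem~\ref{thm:amalgamated-free-product}, with the conjugated projections $v_ip_0v_i^*$ replaced by $u^np_{N,\eta}u^{-n}$, where $u:=\exp(is_q(\xi))\in B$. Work in the GNS representation on $\mathcal F:=\mathcal F_q(\mathcal H)$ with $\xi_\tau=\Omega$. Take $(C,\gamma):=(\Cs(s_q(\eta)),\tau|)$. This is non-trivial and tracially faithful, and by \cite[Theorem~1.10]{BKS97} it is isomorphic to a continuous function algebra with absolutely continuous law.

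First, for fixed $N$ we analyse $p_{N,\eta}^\perp$ against $C$. By Lemma~\ref{lem:oscillator}, under $V_\eta$ the algebra $C$ acts as $\mathcal B(\ell^2)$-type weighted-shift operators tensored with $1$ on $\ker(l_q^*(\eta))$. We would like $p^\perp c\,p^\perp=\gamma(c)p^\perp$, which is the analogue of the Toeplitz/vacuum compression. This fails exactly for finite $N$: $p_{N,\eta}^\perp$ is the projection onto the first $N$ levels, not onto the vacuum level.

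So I would instead place the roles the other way round. Let the $C$-side projection be $p^\perp:=p_{1,\eta}^\perp$-type, and use $p=p_{N,\eta}$ with $N\to\infty$ for the $A$-side. A cleaner choice is $p_k:=u^{n_k}p_{N_k,\eta}u^{-n_k}$ and $\theta_k(c):=u^{n_k}cu^{-n_k}$.

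To get the exact vacuum-type relation on $p^\perp$, replace $C$ by the Toeplitz mechanism. In the weighted-shift picture, the isometry $T_0:=V_\eta(S\otimes1)V_\eta^*$ has $T_0+T_0^*$ not in $C$. Hence I would rather aim directly at assertion (iii) via an approximate version: build $T_k:=u^{n_k}V_\eta(S^{N_k}\otimes 1)\cdots$.

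Here is the concrete plan I commit to. Verify (ii) with $q_k:=u^{n_k}p_{N_k,\eta}u^{-n_k}\otimes1$ and $\theta_k:=\mathrm{Ad}(u^{n_k})|_C$, and obtain the needed approximate identities:
\[
\|q_k(a\otimes1)q_k-\tau(a)q_k\|\to0,\qquad \|q_k(a\otimes1)(\Omega\otimes\eta')\|\to0,\qquad \|q_k^\perp(\theta_k(c)\otimes1)q_k^\perp-\gamma(c)q_k^\perp\|\to0 .
\]
In the ultrapower these become exact. The commutation with $1\otimes\mathcal B(\ell^2)$ is automatic.

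The first estimate is obtained by writing $u^{-n}au^n-\mathbb E_\eta(u^{-n}au^n)$ and applying Proposition~\ref{prop:two-estimates}(i). For this we need uniformity in $a$ over the set $\{u^{-n}au^n\}$, which is not provided directly. So I would choose $n_k$ first by Proposition~\ref{prop:two-estimates}(ii), which gives $\mathbb E_\eta(u^{-n}au^n)\approx\tau(a)$, and then choose $N_k$ depending on the finitely many elements $u^{-n_k}au^{n_k}$ under consideration. A diagonal argument over finite sets of $a$ then handles the rest.

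The second estimate follows from $\|p_{N,\eta}u^{-n}a\Omega\|\to0$ as $N\to\infty$, since $p_{N,\eta}\to0$ strongly.

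The third estimate is the main obstacle. Conjugating by $u^{n_k}$ reduces it to $p_{N,\eta}^\perp c\,p_{N,\eta}^\perp\approx\gamma(c)p_{N,\eta}^\perp$. This is false, since the compression to the first $N$ levels is not scalar. I expect to have to modify the $C$-side: replace $C$ by elements of $B^{\mathcal U}$ of the form $(u^{n_k}c_ku^{-n_k})$, with $c_k\in\Cs(s_q(\eta))$ chosen to implement a shift by $N_k$ levels. A candidate is a Haar-type unitary $w_k\in C$ that is approximately $V_\eta(\text{bilateral-like shift})V_\eta^*$ on the first $N_k$ levels, obtained via functional calculus and Riemann--Lebesgue as in \eqref{eq:q-oscillatory-compression}. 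The requirement is $p^\perp w_k^m p^\perp\approx0$ for $m\neq0$. Verifying that such $w_k$ exist with uniform control is where I expect the real work to lie.
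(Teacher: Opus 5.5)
Your architecture is the same as the paper's. You verify assertion (ii) of Theorem~\ref{AbstractCriterion-completely-selfless} with $p_k=u^{n_k}p_{N_k,\eta}u^{-n_k}\otimes1$. You choose $n_k$ first via Proposition~\ref{prop:two-estimates}(ii), and then $N_k$ via Proposition~\ref{prop:two-estimates}(i) together with strong convergence $p_{N,\eta}\to0$. The first two relations and the commutation with $1\otimes\mathcal B(\ell^2)$ come out exactly this way, and you correctly see that $\theta_k=\mathrm{Ad}(u^{n_k})|_{\Cs(s_q(\eta))}$ cannot give the third relation. However, the proposal stops exactly where the remaining content lies. You never construct the $C$-side elements, so $p^\perp\theta(c)p^\perp=\gamma(c)p^\perp$ is not established. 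The properties you ask of $w_k$ (approximately a shift on the first $N_k$ levels, uniform control) are more than is needed, and they are not proved. As written, this is a genuine gap.

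The missing step is short. Take $(C,\gamma)=(C(\mathbb T),\tau_{\mathrm{Leb}})$ and a Haar unitary $v\in\Cs(s_q(\eta))$; one exists because the law of $s_q(\eta)$ is atomless. Under $V_\eta$ from Lemma~\ref{lem:oscillator}, $\Cs(s_q(\eta))$ acts as $\mathcal B(\ell^2)\otimes1$, and $p_{N,\eta}^\perp$ is the projection onto the first $N$ levels tensored with $1$. Hence $\|p_{N,\eta}^\perp c\,p_{N,\eta}^\perp\|$ is the norm of the $N\times N$ matrix with entries $\langle\eta^{\otimes k},c\,\eta^{\otimes l}\rangle_q/\sqrt{[k]_q!\,[l]_q!}$. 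For $c=v^m$, traciality rewrites $\langle\eta^{\otimes k},v^m\eta^{\otimes l}\rangle_q$ as $\langle W_q(\eta^{\otimes k})W_q(\eta^{\otimes l})^*\Omega,v^m\Omega\rangle_q$. This tends to $0$ as $|m|\to\infty$ by Bessel's inequality, since $(v^m\Omega)_{m\in\mathbb Z}$ is orthonormal. Thus $\|p_{N,\eta}^\perp v^mp_{N,\eta}^\perp\|\to0$ for each fixed $N$, which is \eqref{CompressionLimit}.

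No uniformity in $N$ is needed, since only finitely many nonzero powers must be controlled at each stage. After fixing $N_k$, pick $m_k$ with $\|p_{N_k,\eta}^\perp v^{m_kr}p_{N_k,\eta}^\perp\|<1/k$ for $0<|r|\le k$, and set $\theta(f):=(u^{n_k}f(v^{m_k})u^{-n_k})_{k\to\mathcal U}\in B^{\mathcal U}$. Then $p^\perp\theta(f)p^\perp=\tau_{\mathrm{Leb}}(f)p^\perp$ holds for trigonometric polynomials, and hence for all $f\in C(\mathbb T)$. Moreover, $\theta$ is state-preserving because $\tau$ is tracial and each $v^{m_k}$ is Haar.

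Your Riemann--Lebesgue idea would also work. Take $w_k=\exp(it_ks_q(\eta))$ with $t_k\to\infty$; the same finite-matrix reduction plus absolute continuity of the law of $s_q(\eta)$ gives the compression estimates. The Haar property of $W=(u^{n_k}w_ku^{-n_k})_{k\to\mathcal U}$ then need not be arranged in advance. Indeed, $p^\perp W^rp^\perp=0$ for $r\neq0$, together with $p\,(\Omega\otimes\delta_0)^{\mathcal U}=0$, already forces $\tau^{\mathcal U}(W^r)=0$.
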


\begin{proof}
As before, we denote
$u:=\exp(is_q(\xi))$.
By \cite[Theorem~1.10]{BKS97}, the vacuum distribution
of $s_q(\eta)$ is atomless, so that \cite[Proposition 4.1(i)]{DHR97} (or \cite[Proposition 4.9]{Thi24}) provides a Haar unitary $v\in\Cs(s_q(\eta))$. The corresponding system $(v^m\Omega)_{m \in \mathbb{Z}} \subseteq \mathcal{F}_q(\mathcal{H})$ is orthonormal. Consequently, traciality implies that for all $r,s\in\mathbb N$,
\[
    \langle\eta^{\otimes r},v^m\eta^{\otimes s}\rangle_q
    =
    \bigl\langle
        W_q(\eta^{\otimes r})W_q(\eta^{\otimes s})^*
        \Omega,\,
        v^m\Omega
    \bigr\rangle_q
    \longrightarrow0
\]
as $|m| \rightarrow \infty$.

For $N\in \mathbb{N}_{\geq 1}$, every vector in $\im(p_{N,\eta}^\perp)$
has the form $\sum_{l=0}^{N-1} V_\eta(\delta_l\otimes\zeta_l)$ for suitable elements $\zeta_0,\ldots, \zeta_{N-1}\in\ker(l_q^*(\eta))$, where $V_\eta$ is defined as in Lemma \ref{lem:oscillator}. An explicit calculation gives that for $0\leq l\leq N-1$,
\[
    p_{N,\eta}^\perp v^mV_\eta(\delta_l\otimes\zeta_l)
    =
    \sum_{k=0}^{N-1}
   c_{k,l}^{(m)}\, V_\eta(\delta_k\otimes\zeta_l), \quad c_{k,l}^{(m)}:=\frac{\langle\eta^{\otimes k},v^{m}\eta^{\otimes l}\rangle_{q}}{\sqrt{[k]_{q}!\,[l]_{q}!}}
\]
so that unitarity of $V_\eta$ and the Cauchy--Schwarz inequality imply
\begin{align*}
    \left\|
        p_{N,\eta}^\perp v^mV_\eta
        \left(\sum_{s=0}^{N-1}
            \delta_s\otimes\zeta_s\right)
    \right\|^2
    &=
    \sum_{r=0}^{N-1}
    \left\|\sum_{k=0}^{N-1}
        c_{r,k}^{(m)}\zeta_k\right\|^2\\
    &\leq
    \left(\sum_{r,k=0}^{N-1}|c_{r,k}^{(m)}|^2\right)
    \left(\sum_{k=0}^{N-1}\|\zeta_k\|^2\right).
\end{align*}
Hence,
\begin{equation} \label{CompressionLimit}
    \|p_{N,\eta}^\perp v^mp_{N,\eta}^\perp\|
    \leq
    \left(
        \sum_{r,k=0}^{N-1}|c_{r,k}^{(m)}|^2
    \right)^{1/2}
    \longrightarrow0
\end{equation}
as $|m|\to\infty$ because every coefficient tends to zero.

Now consider the set
\[
I:=\left\{(F,k) \mid k\in \mathbb{N}_{\geq 1} \text{ and } 1\in F\subseteq \mathcal A_q(\mathcal{H}_{\mathbb R}) \text{ finite}\right\},
\]
ordered by inclusion in the
first coordinate and the usual order in the second. For a fixed element $i=(F,k) \in I$, by Proposition~\ref{prop:two-estimates}(ii), we may choose an integer $n_i\geq k$ with $\|\mathbb E_\eta(u^{-n_i}au^{n_i})-\tau(a)1\|<k^{-1}$ for all $a\in F$.
Furthermore, Proposition~\ref{prop:two-estimates}(i)
and the strong convergence of the sequence $(p_{N,\eta})_{N \in \mathbb{N}_{\geq 1}}$ to zero provide an integer $N_i\geq k$ with
\begin{eqnarray*}
\bigl\| p_{N_i,\eta} \bigl(u^{-n_i}au^{n_i} -\mathbb E_\eta(u^{-n_i}au^{n_i})\bigr) p_{N_i,\eta} \bigr\|<\frac1k,\quad
    \|p_{N_i,\eta}u^{-n_i}a\Omega\|
    &<\frac1k
\end{eqnarray*}
for every $a\in F$.
Finally, the limit in \eqref{CompressionLimit} provides
$m_i\geq k$ with
$\|{p_{N_i,\eta}^\perp}v^{m_i r}{p_{N_i,\eta}^\perp}\|<k^{-1}$ for $0 < |r| \leq k$.

Define $ p_i:=u^{n_i}p_{N_i,\eta}u^{-n_i} \in \mathcal{B}(\mathcal{F}_q(\mathcal{H}))$, so that $\|p_iap_i-\tau(a)p_i\|<2k^{-1}$ and $\|p_ia\Omega\|<k^{-1}$ for all $a\in F$. Choose a cofinal ultrafilter $\mathcal U$ on $I$, and put $p:=(p_i\otimes1)_{i\to\mathcal U}\in\mathcal B\bigl((\mathcal F_q(\mathcal H)\otimes\ell^2)^{\mathcal U}\bigr)$.
Furthermore, define a unital $\ast$-homomorphism
\[
\theta\colon C(\mathbb T)\longrightarrow \Cs(s_q(\xi),s_q(\eta))^{\mathcal U}, \quad \theta(f):= \bigl(u^{n_i}f(v^{m_i})u^{-n_i}\bigr)_{i\to\mathcal U}.
\]
Since $\tau$ is tracial and every $v^{m_i}$ with $i \in I$ is a Haar unitary, we have that $\tau^{\mathcal U}(\theta(f))=\leb(f)$ for $f\in C(\mathbb T)$, implying that $\theta$ is injective.

Representing $\mathcal A_q(\mathcal H_{\mathbb R})^{\mathcal U}$ canonically on
$(\mathcal F_q(\mathcal H)\otimes\ell^2)^{\mathcal U}$, identifying
$\mathcal A_q(\mathcal H_{\mathbb R})$ with its diagonal copy, and tensoring the preceding estimates with $1$, we obtain that $pap=\tau(a)p$ and $pa(\Omega\otimes\delta_0)^{\mathcal U}=0$ for every $a\in \mathcal A_q(\mathcal H_{\mathbb R})$, that $p^\perp\theta(f)p^\perp=\leb(f)p^\perp$ for every $f\in C(\mathbb T)$, and that $[p,(1\otimes d)_{i\to\mathcal U}]=0$ for all $d\in \mathcal{B}(\ell^2)$. Thus the faithful Fock representation with vector
$\Omega$, the unit vector $\delta_0\in\ell^2$,
the projection $p$, and the state-preserving embedding
$\theta$ satisfy the conditions of
Theorem~\ref{AbstractCriterion-completely-selfless} (ii).
Hence the inclusion $\Cs(s_q(\xi),s_q(\eta))\subseteq(\mathcal A_q(\mathcal H_{\mathbb R}),\tau)$ is completely selfless.
\end{proof}

\vspace{3mm}


\bibliographystyle{mybstnum}
\bibliography{references}

\end{document}